\numberwithin{equation}{section}
\newtheorem{theorem}{Theorem}[section]
\newtheorem{proposition}[theorem]{Proposition}
\newtheorem{lemma}[theorem]{Lemma}
\newtheorem{corollary}[theorem]{Corollary}
\theoremstyle{definition}
\newtheorem{definition}[theorem]{Definition}
\newtheorem{convention}[theorem]{Convention}
\newtheorem{example}[theorem]{Example}
\newtheorem{remark}[theorem]{Remark}
\newcommand\<{\langle}
\newcommand\CC{{\mathbb C}}
\newcommand\NN{{\mathbb N}}
\newcommand\QQ{{\mathbb Q}}
\newcommand\RR{{\mathbb R}}
\newcommand\ZZ{{\mathbb Z}}
\newcommand\PP{{\mathbb P}}
\newcommand\cE{{\mathcal E}}
\newcommand\sC{{\mathscr C}}
\newcommand\sL{{\mathscr L}}
\newcommand\cS{{\mathcal S}}
\newcommand\cU{{\mathcal U}}
\newcommand\scrU{{\mathscr U}}
\newcommand\cV{{\mathcal V}}
\newcommand\cW{{\mathcal W}}
\newcommand*{\defeq}{\mathrel{\vcenter{\baselineskip0.5ex \lineskiplimit0pt
                     \hbox{\scriptsize.}\hbox{\scriptsize.}}}%
                     =}
\newcommand*{\eqdef}{
=%
\mathrel{\vcenter{\baselineskip0.5ex \lineskiplimit0pt
                     \hbox{\scriptsize.}\hbox{\scriptsize.}}}%
                     }
\newcommand\del{\partial}
\newcommand{\Arg}{\operatorname{Arg}}
\newcommand\charVar{{\rm Char}}
\newcommand\GL{\mathrm{GL}}
\newcommand\gr{{\rm gr}}
\newcommand\New{{\rm New}}
\newcommand\rank{{\rm rank}}
\newcommand\Res{{\rm Res}}
\newcommand\ResArr{{\rm ResArr}}
\newcommand\Sol{{\rm Sol}}
\newcommand\Pol{{\rm Poles}}
\newcommand\Sing{{\rm Sing}}
\newcommand\vol{{\rm vol}}
\newcommand\Var{{\rm Var}}
\newcommand\ini{{\rm in}}
\newcommand\fin{{\rm fin}}
\newcommand\minus{\smallsetminus}
\renewcommand\>{\rangle}
\renewcommand\Re{{\rm Re}}
\def\endrk{\hfill$\hexagon$}
\def\fface{\left[\begin{smallmatrix}1\\0\end{smallmatrix}\right]}
\def\lface{\left[\begin{smallmatrix}1\\k\end{smallmatrix}\right]}
\def\4face{\left[\begin{smallmatrix}1\\4\end{smallmatrix}\right]}
\newsavebox{\smlmat}
\savebox{\smlmat}{$\left[\begin{smallmatrix}1&1&1\\0&2&3\end{smallmatrix}\right]$}
\newsavebox{\smlMat}
\savebox{\smlMat}{$\left[\begin{smallmatrix}1&1&1&1\\0&1&3&4\end{smallmatrix}\right]$}
\begin{document}
\vspace*{-9mm}
\title{Hypergeometric functions for projective toric curves}

\dedicatory{In memory of Andrei Zelevinsky}

\author{Christine Berkesch Zamaere}
\address{School of Mathematics, University of Minnesota, Minneapolis, MN 55455.}
\email{cberkesc@math.umn.edu}

\author{Jens~Forsg{\aa}rd}
\address{Department of Mathematics \\ Stockholm University \\
SE-106 91 Stockholm, Sweden.}
\email{jensf@math.su.se}

\author{Laura Felicia Matusevich}
\address{Department of Mathematics \\
Texas A\&M University \\ College Station, TX 77843.}
\email{laura@math.tamu.edu}

\thanks{
CBZ was partially supported by NSF Grants DMS 1440537 and OISE 0964985. 
JF was partially supported by the G. S. Magnusson Fund of the Royal Swedish Academy of Sciences. 
LFM was partially supported by NSF Grants DMS 0703866 and DMS 1001763, and a Sloan Research Fellowship.}
\subjclass[2010]{
Primary: 32A17, 33C70; 
Secondary: 14M25, 14F10
}
\begin{abstract}
We produce a decomposition of the parameter space of the
$A$-hypergeometric system associated to a projective monomial curve as
a union of an arrangement of lines and its complement, in such a way that the
analytic behavior of the solutions of the system is 
explicitly controlled within each term of the union.
\end{abstract}
\maketitle
\vspace{-6mm}
\section{Introduction}
\label{sec:intro}

The goal of this article is to understand the behavior, as the
parameters vary, of the solutions of the $A$-hypergeometric system
associated to a projective monomial curve.  
Our strongest result is Theorem~\ref{thm:bigSheaf}, which asserts the
existence of finitely many lines in the parameter space $\CC^2$,
outside of which the solutions of the corresponding hypergeometric
system vary locally analytically. Further, these hypergeometric
functions are also locally analytic separately along those lines, and the
behavior at line intersections is completely understood. In
particular, this provides
an intuitive picture for the formation of rank jumps in such a system.

One of the interesting features of  hypergeometric systems of differential
equations is
that changing the parameters can have significant effects. For
instance, special choices of parameters can result in the presence of
algebraic, rational or polynomial solutions. This has motivated the
study of hypergeometric functions as functions of the parameters:
the classical hypergeometric series and integrals, such as those named
after Euler, Gauss, Appell, and Lauricella, are known to be
meromorphic functions of the parameters.  
In the more general context of $A$-hypergeometric differential
equations, hypergeometric functions representable as Euler or
Euler--Mellin integrals share this property~\cite{GKZ90,NP}.  
In all of these cases, the parameters at which the generic solutions
have poles
yield hypergeometric systems that are difficult to solve explicitly 
and may present additional exceptional behavior. 
One such major challenge is that the dimension of the solution space
(called the \emph{holonomic rank}) of the hypergeometric system can
change as its parameters vary.  

While the holonomic rank of a hypergeometric system has been well
studied~\cite{MMW,berkesch}, the most general results in this
direction are obtained using algebraic tools that are completely
divorced from the solution spaces whose dimensions they compute.  
In order to completely understand hypergeometric functions as the
parameters vary, rank computations need to be realized in terms of
solutions.  

In general, it is fairly straightforward to
produce linearly independent $A$-hypergeometric functions whose
parametric behavior is 
well controlled, and which, for sufficiently generic parameters, span
the solution space of the corresponding system;  
this is done in Section~\ref{sec:PD+sheaf} using the classical
technique of parametric differentiation. However, dealing with rank
jumping parameters presents significant difficulties (see
Remark~\ref{rmk:higherDimensions}).  
For the remainder of this article, we thus focus on the special case of
$A$-hypergeometric systems arising from projective monomial curves, where
dedicated tools are available.  
Specifically, three facts come to our aid for such systems: 
they have only finitely many rank jumping parameters~\cite{CDD}; for
generic parameters, their Euler--Mellin integral solutions span the
solution space of the system~\cite{BFP}; and for all parameters,
the solutions of the system can be expressed as power series without
logarithms~\cite{SST, saito-logFree}.  
We deeply rely on the combinatorics of this specific situation to
refine the results about hypergeometric integrals and series in order
to explain  
how deformations of special solutions along certain subspaces of the
parameter space produce rank jumps.

Throughout this article, we assume that the toric varieties underlying
the $A$-hypergeometric systems under consideration are
projective, because in this case, integral representations are
available for $A$-hypergeometric functions. 
While affine toric varieties also give rise to
$A$-hypergeometric systems, the corresponding $A$-hypergeometric
integrals are not well understood; in order for our methods to apply
to the affine case,~\cite[Conjecture~5.4.4]{SST}, which provides a
basis of $A$-hypergeometric integrals in the generic affine case,
would have to be proved. Even for affine toric curves, which are
arithmetically Cohen--Macaulay, and therefore have no rank-jumping
parameters by~\cite{MMW}, this conjecture is still open. Consequently, our arguments do not carry over to this case.
It is the subject of ongoing work to produce analogs of our main
results for higher dimensional and affine toric varieties;
Remarks~\ref{rmk:EMobstacles}, ~\ref{rmk:higherDimensions}, 
and~\ref{rmk:seriesObstacles} address further difficulties in
pursuing this construction.  

Let $A$ be a $d\times n$ integer matrix such that $\ZZ A$, the group generated by the columns of $A$, is equal to $\ZZ^d$. 
We also require that $(1,\dots,1)\in\ZZ^n$ is in the $\QQ$-row span of
$A$. This yields a hypergeometric system that has regular singlarities~\cite{hotta}. 
Let $\CC^n$ have coordinates $x = (x_1,\dots,x_{n})$, and let $\del =
(\del_1,\dots,\del_{n})$, where $\del_i \defeq \del/\del x_i$.  
Let $D$ denote the \emph{Weyl algebra} over $\CC^n$, which is the free
associative algebra generated by the variables
$x,\del$ modulo the two-sided ideal   
\[
\left\< x_i x_j = x_j x_i,\ 
\del_i\del_j = \del_j\del_i, \ 
\del_ix_j = x_j\del_i +\delta_{ij}
\mid i,j\in\{1,\dots,n\} \right\>,
\] 
where $\delta_{ij}$ is the Kronecker $\delta$-function. 
Let 
\begin{equation}
\label{eqn:toric}
I_A\defeq \<\del^u-\del^v\mid Au = Av \> \subseteq \CC[\del_1,\dots,\del_n]
\end{equation}
be the \emph{toric ideal} arising from $A$, and consider the \emph{Euler operators} 
\[
E-\beta \defeq 
\left\{\sum_{j=1}^{n} a_{i,j} x_j\del_j - \beta_i\right\}_{i=1}^d\subseteq D.
\] 
The \emph{$A$-hypergeometric system} associated to $A$ and 
parameter $\beta\in\CC^d$ is the left $D$-ideal 
\begin{align*}
H_A(\beta) \defeq  
D\cdot 
\left(
I_A + \< E-\beta\>
\right).
\end{align*}

An $A$-hypergeometric system associated
to a projective toric curve is determined by a matrix 
\begin{equation}
\label{eqn:A}
A = \begin{bmatrix} 
1 & 1 & 1 & \cdots & 1 & 1\\
0 & k_2 & k_3 & \cdots & k_{n-1} & k
\end{bmatrix},
\end{equation}
where $0 = k_1 < k_2 <\cdots < k_{n-1} < k_{n} = k$, and $\gcd(k_2,\dots,k_{n-1},k)=1$.
The curve alluded to above is the one obtained by taking the Zariski
closure in $\PP_{\CC}^{n-1}$ of the image of the map
\[
(\CC^*)^2 \to (\CC^*)^n \quad \text{given by} \quad
(s,t) \mapsto (s , s t^{k_2},\dots,s t^{k_{n-1}}, s t^k) .
\]
The defining equations of this curve are~\eqref{eqn:toric}.
As an illustration of the particular nature of $H_A(\beta)$ when $A$ has the form~\eqref{eqn:A}, recall that for
$\beta = -
\left[\begin{smallmatrix}0\\1\end{smallmatrix}\right]$, the roots of
the polynomial 
\[
f(z) \defeq x_1+x_2z^{k_2}+\cdots+x_{n-1}z^{k_{n-1}}+x_nz^k,
\] 
considered as functions of $x_1,\dots,x_n$, are solutions
of $H_A(\beta)$~\cite{birkeland,mayr,sturmfels-solving}. 

The \emph{singular locus} $\Sigma_A\subseteq\CC^n$ 
of $H_A(\beta)$ is the hypersurface defined by the
principal $A$-determinant, also known as the full $A$-discriminant, see~\cite{GKZa}.
This locus is independent of $\beta$~\cite{gkz88,GKZ,BMW'}. When $A$ has the form~\eqref{eqn:A}, 
$\Sigma_A$ is the union of $\Var(x_1x_n)$ and the hypersurface defined by
the \emph{sparse discriminant} or \emph{$A$-discriminant}, 
which is the irreducible polynomial in $x$ that vanishes whenever $f(z)$ has a repeated root in $\CC^*$.

A \emph{solution} $\Phi$ of $H_A(\beta)$ is a multivalued, locally
analytic function, defined in a neighborhood of a fixed nonsingular point in $\CC^n$, such that $P\bullet \Phi = 0$ for each $P\in H_A(\beta)$. 
We consider functions $\Phi = \Phi(x, \beta)$
that provide solutions of $H_A(\beta)$ for each $\beta$ in some domain.
The solutions of an $A$-hypergeometric system can be represented as series~\cite{GKZ,CDD,SST}, Euler-type integrals~\cite{GKZ90}, or
Barnes-type integrals~\cite{N,B}, with each representation having different
advantages and drawbacks. In this article, we make use of both \emph{extended
  Euler--Mellin integrals}~\cite{NP,BFP} and 
\emph{series solutions}. While the former are entire in the
parameters, 
they do not span the solution space of $H_A(\beta)$ for all $\beta$
(see, for instance, Theorem~\ref{thm:genericResPolar}).  
On the other hand, the solution space of $H_A(\beta)$ can always be spanned by series, but the
domains of convergence in $x$ of these series 
depend not only on the parameters, but also on the monomial ordering
used to perform the series expansion~\cite[Theorem~2.5.16]{SST}.

The combinatorics of the columns of $A$ play a role in our results. 
A \emph{face} $F$ of $A$, is a subset of the column set of $A$ corresponding to a face of the real positive cone $\RR_{\geq 0}A$ over the columns of $A$. 
For $A$ as in \eqref{eqn:A}, 
the faces of $A$ are $A$, $\left\{\fface\right\}$, $\left\{\lface\right\}$, and $\varnothing$.

\begin{definition}
\label{def:resonant}
Let $F$ be a proper face of $A$. 
A parameter $\beta\in\CC^d$ is said to be \emph{resonant with respect to $F$} if $\beta\in\ZZ^d+\CC F$. 
Further, $\beta$ is \emph{resonant} if it is resonant with respect to some proper face of $A$. 
Let $\ResArr(A)$ denote the set of resonant parameters for $A$, so that 
\begin{equation}
\label{eqn:ResArr}
\ResArr(A) = \bigcup_{F\precneqq A} \ZZ^d+\CC F.
\end{equation}
\end{definition}

Resonance is linked to the behavior of the \emph{rank} of $H_A(\beta)$, that is, the
dimension of its solution space.
If $\beta \notin
\ResArr(A)$, then by~\cite{GGZ,GKZ,adolphson},
$\rank(H_A(\beta))=\vol(A)$, the \emph{normalized} volume of $A$. (For $A$ as
in~\eqref{eqn:A}, $\vol(A)=k$.)
Let 
\[
\cE_A \defeq \{\beta\in\CC^2\mid \rank\, H_A(\beta)>\vol(A)\}
\] 
denote the set of \emph{rank-jumping} parameters of $H_A(\beta)$. 
In the case of projective toric curves, the groundbreaking article~\cite{CDD} carried out a series of difficult
computations to show that  
\begin{equation}
\label{eqn:EAforCurves}
\cE_A = [ (\NN A + \ZZ\fface) \cap (\NN A + \ZZ \lface) ] \minus \NN A
\end{equation}
and that $\rank(H_A(\beta)) = k+1$ for $\beta \in \cE_A$. 
The solutions computed in \cite{CDD} are specific to integer parameters. 
An alternative proof of these facts, containing ideas that are crucial to 
Section~\ref{sec:SeriesSolutions}, can be found in~\cite[Section~4.2]{SST}. 

In the case of projective toric curves, while $\cE_A$ consists only of finitely many points, 
the fact that the dimension of the solution space of $H_A(\beta)$ is
constant almost everywhere does not mean that the solutions themselves cannot exhibit
different behaviors on $\CC^2 \minus \cE_A$.
These differences are already evident upon examination of the
solutions computed in~\cite{CDD}. 
Understanding the parametric variation of solutions of $H_A(\beta)$ is the 
motivation for our main results. 

For $A$ as in~\eqref{eqn:A}, let $\bar{\beta} \in \CC^2$, and let $L \subset \CC^2$ be a line
containing $\bar{\beta}$. We will say that a solution $\varphi(x)$ of
$H_A(\bar{\beta})$ which is locally analytic on $\CC^n\minus \Sigma_A$
is \emph{analytically deformable to a solution of $H_A(\beta)$ along
  $L$} if there exists a function $\Phi(x,\beta)$ that is locally analytic on
$(\CC^n \minus \Sigma_A) \times L$ such that $\Phi(x,\beta)$ is a
solution of $H_A(\beta)$ for all $\beta \in L$, and also such that the
specialization of $\Phi(x,\beta)$ to $\beta=\bar{\beta}$ is $\varphi(x)$.

\begin{theorem}
\label{thm:main}
The following assertions hold for $A$ as in~\eqref{eqn:A}.
\begin{enumerate}[leftmargin=*]
\item The solutions of $H_A(\beta)$ are locally analytic functions of
  $x$ and $\beta$ on $(\CC^n \minus \Sigma_A) \times (\CC^2\minus \ResArr(A))$. 
\item If $L$ is a line contained in $\ResArr(A)$, then the solutions of
  $H_A(\beta)$ are locally analytic functions of 
  $x$ and $\beta$ on $(\CC^n \minus \Sigma_A) \times L$.
\item Let $\bar{\beta} \in \ResArr(A)$ be the intersection of two
  lines $L_1$ and $L_2$ contained in $\ResArr(A)$.
\begin{enumerate}
\item Every solution of $H_A(\bar{\beta})$ is analytically deformable
  to a solution of $H_A(\beta)$ both along $L_1$ and along $L_2$ if and
  only if $\bar{\beta} \notin \cE_A$. 
\item If $\bar{\beta} \in \cE_A$, then the solution space of
  $H_A(\bar{\beta})$ can be decomposed as a direct sum of three
  subspaces. The first one is of dimension $1$ and consists of the functions
  that are only analytically deformable to solutions of $H_A(\beta)$ along $L_1$. The second one also
  is of dimension $1$ and consists of the functions that are only analytically
  deformable to solutions of $H_A(\beta)$ along $L_2$. The
  final one is of dimension $\vol(A)-1$ and consists of the functions that are
  analytically deformable to solutions of $H_A(\beta)$ both along $L_1$ and
  along $L_2$.
\end{enumerate}
\end{enumerate}
\end{theorem}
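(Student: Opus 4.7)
The strategy is to combine two complementary families of solutions already present in the paper: the extended Euler--Mellin integrals of~\cite{NP,BFP}, entire in $\beta$ and locally analytic in $x$ on $\CC^n\minus\Sigma_A$, and the logarithm-free series solutions obtained by parametric differentiation (Section~\ref{sec:PD+sheaf}), available for all $\beta$ in the projective toric curve case by~\cite{SST,saito-logFree}. For part~(1) this is immediate: the rank of $H_A(\beta)$ equals $\vol(A)$ for every $\beta\in\CC^2\minus\ResArr(A)$ by~\cite{GGZ,GKZ,adolphson}, and~\cite{BFP} shows that for such generic $\beta$ the Euler--Mellin integrals span the solution space, so their entire dependence in $\beta$ ensures linear independence throughout the non-resonant locus and yields joint local analyticity of a complete basis on $(\CC^n\minus\Sigma_A)\times(\CC^2\minus\ResArr(A))$.

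For part~(2), fix a line $L=\bar\beta+\CC F\subset\ResArr(A)$ associated to a proper face $F$. Along $L$ the Euler--Mellin integrals remain analytic but may become linearly dependent and fail to span (cf.~Theorem~\ref{thm:genericResPolar}); the deficit equals $\vol(A)-\rank H_A(\beta)$ at generic $\beta\in L$. The plan is to supplement them with the face-$F$ parametric-differentiation series of Section~\ref{sec:PD+sheaf}. Since those series are logarithm-free and their domains of convergence are dictated by the face combinatorics of $A$, they depend locally analytically on $(x,\beta)\in(\CC^n\minus\Sigma_A)\times L$ and, together with the integrals, produce a full-rank basis at a generic point of $L$; constancy of generic rank along $L$ then propagates linear independence to every $\beta\in L$.

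For part~(3), let $\bar\beta=L_1\cap L_2$ with $L_i\subset\ResArr(A)$. Part~(2) applied to each line yields subspaces $V_1,V_2\subseteq\Sol(H_A(\bar\beta))$, each of dimension $\vol(A)$, consisting of solutions deformable along the respective line. If $\bar\beta\notin\cE_A$, then $\rank H_A(\bar\beta)=\vol(A)$, forcing $V_1=V_2=\Sol(H_A(\bar\beta))$, which proves~(3a). If $\bar\beta\in\cE_A$, then~\eqref{eqn:EAforCurves} gives $\rank H_A(\bar\beta)=\vol(A)+1$, so $V_1$ and $V_2$ have codimension $1$, and~(3b) reduces to the single equality $\dim(V_1\cap V_2)=\vol(A)-1$: choosing any complement $W_i$ of $V_1\cap V_2$ in $V_i$ yields the required decomposition $\Sol(H_A(\bar\beta))=(V_1\cap V_2)\oplus W_1\oplus W_2$.

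The hard part is establishing this equality, equivalently $V_1\ne V_2$. By~\eqref{eqn:EAforCurves} a point of $\cE_A$ lies in both $\NN A+\ZZ\fface$ and $\NN A+\ZZ\lface$, so $L_1$ and $L_2$ necessarily point in the two distinct proper face directions of $A$; I expect that the $(\vol(A)+1)$-th solution at $\bar\beta$ splits as a sum of two distinguished logarithm-free series, one supported on a $\ZZ\fface$-shift of $\NN A$ and one on a $\ZZ\lface$-shift, each entering the parametric-differentiation basis of part~(2) for exactly one of the two lines and thereby witnessing that $W_1,W_2\ne 0$. The delicate step is to rule out any function that is simultaneously $L_1$- and $L_2$-deformable yet contributes to the rank jump: such a function would, by combining the two separately analytic extensions along $L_1\cup L_2$, propagate to $\vol(A)+1$ independent solutions at nearby non-resonant points where the rank is only $\vol(A)$, a contradiction, giving $\dim(V_1\cap V_2)\le\vol(A)-1$. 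I anticipate that the bulk of the technical work will be in making this simultaneous-deformation argument rigorous using the explicit series constructions from Section~\ref{sec:PD+sheaf} and the logarithm-free structure of solutions at $\cE_A$.
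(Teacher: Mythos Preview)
Your overall architecture for parts~(1), (2), and~(3a) is correct and matches the paper, but you have misidentified the technical tools needed. You repeatedly invoke the parametric-differentiation machinery of Section~\ref{sec:PD+sheaf} as the source of supplementary solutions along resonant lines and at rank-jumping points. The paper itself explicitly states that this machinery is too weak: Corollary~\ref{cor:sheaf} is the best one can get from parametric derivatives alone, and the paragraph following it says this ``does not involve the formation of rank jumps.'' The paper instead proves part~(2) via Theorem~\ref{thm:nonResonantLinesViaSeries}, which builds $\vol(A)$ series solutions along each resonant line directly from the top-dimensional standard pairs of a reverse lexicographic initial ideal of $I_A$; these are not parametric derivatives of Euler--Mellin integrals. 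Similarly, the decisive input for~(3b) is Theorem~\ref{thm:seriesLinIndep}, which produces $\vol(A)-1$ series that are analytic on a full two-dimensional neighborhood $\cW$ of $\bar\beta$ and whose span \emph{provably excludes every finitely supported series}. Combined with Theorem~\ref{thm:nongenericResPolarEM} (the two finitely supported Euler--Mellin restrictions along $L_1$ and $L_2$ are linearly independent at $\bar\beta\in\cE_A$), this is what forces $V_1\neq V_2$.

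Your proposed contradiction argument for $\dim(V_1\cap V_2)\le\vol(A)-1$ is also flawed. Deformability along $L_1$ and $L_2$ only gives analytic families on the resonant set $L_1\cup L_2$, not on a two-dimensional neighborhood; there is no mechanism to ``propagate to nearby non-resonant points,'' and the appearance of $\vol(A)+1$ in that sentence is unjustified. The correct argument is the one the paper gives implicitly: $\dim V_i\le\vol(A)$ since generic rank along $L_i$ is $\vol(A)$; both $V_i$ contain the $\vol(A)-1$ series of Theorem~\ref{thm:seriesLinIndep}; and if $V_1=V_2$ then that common space would contain those $\vol(A)-1$ series together with \emph{both} linearly independent finitely supported solutions (Theorem~\ref{thm:nongenericResPolarEM}), contradicting $\dim V_1=\vol(A)$ because Theorem~\ref{thm:seriesLinIndep}(4) ensures neither finitely supported solution lies in the span of the $\vol(A)-1$ series. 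Your plan cannot be completed without results equivalent to Theorems~\ref{thm:nongenericResPolarEM} and~\ref{thm:seriesLinIndep}.
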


In Theorem~\ref{thm:main}, $\ResArr(A)$ can be replaced by a union of
finitely many resonant lines, as follows.

\begin{theorem}
\label{thm:bigSheaf}
Let $A$ be as in~\eqref{eqn:A}. There exists a set $\sL\subseteq \CC^2$
which is a union of finitely many (polar, see~\eqref{eqn:polar})
resonant lines, and which includes all resonant lines meeting $\cE_A$,
such that every statement in 
Theorem~\ref{thm:main} is still valid when $\ResArr(A)$ is replaced by $\sL$.
\end{theorem}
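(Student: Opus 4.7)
The plan is to trace through the proof of Theorem~\ref{thm:main} and extract the finite subset of $\ResArr(A)$ that it actually uses. The infinite arrangement $\ResArr(A)$ enters the picture only because the basis of solutions constructed by parametric differentiation of extended Euler--Mellin integrals in Section~\ref{sec:PD+sheaf} can acquire poles along certain resonant lines (these are the \emph{polar} lines, in the sense of \eqref{eqn:polar}). Outside this polar locus, the same basis extends analytically and spans the solution space, so infinitely many resonant lines play no role.

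First I would identify a finite set $\sL_0$ of polar resonant lines as follows: each of the $\vol(A)=k$ parametric-derivative solutions $\Phi_i(x,\beta)$ from Section~\ref{sec:PD+sheaf} is meromorphic in $\beta$, with pole locus contained in a finite union of resonant lines determined by the Gamma-type factors in the explicit integral formulas; take $\sL_0$ to be the union of these polar lines over $i=1,\dots,k$. Then set $\sL \defeq \sL_0 \cup \sL_1$, where $\sL_1$ is the union of all resonant lines that meet $\cE_A$. Since $\cE_A$ is finite by~\cite{CDD} and each $\bar\beta\in\cE_A$ lies on exactly two resonant lines (one of each slope dictated by \eqref{eqn:A}), $\sL_1$ is a finite union of lines, so $\sL$ is a finite union of resonant lines containing every resonant line that meets $\cE_A$.

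With this $\sL$ in place, the three parts of Theorem~\ref{thm:main} transfer directly. For part (1), on $(\CC^n\minus\Sigma_A)\times(\CC^2\minus\sL)$ the functions $\Phi_1,\dots,\Phi_k$ are pole-free and span the solution space by the same argument used for $\CC^2\minus\ResArr(A)$. For part (2), on a line $L\subseteq\sL$ any $\Phi_i$ exhibiting polar behavior along $L$ is replaced by an appropriate residue, or equivalently by a series-based analog from Section~\ref{sec:SeriesSolutions}, to produce $k$ independent solutions varying analytically along $L$; this argument depends only on $L$ being a single resonant line, not on what surrounds it. For part (3), at an intersection $\bar\beta=L_1\cap L_2$ with $L_1,L_2\subseteq\sL$, the dichotomy $\bar\beta\in\cE_A$ vs.\ $\bar\beta\notin\cE_A$ is resolved exactly as in Theorem~\ref{thm:main}(3), and the inclusion $\sL_1\subseteq\sL$ ensures case (3)(b) is fully covered.

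The main obstacle is the first step: producing a combinatorial description of the polar resonant lines of each $\Phi_i$ precise enough to confirm finiteness. This reduces to careful bookkeeping of the resonance conditions inherited from the proper faces of $A$ via the Gamma-factor poles in the Euler--Mellin representation, together with verifying that parametric differentiation does not introduce new polar directions outside this finite collection.
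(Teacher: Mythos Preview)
Your approach has a genuine gap at precisely the point you flag as ``bookkeeping.'' The pole locus of the Euler--Mellin integrals is \emph{not} finite: by~\eqref{eqn:polar}, $\Pol(A)$ is indexed by the numerical semigroups $G_0$ and $G_k$ of~\eqref{eq:numSemigroups}, which are infinite. Moreover, the extended integrals $\Psi_f^\Theta$ are entire in $\beta$ (Theorem~\ref{thm:BFP:EMconverge}), so the issue is not poles but linear dependence: Theorem~\ref{thm:genericResPolar} shows that along \emph{every} polar line all the $\Psi_f^\Theta$ collapse to a single finitely supported series. Thus parametric differentiation is needed along infinitely many lines, and your $\sL_0$ cannot be finite. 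The paper itself acknowledges (Remark~\ref{rmk:linIndep?}) that reducing to $\Pol(A)$ via integrals is open, and $\Pol(A)$ would still be infinite.

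The paper's proof takes a different route entirely, using the series solutions of Section~\ref{sec:SeriesSolutions} rather than Euler--Mellin integrals. The key observation is that for a reverse lexicographic order with $\del_1$ lowest, the top-dimensional standard pairs of $\ini_\prec(I_A)$ yield $\vol(A)$ series which, by Theorem~\ref{thm:analyticSeries} and Remark~\ref{rmk:extendingConvergenceDomain}, are analytic across any resonant line $L$ parallel to $\fface$ \emph{provided} no non-top-dimensional standard pair contributes a fake exponent along $L$. Since there are only finitely many non-top-dimensional standard pairs, this condition fails on only finitely many such lines. Repeating with $\del_n$ lowest handles the other pencil. Finiteness thus comes from the combinatorics of standard pairs, not from any property of the integral representations.
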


\subsection*{Outline}
We review necessary facts about extended Euler--Mellin integrals in
Section~\ref{sec:extend}.  
In Section~\ref{sec:PD+sheaf}, 
we use parametric derivatives and known facts about $A$-hypergeometric
integrals to provide a
less refined version of Theorems~\ref{thm:main}
and~\ref{thm:bigSheaf}, which is valid for any $A$-hypergeometric
system (arising from a projective toric variety) that has finitely
many rank jumps; this is 
Theorem~\ref{thm:SolutionsAnalyticStratification}. 
The remainder of this article provides an in-depth study of the
parametric variation of 
$A$-hypergeometric functions associated to projective toric curves.
Sections~\ref{sec:EM:genericResonant}
and~\ref{sec:EM:nongenericResonant} study
the behavior of the extended 
Euler--Mellin integrals. 
A careful analysis of the parametric 
convergence of $A$-hypergeometric series appears in Section~\ref{sec:SeriesSolutions}. 
Theorems~\ref{thm:main} and~\ref{thm:bigSheaf} are then proven in
Section~\ref{sec:sheafProof}, where we also
revisit the link between the solutions at rank-jumping
parameters and a certain local cohomology module.   
\endrk

\subsection*{Acknowledgements}%
We thank Alicia Dickenstein, Pavel Kurasov, Ezra Miller, Timur
Sadykov, and Uli Walther for fruitful conversations related to this
work.  We are grateful to the anonymous referee, whose thoughtful
comments and suggestions have improved this article.
The idea that extended Euler--Mellin integrals could be used to show
that hypergeometric functions vary nicely with the parameters  
was a product of discussions with Mikael Passare at the Institut
Mittag-Leffler in 2011. We wish that this could have been a joint
project with him.  
\endrk


\section{Extended Euler--Mellin integrals}
\label{sec:extend}

In this section, we introduce a tool for computing solutions of $H_A(\beta)$, extended Euler--Mellin integrals. We also recall that these integrals are linearly independent for very general $\beta\in\CC^2$. 

Given a polynomial $f\in\CC[z]$ with $f(0)\neq 0$, its \emph{coamoeba} $\sC(f)$ is the
image of $\Var(f)\subseteq \CC^*$ under the argument map $\arg\colon \CC^*\to\RR/2\pi\ZZ$. 
For a univariate polynomial this is a finite set, and hence closed, in contrast to the general situation.
The \emph{Euler--Mellin integral} associated to a connected component
$\Theta$ of the complement of the (closure of the) coamoeba $\sC(f)$ is
\begin{equation}
\label{eq:EMintegral}
M_f^\Theta(x,\beta) = \int_{\Arg^{-1}(\theta)} 
{f(z)^{\beta_1}}{z^{-\beta_2}}\,
\,\frac{dz}{z},
\end{equation}
where $\theta\in \Theta$ is arbitrary.

In this article, we are interested in the polynomial $f(z) = \sum_{i=1}^{n} x_i z^{k_i}$ for a generic point $x \in \CC^n$. 
The Newton polytope $\New(f)$ has two facets, $\{0\}$ and $\{k\}$, with corresponding (outward) normal vectors $\mu_0 = -1$ and $\mu_k = 1$. It thus admits a representation as an intersection of halfspaces
\[
\New(f) = \{y\in\RR \mid \mu_0 y \leq 0 
,\ \mu_k y -k \leq 0\} 
= \{y\in\RR \mid \gamma_j \cdot (1,y) \leq 0 \text{ for } j \in\{ 0, k\}\},
\]
where $\gamma_0 \defeq (0,-1)$ and $\gamma_k \defeq (-k,1)$. 
It was shown in~\cite[Theorem 2.3]{BFP} that the integral~\eqref{eq:EMintegral} converges in the tubular domain
\begin{equation}
\label{eqn:ConvergenceDomain}
\left\{\beta\in \CC^2 \mid \Re(\gamma_0\cdot \beta) >0 \,\text{ and }\, \Re(\gamma_k\cdot \beta) >0\right\}.
\end{equation}

Viewed as a function of the parameter $\beta$, the function $M_f^\Theta(x,\beta)$ can be  meromorphically extended to $\CC^2$; this is the essence of Hadamard's \emph{partie finie}~\cite{Had}, as understood by Riesz~\cite{Rie}. 
The explicit description of the process under which the meromorphic extension is obtained,
as provided by~\cite{BFP}, makes use of combinatorial information that
is crucial for our description of the behavior of extended Euler--Mellin integrals. 

In~\cite[Theorem 2.5]{BFP} it is stated that, if $\beta$ is contained in \eqref{eqn:ConvergenceDomain}, then
\begin{equation}
\label{eqn:PhiExtension}
M_f^\Theta(x,\beta) = \frac{\Phi_f^\Theta(x, \beta)}{\Gamma(-\beta_1)}\, \Gamma(-\gamma_0\cdot\beta)\,\Gamma(-\gamma_k\cdot\beta)
\end{equation}
where $\Phi$ is entire in $\beta$. Hence, the right hand side of \eqref{eqn:PhiExtension} provides a meromorphic extension of $M_f^\Theta$ in $\beta$. However, to give the most natural formulations of several results in this article, we use the more explicit version of $\Phi$ whose construction is outlined in~\cite[Remarks 2.6--7]{BFP}. 
This makes use of the monoids 
\begin{equation}
\label{eq:numSemigroups}
G_0  = \NN \, \{ k-k_{n-1}, \dots, k-k_2, k \}
\quad \text{and} \quad 
G_k  = \NN \, \{ k_2, \dots, k_{n-1}, k \}
\end{equation}
consisting of nonnegative integer combinations of the listed positive integers.

\begin{theorem}{\cite[Theorem~4.2]{BFP}}
\label{thm:BFP:EMconverge}
For each $x\in (\CC^n\minus\Sigma_A)$ and  each component $\Theta$ of the complement of $\sC(f)$, there
exists a function $\Psi_f^\Theta(x, \beta)$ which is entire in $\beta$ and locally analytic at
$(x,\beta)\in (\CC^n\minus\Sigma_A)\times\CC^2$, such that  
\begin{align}
\label{eq:extendedEM}
M^\Theta_f(x,\beta) = \frac{\Psi_f^\Theta(x, \beta)}{\Gamma(-\beta_1)}\, \Gamma(-\gamma_0\cdot\beta)\,\Gamma(-\gamma_k\cdot\beta)\, 
\prod_{\kappa\in\NN\minus G_0} (-(\gamma_0\cdot\beta)-\kappa)
\prod_{\kappa\in \NN\minus G_k}(-(\gamma_k\cdot\beta) - \kappa),
\end{align}
whenever $\beta$ belongs to \eqref{eqn:ConvergenceDomain}.
\end{theorem}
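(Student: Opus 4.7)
The strategy is to refine the meromorphic extension provided by \eqref{eqn:PhiExtension} by identifying the \emph{spurious} poles: those values with $-\gamma_0\cdot\beta\in\NN$ or $-\gamma_k\cdot\beta\in\NN$ at which the corresponding Gamma factor blows up but $M_f^\Theta(x,\beta)$ itself is actually regular. Such spurious poles should correspond precisely to gaps of the numerical semigroups $G_0$ and $G_k$, which are finite sets since $\gcd(k_2,\dots,k)=1$. Once the vanishing of $\Phi_f^\Theta(x,\beta)$ along each of these gap hyperplanes is established, dividing out the corresponding linear factors will yield the desired $\Psi_f^\Theta$.

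First, I would unfold the construction of $\Phi_f^\Theta$ sketched in [BFP, Rmks.~2.6--7]. Splitting the integration contour $\Arg^{-1}(\theta)$ into a piece near $0$, a piece near $\infty$, and a compact middle arc, I would expand $f(z)^{\beta_1}$ via the binomial theorem with respect to the dominant monomial on each piece: $x_1$ near $0$ and $x_n z^k$ near $\infty$. Collecting the local contributions produces an absolutely convergent double series whose monomials in $z$ are indexed by $G_k$ (near $0$) and by $G_0$ (near $\infty$); integrating termwise against $z^{-\beta_2}\,dz/z$ yields the two Gamma factors of \eqref{eq:extendedEM} paired with series supported on $G_k$ and $G_0$, respectively. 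The crucial observation is that, by construction, no monomial $z^\kappa$ with $\kappa\in\NN\minus G_k$ ever appears in the expansion near $0$, so the residue of $\Gamma(-\gamma_k\cdot\beta)$ at each such $\kappa$ is multiplied by zero. Consequently $\Phi_f^\Theta(x,\beta)$ vanishes identically on every hyperplane $-\gamma_k\cdot\beta=\kappa$ with $\kappa\in\NN\minus G_k$, and symmetrically for $G_0$.

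The main obstacle will be promoting these fibrewise vanishings to a genuine joint-analytic statement. This requires verifying that the double series defining $\Phi_f^\Theta$ converges uniformly on compact subsets of $(\CC^n\minus\Sigma_A)\times\CC^2$, which in turn relies on the fact that $\Sigma_A$ already excludes all loci where the coamoeba touches $\theta$ or where $f$ acquires a repeated root controlling the tail of the binomial expansion. Once uniform convergence is in hand, $\Phi_f^\Theta$ is jointly holomorphic, and an iterated application of Riemann's removable singularity theorem to the finitely many linear factors $-\gamma_0\cdot\beta-\kappa$ for $\kappa\in\NN\minus G_0$ and $-\gamma_k\cdot\beta-\kappa$ for $\kappa\in\NN\minus G_k$ produces a function $\Psi_f^\Theta(x,\beta)$ that is locally analytic on $(\CC^n\minus\Sigma_A)\times\CC^2$ and entire in $\beta$. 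Multiplying \eqref{eqn:PhiExtension} through by the product of these linear factors then gives precisely \eqref{eq:extendedEM}.
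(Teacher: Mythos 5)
Your plan is viable and reaches the right conclusion, but note first that this paper does not reprove the statement: it is imported as \cite[Theorem~4.2]{BFP}, and the construction the paper actually leans on is the one sketched around \eqref{eqn:ExtensionFormulas-1}--\eqref{eqn:ExtensionFormulas-2}. There, integration by parts yields the two recursions, and iterating them writes $M_f^\Theta(x,\beta)$ as a finite linear combination of Euler--Mellin integrals at shifted parameters $\beta-\sum_j a_{i_j}$ (which lie in the convergence tube \eqref{eqn:ConvergenceDomain}, where joint analyticity is free), divided by products of \emph{distinct} linear forms $(\gamma\cdot\beta)+\kappa$ with $\kappa$ a sum of the quantities $-\gamma\cdot a_i$, i.e.\ with $\kappa$ in the relevant numerical semigroup. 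This exhibits the polar locus as exactly $\Pol(A)$ with simple poles. Your route is instead the classical \emph{partie finie} continuation: split the ray at $\epsilon$ and $R$, expand $f^{\beta_1}$ against the dominant vertex monomial at each end, and integrate termwise so the poles appear as $\epsilon^{\kappa-\beta_2}/(\kappa-\beta_2)$ (and the analogues at $\infty$), with $\kappa$ ranging over the semigroup of exponents actually occurring in the local expansion. Both methods identify the true poles with the semigroups, hence the spurious Gamma poles with the gap sets (finite because the gcd of the generators is $1$ in both cases, so the correction products are finite), after which dividing out the linear factors is routine. What the extension-formula route buys is the explicit combinatorial form of the coefficients (ordered partitions of $N$ with parts $k_i$) that the paper needs later in \eqref{eqn:SolutionAlongPolar}--\eqref{eqn:SolutionAlongPolar2}; what your route buys is a more direct identification of the residues along polar lines with coefficients of the local expansion of $f^{\beta_1}$.

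Two points to tighten. First, the bookkeeping pairing the semigroups with the contour ends: the expansion at $z\to 0$ is supported on $\NN\{k_2,\dots,k\}$ and produces poles in $\beta_2$, i.e.\ in the $\gamma_0$-direction, while the expansion at $z\to\infty$ is supported on $\NN\{k-k_{n-1},\dots,k\}$ and produces poles in $k\beta_1-\beta_2$, the $\gamma_k$-direction; your text attaches the near-$0$ expansion to $\Gamma(-\gamma_k\cdot\beta)$, so make sure your labels are reconciled with \eqref{eq:numSemigroups} and \eqref{eq:extendedEM} (the printed conventions here are themselves not perfectly consistent, so this is bookkeeping rather than substance, but it must be settled before the cancellation argument is airtight). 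Second, the uniform-on-compacts convergence you defer to is governed not by the discriminant or by repeated roots, but simply by $x_1\neq 0$ and $x_n\neq 0$ (for the two ends) together with $f$ being zero-free on the ray $\arg z=\theta$, which is exactly what $\theta\in\Theta$ guarantees; the estimate itself is a routine geometric-series bound on the multinomial coefficients for $\beta$ in a compact set, so this acknowledged gap is fillable.
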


\begin{definition}
\label{def:extendedEM}
We call the function $\Psi_f^\Theta(x, \beta)$ 
in~\eqref{thm:BFP:EMconverge} the \emph{extended Euler--Mellin integral} corresponding to $f, \beta$, and the component $\Theta$ of the complement of the coamoeba $\sC(f)$. 
\end{definition}

The \emph{polar locus} of $A$, denoted by $\Pol(A)$, is the union in $\CC^2$ of 
the zero loci of the linear polynomials 
$-(\gamma_0\cdot\beta)-\kappa$, for $\kappa\in G_0$ 
and 
$-(\gamma_k\cdot\beta) - \kappa$, for $\kappa \in G_k$. 
In other words, 
\begin{equation}
\label{eqn:polar}
\Pol(A) = 
\bigcup_{\kappa\in G_0} \Var(-(\gamma_0\cdot\beta)-\kappa)
\ \cup \  
\bigcup_{\kappa\in G_k} \Var(-(\gamma_k\cdot\beta) - \kappa).
\end{equation}

Note that $\Pol(A)$ is strictly contained in the set of resonant parameters, $\ResArr(A)$ of~\eqref{eqn:ResArr}, see Figure~\ref{fig:PolesVsResonance}.
It contains the poles of all Euler--Mellin integrals $M_f^\Theta(x,\beta)$ by Theorem~\ref{thm:BFP:EMconverge}. 
Note also that $\Phi_f^\Theta(x, \beta)$ vanishes along the (finitely
many) lines, corresponding to the poles of the Gamma functions in~\eqref{eqn:PhiExtension} which are not contained in 
$\Pol(A)$; however, this vanishing is artificial as it is introduced by the use of unnecessary factors in the Gamma functions 
in the definition of $\Phi_f^\Theta$. 
This explains our preference for the functions $\Psi_f^\Theta$. 

\begin{figure}[t]
\centering
\includegraphics[angle=180,width=50mm]{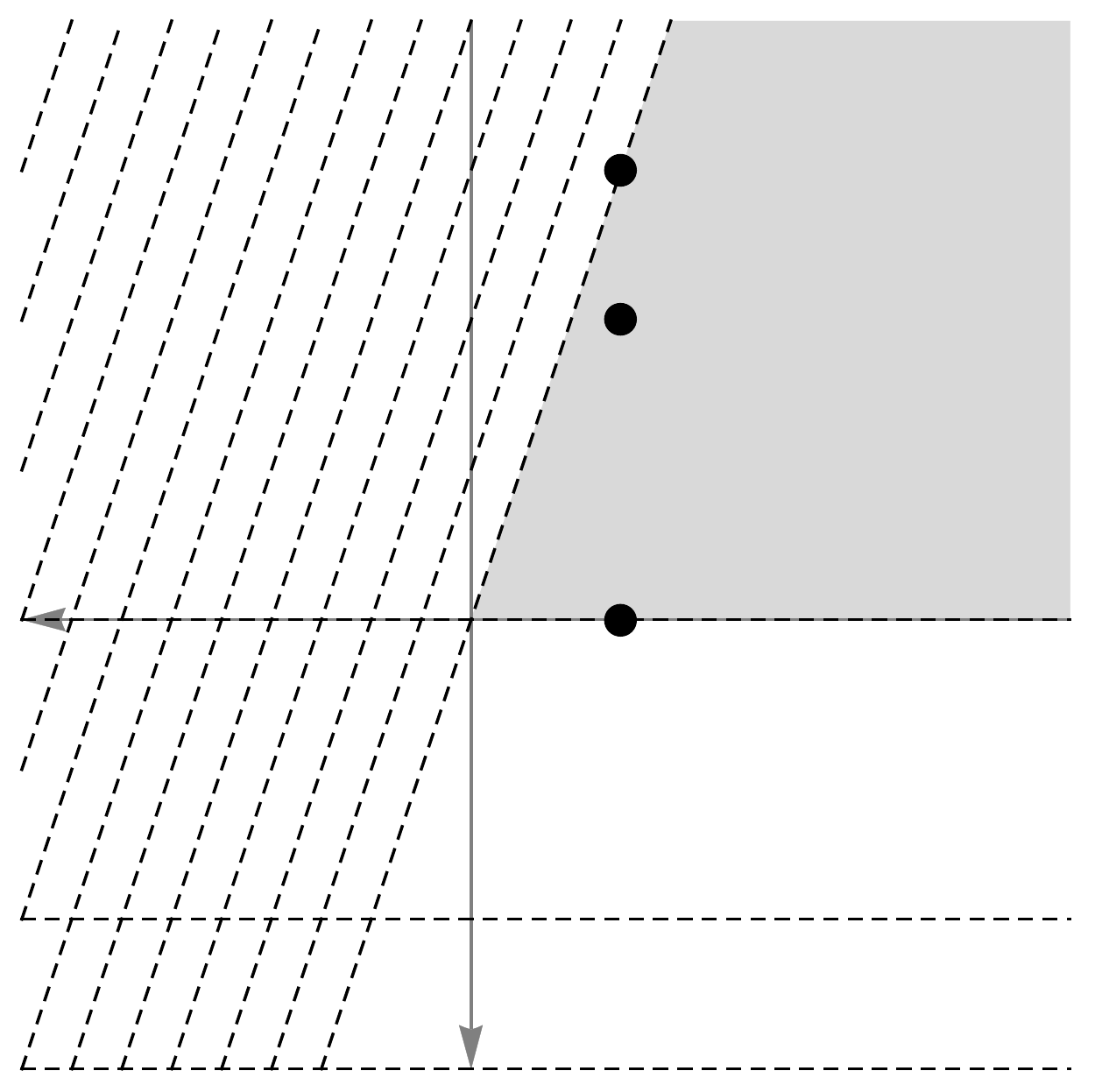}
$\qquad$
\includegraphics[angle=180,width=50mm]{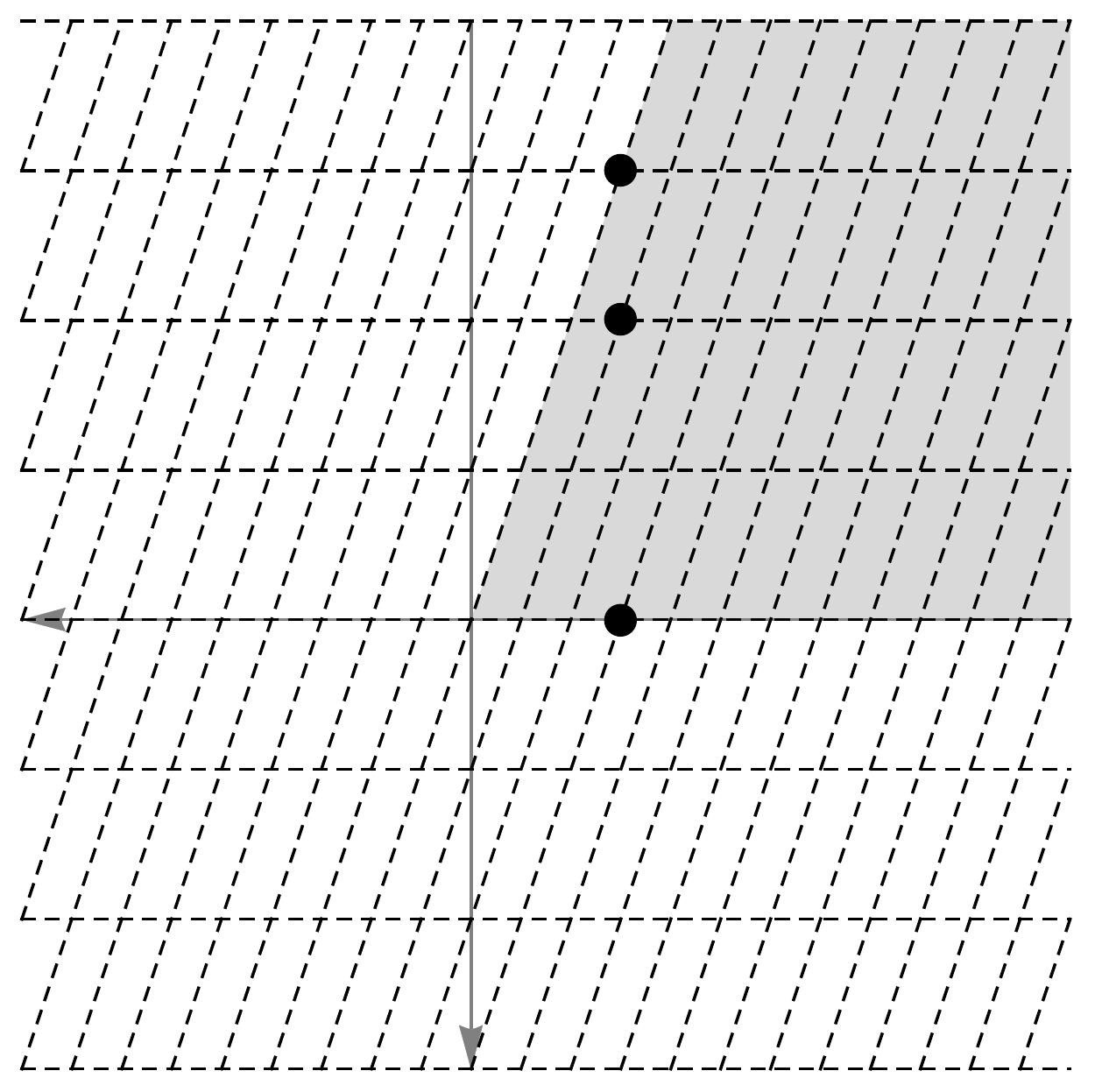}
\caption{The polar lines (left) and resonant lines (right) when
$A = $\usebox{\smlmat}.
The opposites of the points of $A$
are highlighted. The shaded region is the domain
of convergence of $M_f^\Theta(x,\beta)$.} 
\label{fig:PolesVsResonance}
\end{figure}

Let $a_i$ denote the $i$th column of the matrix $A$, i.e., $a_i = \left[\begin{smallmatrix}1\\k_i\end{smallmatrix}\right]$. 
The meromorphic extension of~\eqref{eq:EMintegral} is in this notation given by the
\emph{extension formulas} from~\cite[(2.9)]{BFP}: 
\begin{align}
\label{eqn:ExtensionFormulas-1}
M_f^\Theta(x,\beta) 
= & \frac{-\beta_2}{\gamma_0\cdot\beta} \sum_{i = 1}^{n} k_i\,
x_{i}\,M_f^\Theta(x,\beta- a_i)\quad\text{and} \\
\label{eqn:ExtensionFormulas-2}
M_f^\Theta(x,\beta) 
= & \frac{-\beta_2}{\gamma_k\cdot\beta}\sum_{i=1}^{n}(k-k_i)\,x_{i}\,M_f^\Theta(x,\beta-a_i).
\end{align}
The factor $k_i$ in~\eqref{eqn:ExtensionFormulas-1} implies that the right hand side has a vanishing term for $i=1$, and similarly for $i=n$ in~\eqref{eqn:ExtensionFormulas-2}. 
This vanishing implies that \eqref{eqn:ExtensionFormulas-1} and \eqref{eqn:ExtensionFormulas-2} provide meromorphic extensions 
of $M_f^\Theta(x,\beta)$; the right hand side of~\eqref{eqn:ExtensionFormulas-1} converges in the domain
\[
\bigcap_{i=2}^{n} \left\{\beta\in \CC^2 
\;\big\vert\;
\Re(-\gamma_0\cdot(a_i+\beta)) < 0 \,\text{ and }\,
\Re(-\gamma_k\cdot(a_i+\beta)) < 0
\right\},
\]
which, since $k_i > 0$ for $i>0$, is strictly greater
than~\eqref{eqn:ConvergenceDomain}. 

Applying the extension formula~\eqref{eqn:ExtensionFormulas-1} a second time, the $i$th term of~\eqref{eqn:ExtensionFormulas-1} is divided by the linear form 
\[
\gamma_0\cdot( \beta - a_i) = (\gamma_0\cdot\beta) + \kappa,
\]
where $\kappa = -\gamma_0\cdot a_i$ is a nonzero element of $G_0$. 
Repeated use of the extension formulas \eqref{eqn:ExtensionFormulas-1} and \eqref{eqn:ExtensionFormulas-2} yields an expression for the meromorphic extension of $M_f^\Theta(x, \beta)$ as a linear combination of shifted Euler--Mellin integrals, with
coefficients that are 
products of reciprocals of distinct linear forms corresponding to resonant lines that belong to $\Pol(A)$.

Consequently, an expression for the extended Euler--Mellin integral $\Phi_f^\Theta(x, \beta)$ can, for each fixed $\beta$, be obtained from 
$M_f^\Theta(x,\beta)$ by iterations of the extension
formulas~\eqref{eqn:ExtensionFormulas-1}
and~\eqref{eqn:ExtensionFormulas-2}. 
To do this, 
choices must be made as to the order in which the two types of extensions are performed. 
We focus on the two orderings given by either first extending over lines resonant with respect to the face $\{\fface\}$, and then extending over lines resonant with respect to the face $\{\lface\}$, or vice versa.

If $\beta$ is contained in a polar line $L$ with defining equation $(\gamma_j\cdot\beta) - \kappa  = 0$, then restricting the entire function $\Psi_f^\Theta(x, \beta)$ in $\beta$ to $L$, the only terms in the iterated expansion of $M_f^\Theta(x,\beta)$ that do not vanish are those for which the linear form $(\gamma_j\cdot\beta) - \kappa$ appears in the denominators of their coefficients. 
Thus, in order to explain the behavior of $\Psi_f^\Theta$ at such resonant parameters, it is necessary to carefully track the combinatorics of the coefficients in the expansion process of $M_f^\Theta(x,\beta)$. 
This is of particular importance when the parameter $\beta$ is contained in the intersection of two polar lines.

We conclude this section by noting that 
when $A$ has the form~\eqref{eqn:A} and $\beta$ is nonresonant (see Definition~\ref{def:resonant}), then extended Euler--Mellin integrals form a basis of the solution space of $H_A(\beta)$ at any nonsingular $x\in\CC^n$. 

\begin{theorem}{\cite[Proposition~5.1]{BFP}}
\label{thm:EMindep}
The extended Euler--Mellin integrals $\{\Psi_f^\Theta(\beta,x)\}$ of Definition~\ref{def:extendedEM}, where $\Theta$ ranges over a set of connected components of the complement of $\sC(f)$, are linearly independent when $\beta \in \CC^2 \minus \ResArr(A)$ and 
\[
x \in 
V \defeq \{ x \in \CC^n \mid \max(|x_2|, \dots, |x_{n-1}|) \ll \min(|x_1|, |x_n|)\}\subseteq \CC^n\minus \Sigma_A. 
\]
Hence, for any 
$\beta \in \CC^2 \minus \ResArr(A)$, such a collection of extended Euler--Mellin integrals can be analytically continued to form a basis of the solution space of the $A$-hypergeometric system $H_A(\beta)$ at any $x\in\CC^n\minus \Sigma_A$. 
\end{theorem}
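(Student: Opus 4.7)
The plan is to exploit the geometry of $V$, where $f$ is a small perturbation of the binomial $g(z) := x_1 + x_n z^k$, to extract explicit leading-order asymptotics of the integrals $M_f^\Theta(x,\beta)$, from which linear independence follows via a Vandermonde argument in the $k$-th roots of unity. The first step is to enumerate the relevant components: the roots of $g$ have evenly spaced arguments $\theta_j := \tfrac{1}{k}(\arg(-x_1/x_n) + 2\pi j)$ for $j = 0, \ldots, k-1$, so $\sC(g)$ consists of $k$ distinct points in $\RR/2\pi\ZZ$. By Rouch\'e's theorem, for $x \in V$ small enough, $f$ has $k$ simple roots perturbing those of $g$, so $\sC(f)$ also has exactly $k$ points and its complement has exactly $k$ arc components $\Theta_0, \ldots, \Theta_{k-1}$, labeled so that (to leading order) consecutive midpoints $\theta_j^* \in \Theta_j$ differ by $2\pi/k$.

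Second, for $\beta$ in the convergence domain~\eqref{eqn:ConvergenceDomain} I would parametrize $\Arg^{-1}(\theta_j^*)$ by $z = re^{i\theta_j^*}$ and expand $f^{\beta_1} = (g + \sum_{i=2}^{n-1} x_i z^{k_i})^{\beta_1}$ as a power series in the small ratios $x_2/x_1, \ldots, x_{n-1}/x_1$. Termwise integration reduces each coefficient to a classical beta-type integral that evaluates, up to an explicit ratio of Gamma functions, to a monomial in $x_1, x_n$ times a phase factor $e^{i\theta_j^*(s - \beta_2)}$, where $s = \sum_{i=2}^{n-1} m_i k_i$ is the total $z$-exponent coming from the perturbation term indexed by $(m_2, \ldots, m_{n-1}) \in \NN^{n-2}$. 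Since moving from $\Theta_0$ to $\Theta_j$ shifts $\theta_j^*$ by $2\pi j/k$, the $j$-dependence of each term in the resulting series is the multiplicative factor $\zeta^{j(s - \beta_2)}$, where $\zeta := e^{2\pi i/k}$.

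Third, a putative linear dependence $\sum_{j=0}^{k-1} c_j(\beta)\, M_f^{\Theta_j}(x,\beta) \equiv 0$ on $V$ then forces, for every $s$ appearing in the expansion, the identity $\sum_j c_j\, \zeta^{js}\zeta^{-j\beta_2} = 0$. The reductions modulo $k$ of the $s = \sum m_i k_i$ realize the subgroup of $\ZZ/k\ZZ$ generated by $k_2, \ldots, k_{n-1}$, which equals all of $\ZZ/k\ZZ$ by the hypothesis $\gcd(k_2,\ldots,k_{n-1},k) = 1$. Absorbing the common factor $\zeta^{-j\beta_2}$ into $c_j$, the system then runs through all $k$ residues and yields a Vandermonde system in the distinct roots $\zeta^0, \zeta, \ldots, \zeta^{k-1}$ that forces $c_j \equiv 0$. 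For $\beta \notin \ResArr(A)$, the Gamma factors and shift products in~\eqref{eq:extendedEM} are all finite and nonzero, so the extended integrals $\Psi_f^{\Theta_j}$ inherit this linear independence.

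Finally, by \cite{GGZ,GKZ,adolphson}, $\rank H_A(\beta) = \vol(A) = k$ when $\beta \notin \ResArr(A)$, so the $k$ independent solutions $\Psi_f^{\Theta_j}$ form a basis of the solution space at every $x \in V$. Since the solution sheaf of $H_A(\beta)$ is locally constant on $\CC^n \minus \Sigma_A$, analytic continuation along any path extends this to a basis at every nonsingular point. The main obstacle will be the rigorous justification of termwise integration of the perturbation expansion and the careful bookkeeping of the $\theta_j^*$-dependence through branch cuts; both reduce to the Vandermonde argument above, whose nondegeneracy is precisely underpinned by the arithmetic hypothesis $\gcd(k_2, \ldots, k_{n-1}, k) = 1$.
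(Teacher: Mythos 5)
The paper does not actually prove this statement; it is imported verbatim as \cite[Proposition~5.1]{BFP}, so there is no internal proof to compare against. Your reconstruction follows the same strategy as the cited proof: on the lopsided region $V$ the integral reduces to a perturbation of the binomial $x_1+x_nz^k$, whose Euler--Mellin integrals are explicit Beta-type expressions, and the component index $j$ enters each term of the expansion only through the character $j\mapsto e^{2\pi i j(s-\beta_2)/k}$; the hypothesis $\gcd(k_2,\dots,k_{n-1},k)=1$ makes $s$ sweep all residues mod $k$, and the Vandermonde/DFT matrix does the rest. The one step you wave at too quickly is the nonvanishing of the coefficients for \emph{all} nonresonant $\beta$: nonresonance only excludes $\beta_2\in\ZZ$ and $k\beta_1-\beta_2\in\ZZ$, not $\beta_1\in\ZZ$, and for integer $\beta_1$ the naive coefficient $\binom{\beta_1}{m}\cdot\Gamma\!\left(\tfrac{s-\beta_2}{k}\right)\Gamma\!\left(m-\beta_1-\tfrac{s-\beta_2}{k}\right)/\Gamma(m-\beta_1)$ can degenerate; one must use $\binom{\beta_1}{m}/\Gamma(m-\beta_1)=(-1)^m/\bigl(m!\,\Gamma(-\beta_1)\bigr)$ so that the offending factor cancels against the $\Gamma(-\beta_1)$ in the normalization~\eqref{eq:extendedEM} defining $\Psi_f^\Theta$, after which every coefficient is a ratio of Gamma factors whose zeros and poles lie only on resonant lines. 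With that detail supplied, the argument is correct.
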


\begin{remark}
\label{rmk:EMobstacles}
If $A$ is more general, so that it 
corresponds to a $(d-1)$-dimensional projective toric variety, where $d>2$, we are not aware of a proof that the set of integrals obtained from the components of the complement of the closure of $\sC(f)$ are linearly independent. 
If this were the case, it still might happen that $A$ does not have $\vol(A)$-many associated extended Euler--Mellin integrals that could be used to form a basis of the solution space of some $H_A(\beta)$. Even if one works up to $\GL_d(\ZZ)$-equivalence, which induces an isomorphism on $A$-hypergeometric systems, this difficulty cannot always be overcome. 
For example, if $A$ is the matrix
\begin{equation}
\label{eqn:A-ex}
A = \begin{bmatrix} 
1 & 1 & 1 & 1 & 1\\
0 & 2 & 2 & 1 & 0\\
0 & 0 & 2 & 3 & 3
\end{bmatrix},
\end{equation}
then the rank of $\Sol_{\bar{x}}(H(\beta))$ is $11$ for all parameters $\beta$. However,
$10$ is the maximal number of connected components of the complement of the coamoeba of a polynomial
\[
\hspace{4cm}
f(z_1,z_2) = x_1 + x_2 z_1^2 + x_3 z_1^2 z_2^2 + x_4 z_1 z_2^3 + x_5 z_2^3.
\hspace{4cm}
\hexagon
\]
\end{remark}

\section{Parametric derivatives and solution sheaves}
\label{sec:PD+sheaf}

In this section we discuss, in a general setting, the classical
method of taking parametric derivatives to obtain solutions of
differential equations 
at special values of the parameters. 
This technique gives a partial understanding of the parametric variation of solutions of $H_A(\beta)$, 
as shown in Corollary~\ref{cor:sheaf}, which is a weaker version of Theorems~\ref{thm:main}.

Let $P_1(x,\del),\dots,P_m(x,\del)$ be linear partial differential
operators in $D[\beta]$, the Weyl algebra on $\CC^n$ with additional commuting variables $\beta = \beta_1,\dots,\beta_d$. 
Let $\varphi_1(\beta),\dots,\varphi_m(\beta)$ be polynomial functions of
$\beta \in \CC^d$, viewed as elements of $D[\beta]$. 
Consider the system $H(\beta)$ of parametric linear partial differential equations given by
\begin{equation}
\label{eqn:ParametricDerivativesPDE}
P_i(x,\del) \bullet \Phi(x,\beta) = \varphi_i(\beta) \Phi(x, \beta),
\qquad i=1,\dots, m.
\end{equation}
We view $H(\beta)$ as a left ideal in $D[\beta]$ defined by the operators
$P_i(x,\del) - \varphi_i(\beta)$.   

Assume further that $\Phi(x, \beta)$ is locally analytic for $x$ in an open set $V\subseteq\CC^n$ and 
$\beta$ in a neighborhood $\mathcal{U}$ (in the analytic topology) of $\bar{\beta} \in \CC^d$. 
For $\gamma \in \CC^d$, denote by $\nabla_\gamma$ the 
differentiation operator with respect to $\beta$ in the direction of
$\gamma$. 
Applying $\nabla_\gamma$ to both sides of~\eqref{eqn:ParametricDerivativesPDE} yields
\[
P_i(x,\del)\bullet \nabla_\gamma \Phi(x,\beta) = \nabla_\gamma\varphi_i(\beta)
\cdot \Phi(x,\beta) + \varphi_i(\beta) \cdot \nabla_\gamma \Phi(x,\beta),
\qquad i=1,\dots, m.
\]
Thus, if $\bar{\beta}$ is such that $\Phi(x,
\bar{\beta}) \equiv 0$ for $x\in V$, then it follows that $\nabla_\gamma \Phi(x,\bar{\beta})$
solves~\eqref{eqn:ParametricDerivativesPDE} at $\beta = \bar{\beta}$. 
In this case, we say
that $\nabla_\gamma \Phi(x,\bar{\beta})$ has been \emph{constructed from $\Phi(x,\bar{\beta})$ by taking parametric derivatives}. 

If the function $\nabla_\gamma \Phi(x, \beta)$ happens to vanish
identically at $\beta = \bar{\beta}$, then the parametric derivative procedure can be
iterated. 
We now state a sufficient condition for this algorithm to terminate after a finite number of
steps. 
Note that the extended Euler--Mellin integrals provide a basis of solutions for $H_A(\beta)$ at nonsingular $x$ and nonresonant $\beta$ and thus satisfy the hypotheses of this result. 

\begin{proposition}
\label{pro:ParametricDerivatives}
Let $\Phi(x, \beta)$ be a solution
to~\eqref{eqn:ParametricDerivativesPDE}, which is analytic in
$x$ in an open subset $V \subseteq \CC^n$, and locally analytic and not identically vanishing in a neighborhood $\cU$ of $\bar{\beta}$. Assume further
that $\Phi(x, \beta)$ is identically vanishing when $\beta$ is restricted to a
hyperplane $L \subset \CC^d$ containing $\bar{\beta}$. Then for each $\gamma\notin T_L(\bar{\beta})$, the tangent space of $L$ at $\bar{\beta}$, the above process terminates after a finite number of steps. That is, for each $\bar{\beta}$, there is an integer $q = q(\bar{\beta},\gamma)$ such that $\nabla_\gamma^{(q)}\Phi(x, \beta) \not\equiv 0$ for $\beta \in L\cap \cU$. 
Furthermore, the number $q$ does not depend on $\gamma$, and thus it defines
a function $\bar{\beta} \mapsto q(\bar{\beta})$, 
which in turn is upper semicontinuous 
in the analytic topology of $\CC^d$.
\end{proposition}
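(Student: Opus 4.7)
My approach would center on an analytic Weierstrass-type factorization of $\Phi$ along the divisor $V\times L$, together with a direct Leibniz computation that exploits the linearity of the defining form of $L$. Let $\ell(\beta)$ be a nonzero linear form cutting out $L$, so that $\ell(\bar\beta)=0$. Because $\Phi$ is jointly analytic on $V\times\cU$ and vanishes identically on the smooth divisor $V\times(L\cap\cU)$, iterated division by $\ell$ yields, after possibly shrinking $\cU$, a factorization
\[
\Phi(x,\beta)\;=\;\ell(\beta)^{q}\,\Psi(x,\beta),
\]
where $q\geq 1$ is maximal and $\Psi$ is jointly analytic with $\Psi|_{V\times(L\cap\cU)}\not\equiv 0$. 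Concretely, in any coordinates on $\CC^d$ in which $L=\{\beta_1=0\}$ and $\bar\beta$ is the origin, the hypothesis forces all Taylor coefficients of $\Phi$ with $\alpha_1=0$ to vanish identically in $x$, so $\Phi$ is divisible by $\beta_1$, and iterating yields the claimed $q$. I will show that this integer coincides with the $q(\bar\beta)$ of the statement.

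Let $\eta$ be any normal vector to $L$, so that $\nabla_\gamma\ell=\eta\cdot\gamma$ is a constant, and the hypothesis $\gamma\notin T_L(\bar\beta)$ translates into $\eta\cdot\gamma\neq 0$. Because $\ell$ is linear, $\nabla_\gamma^{(k)}(\ell^{q})=\tfrac{q!}{(q-k)!}(\eta\cdot\gamma)^{k}\ell^{q-k}$ for $0\leq k\leq q$ and vanishes for $k>q$. Expanding $\nabla_\gamma^{(r)}\Phi$ by the Leibniz rule and restricting $\beta$ to $L$ kills every summand except the $k=q$ term:
\[
\nabla_\gamma^{(r)}\Phi\big|_{V\times(L\cap\cU)} \;=\; \binom{r}{q}\,q!\,(\eta\cdot\gamma)^{q}\,\nabla_\gamma^{(r-q)}\Psi\big|_{V\times(L\cap\cU)} \qquad (r\geq q),
\]
with all lower-order derivatives vanishing on $V\times(L\cap\cU)$. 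At $r=q$, the right-hand side is $q!(\eta\cdot\gamma)^{q}\Psi|_{V\times(L\cap\cU)}$, which is not identically zero because $\eta\cdot\gamma\neq 0$ and $\Psi|_{V\times(L\cap\cU)}\not\equiv 0$ by the maximality of $q$. This proves termination at step $q$ and, in one stroke, shows that this integer depends neither on the transverse direction $\gamma$ nor on the base point within a common chart in which the factorization is valid.

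For upper semicontinuity of the resulting function $\bar\beta\mapsto q(\bar\beta)$, I would observe that $q(\bar\beta)$ equals the local order of vanishing of $\Phi$ along the smooth divisor $V\times L$ at $\bar\beta$. For each $N$, the level set $\{q(\bar\beta)\geq N\}$ is the common zero locus of the analytic functions $\nabla_\gamma^{(0)}\Phi,\ldots,\nabla_\gamma^{(N-1)}\Phi$ restricted to $V\times L$, which is a closed analytic subset of $L$, hence closed in $\CC^d$ in the analytic topology. The main technical hurdle I anticipate is executing the joint Weierstrass factorization in the variables $(x,\beta)$ with enough uniformity to guarantee that $\Psi|_{V\times(L\cap\cU)}\not\equiv 0$; this is addressed by Weierstrass preparation in the $\beta$-variables with $x$ as a parameter, using that $\Phi$ does not identically vanish on $V\times\cU$. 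Once the factorization is secured, the linearity of $\ell$ renders the surviving Leibniz term completely rigid, which yields both the independence from $\gamma$ and the upper semicontinuity.
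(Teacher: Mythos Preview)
Your argument is correct and takes a genuinely different route from the paper. The paper proceeds by contradiction via Taylor's formula: choosing a basis $v_1,\dots,v_{d-1}$ of $T_L(\bar\beta)$ together with $\gamma$, it notes that all tangential derivatives $\nabla_{v_i}^{(m)}\Phi$ already vanish on $L\cap\cU$, so if every $\nabla_\gamma^{(q)}\Phi$ vanished there too, all mixed partials of $\Phi$ would vanish at $\bar\beta$, forcing $\Phi\equiv 0$. Independence from $\gamma$ is then obtained by a change of variables, and upper semicontinuity from the observation that nonvanishing on $L\cap\cU$ persists in a neighborhood. Your factorization $\Phi=\ell^q\Psi$ followed by the Leibniz computation is more explicit: it produces the exact formula $\nabla_\gamma^{(q)}\Phi|_L=q!(\eta\cdot\gamma)^q\,\Psi|_L$, which simultaneously yields termination, $\gamma$-independence (since $q$ is defined without reference to $\gamma$), and essentially the content of the subsequent Proposition~\ref{pro:NoBadPoints} as well. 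Two minor comments: first, invoking Weierstrass preparation is heavier than necessary, since the factorization is just iterated division by $\beta_1$ in the adapted coordinates you already introduced; second, in your upper semicontinuity argument the set $\{q(\bar\beta)\geq N\}$ is characterized by the \emph{germ} of each $\nabla_\gamma^{(r)}\Phi|_{V\times L}$ vanishing near $\bar\beta$, not merely by pointwise vanishing at $\bar\beta$, so the description as a ``common zero locus'' is not literally correct, though the conclusion still follows because germ-vanishing is an open condition and its complement is open as well by the argument the paper gives.
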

\begin{proof}
Let $v_1, \dots, v_{d-1} \in T_L(\bar{\beta})$ extend by $\gamma$ to a basis of $\CC^d$. Then 
\[
\nabla_{v_i}^{(q)} \Phi(x, \beta) \equiv 0 \quad \text{for} \quad i \in\{ 1, \dots, d-1\}, \quad q\geq 0,\quad \text{and}\quad  \beta \in L\cap \cU.
\]
If in addition, $\nabla_\gamma^{(q)}\Phi(x, \beta) = 0$ for all $q$ and $\beta \in L\cap \cU$, then
all mixed derivatives $\nabla_{v_i}^{(q_i)} \nabla_\gamma^{(q)}\Phi(x, \beta)$ also vanish in $L\cap \cU$; in particular, they vanish at $\bar{\beta}$. Hence  Taylor's formula implies that
$\Phi(x, \beta) = 0$ for all $\beta$ in a neighborhood of
$\bar{\beta}$, a contradiction. That 
$q$ does not depend on $\gamma$ follows by a change of variables fixing
$v_1, \dots, v_{d-1}$. To see that the map $\bar{\beta} \mapsto
q(\bar{\beta})$ is upper semicontinous, 
it is enough to note that $\nabla_\gamma^{(q)}\Phi(x, \beta) \not \equiv
0$ for $\beta \in L\cap \cU$ implies that $\nabla_\gamma^{(q)}\Phi(x, \beta) \not \equiv 0$ for
$\beta$ in some (analytic) open neighborhood of $\bar{\beta}$. 
\end{proof}

\begin{proposition}
\label{pro:NoBadPoints}
With the hypotheses of Proposition \ref{pro:ParametricDerivatives}, assume further that in a neighborhood $\cV$ of $\bar \beta$, the function $\Phi(x, \beta)$ is not identically vanishing in $x$ for any fixed $\beta\in\cV\minus L$. 
Then $\nabla_\gamma^{(q)}\Phi(x, \beta)$ 
is not identically vanishing in $x$ at $\bar \beta$.
\end{proposition}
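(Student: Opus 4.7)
The plan is to argue by contradiction. Assume $\nabla_\gamma^{(q)}\Phi(x,\bar\beta)\equiv 0$ as a function of $x\in V$, and derive a contradiction from the extra hypothesis of the proposition.

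First, I would establish a local factorization based on the definition of $q$. Choose a linear form $\ell$ on $\CC^d$ dual to $\gamma$ and vanishing on $T_L(\bar\beta)$, so that $L-\bar\beta=\{\ell=0\}$. The defining property of $q$ in Proposition~\ref{pro:ParametricDerivatives}, namely that $\nabla_\gamma^{(j)}\Phi\equiv 0$ on $V\times(L\cap\cU)$ for $0\le j<q$, together with Taylor's theorem in the $\gamma$-direction, yields
\[
\Phi(x,\beta) \;=\; \ell(\beta-\bar\beta)^q\,\tilde\Phi(x,\beta),
\]
with $\tilde\Phi$ analytic on $V\times\cU$ (shrinking $\cU$ if needed), and $\tilde\Phi\big|_{L\cap\cU}=\tfrac{1}{q!}\nabla_\gamma^{(q)}\Phi\big|_{L\cap\cU}$ not identically vanishing on $V\times(L\cap\cU)$ by the minimality of $q$. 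Because the operators $P_i$ in \eqref{eqn:ParametricDerivativesPDE} are independent of $\beta$, dividing each equation $P_i\Phi=\varphi_i(\beta)\Phi$ by $\ell(\beta-\bar\beta)^q$ shows that $\tilde\Phi$ solves the same parametric system.

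Second, the contradiction hypothesis becomes $\tilde\Phi(\cdot,\bar\beta)\equiv 0$ in $x$. I would then combine this with the extra hypothesis of Proposition~\ref{pro:NoBadPoints}. Since $\Phi=\ell^q\tilde\Phi$ and $\ell(\beta-\bar\beta)\ne 0$ off $L$, the assumption that $\Phi(\cdot,\beta)\not\equiv 0$ in $x$ for $\beta\in\cV\setminus L$ forces $\tilde\Phi(\cdot,\beta)\not\equiv 0$ in $x$ for every $\beta\in\cV\setminus L$. The auxiliary set
\[
\tilde Z \;:=\; \{\beta\in\cU \,:\, \tilde\Phi(\cdot,\beta)\equiv 0\text{ in }x\}
\]
is an analytic subset of $\cU$ (as the common zero locus of the Taylor coefficients of $\tilde\Phi$ in $x$ around a fixed basepoint of $V$). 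The previous sentence gives $\tilde Z\cap\cV\subseteq L$, and the non-vanishing of $\tilde\Phi|_{L\cap\cU}$ makes $\tilde Z\cap L$ a proper analytic subset of $L\cap\cV$ containing $\bar\beta$.

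Third, to extract the contradiction, I would iterate the parametric-derivative construction. Since $\tilde\Phi$ is itself an analytic family of solutions to \eqref{eqn:ParametricDerivativesPDE} and vanishes at $\bar\beta$, applying (the construction underlying) Proposition~\ref{pro:ParametricDerivatives} to $\tilde\Phi$ along a direction $\delta$ inside $T_L(\bar\beta)$ for which $\tilde\Phi$ fails to be identically vanishing produces a higher-order factor $\ell'(\beta-\bar\beta)^r$ of $\tilde\Phi$ at $\bar\beta$. Substituting into $\Phi=\ell(\beta-\bar\beta)^q\tilde\Phi$, comparing mixed Taylor coefficients in $(\ell,\ell')$ and invoking the symmetry of parametric derivatives, I expect one can exhibit a vanishing of $\nabla_\gamma^{(q)}\Phi$ along $L\cap\cU$ that contradicts the minimality of $q$ from Proposition~\ref{pro:ParametricDerivatives}. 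The main obstacle is precisely this last step: the pure analytic-set input permits $\bar\beta$ to be an isolated point of $\tilde Z$, so the iteration has to be arranged to transfer vanishing in a tangential direction of $L$ at $\bar\beta$ back into additional vanishing of $\Phi$ in the $\gamma$-direction on all of $L\cap\cU$, which is where the PDE structure — and hence the assumption that $\Phi$ genuinely solves \eqref{eqn:ParametricDerivativesPDE} rather than being an arbitrary analytic function — must be used in an essential way.
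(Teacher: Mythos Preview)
Your setup through the factorization $\Phi=\ell(\beta-\bar\beta)^q\,\tilde\Phi$ matches the paper: there the same quotient $\Phi(x,\beta)((\gamma\cdot\beta)-\kappa)^{-q}$ is formed (phrased via l'H\^opital), extended analytically across $L$, and observed to solve the same system with $\tilde\Phi|_L\not\equiv 0$ by the minimality of $q$. Your observation that the extra hypothesis forces $\tilde Z=\{\beta:\tilde\Phi(\cdot,\beta)\equiv 0\}$ to lie in $L$ near $\bar\beta$, with $\tilde Z\cap L$ proper in $L$, is also exactly what the paper extracts.

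The genuine gap is your third step, which you yourself flag as the ``main obstacle.'' Your plan to iterate parametric differentiation on $\tilde\Phi$ in a tangential direction $\delta\in T_L(\bar\beta)$ does not fit the framework of Proposition~\ref{pro:ParametricDerivatives}, which requires vanishing along a hyperplane; if $\bar\beta$ is an isolated point of $\tilde Z$, there is no such hyperplane available and the iteration cannot even start. Even granting a local factor $\ell'(\beta-\bar\beta)^r$, this records only vanishing at $\bar\beta$ and says nothing about $\nabla_\gamma^{(q)}\Phi$ elsewhere on $L\cap\cU$; since $q$ is characterized by non-vanishing on $L\cap\cU$ (not at a single point), no contradiction with its minimality follows. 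The sentence ``the PDE structure \dots\ must be used in an essential way'' is an expectation, not an argument, and the proposal stops short of supplying one.

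The paper takes a different and much shorter route at this point and does \emph{not} invoke the PDE structure. Having noted that $\tilde Z$ is an analytic subvariety of codimension at least $1$ in $L$ and, by the extra hypothesis, contained in $L$ --- hence of codimension at least $2$ in $\CC^d$ --- the paper concludes directly that it is empty. If you accept that implication, your entire third step is unnecessary; if your concern about isolated points of $\tilde Z$ makes you doubt it, then the issue lies with the paper's final line rather than with your diagnosis, and supplying a justification for that implication (or clarifying what additional hypotheses are implicitly in play) would be the substantive contribution.
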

\begin{proof}
We can assume that $\gamma$ is the normal vector for the hyperplane $L$, so that $L$ is defined by a linear equation $\gamma\cdot\beta = \kappa$.
By l'H\^opital's rule, for $\beta\in\cV\cap L$, 
\[
\nabla_\gamma^{(q)}\Phi(x, \beta) = \left.\Phi(x, \beta)((\gamma\cdot\beta)-\kappa)^{-q}\right.
\]
Hence for $\beta\in\cV\cap L$, the function
$\nabla_\gamma^{(q)}\Phi(x,\beta)$ can be replaced  
by the analytic extension to $L$ of the ratio
\[
\Phi(x, \beta)((\gamma\cdot\beta)-\kappa)^{-q},
\]
which gives a solution of $H(\beta)$ that varies locally analytically in a neighborhood of $L$.

The vanishing locus of $\Phi(x, \beta)((\gamma\cdot \beta)-\kappa)^{-q}$ is an analytic subvariety of codimension at least $1$ in $L$. Thus, by assumption, this vanishing locus is of codimension at least $2$ in $\CC^d$, which implies that it is empty.
\end{proof}

For any $\beta \in \CC^d$, the \emph{characteristic variety} of $H(\beta)$ is
\vspace{-1mm}
\begin{equation}
\charVar(H(\beta)) \defeq \Var(\gr^F(H(\beta))) \subseteq T^*\CC^n\cong\CC^{2n}, 
\end{equation}
where $F$ is the order filtration on $D$, given by the order of differential operators.
The \emph{singular locus} of $H(\beta)$, denoted $\Sing(H(\beta))$, 
is the image under the projection $T^*\CC^n\to\CC^n$ given by $(x,\xi)\mapsto x$ of $\charVar(H(\beta))
\minus \Var(\xi_1,\dots,\xi_n)$. 
Given a generic nonsingular point $\bar{x}
\in \CC^n$ for $H(\beta)$, denote by $\mathcal{O}^{\rm an}_{\CC^n,\bar{x}}$ the 
space of multivalued germs of analytic functions at $\bar{x}$. 
The $\CC$-vector space 
\[
\Sol_{\bar{x}}(H(\beta)) \defeq 
\{ \Phi\in\mathcal{O}^{{\rm an}}_{\CC^n,\bar{x}} 
\mid Q\bullet\Phi = 0\text{ for all } Q\in H(\beta)\}
\]
is the \emph{solution space of $H(\beta)$ at $\bar{x}$}. 

We introduce one more notion before stating the main result of this section. 
Given a countable, locally finite union $\sL$ of
linear varieties in $\CC^d$, let $\cS(\sL)$ denote the natural stratification of $\CC^d$ induced by the flats of $\sL$. In other words, the closure of a stratum of $\cS(\sL)$ is an intersection of irreducible components of $\sL$, and $\cS(\sL)$ is the coarsest possible stratification of $\CC^d$ with this property.

\begin{theorem}
\label{thm:ParDerSolutionSheaf}
Consider a system $H(\beta)\subseteq D[\beta]$ of linear partial differential 
equations 
of the form~\eqref{eqn:ParametricDerivativesPDE}. 
Suppose that there exists a countable, locally finite union $\sL$ of
linear varieties in $\CC^d$ and an open set $V\subseteq\CC^n$,
disjoint from $\Sing(H(\beta))$ for all $\beta$, such that 
\[
\dim_{\CC}({\rm Sol}_{\bar{x}}(H(\beta))) = r < \infty\text{ for all }\beta\notin \sL \text{ and } x\in V.
\] 
Suppose further that there exists a set of functions that are locally analytic
and form a basis of $\Sol_x(H(\beta))$ for $(x,\beta)\in V\times(\CC^d\minus \sL)$. 
Then along each stratum $S\in\cS(\sL)$, and for any $x\in V$, there is an 
$r$-dimensional subspace of $\Sol_x(H(\beta))$ which, when restricted to 
$S$, is locally analytic in $\beta$.
\end{theorem}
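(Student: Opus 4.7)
The plan is to induct on the number of components of $\sL$ whose closures contain the stratum $S$, using Propositions~\ref{pro:ParametricDerivatives} and~\ref{pro:NoBadPoints} at each step to convert a meromorphic family of solutions defined off a single hyperplane into a locally analytic family defined across it. The base case is immediate: when $\codim S=0$ the stratum lies in $\CC^d\minus \sL$, and the hypothesized basis $\Phi_1,\ldots,\Phi_r$ already furnishes the required locally analytic family on $V\times S$.

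For the inductive step, fix $\bar{\beta}\in S$ and, by local finiteness of $\sL$, shrink to an open neighborhood $\cU$ of $\bar{\beta}$ in which $\sL\cap \cU$ is a finite central arrangement of linear subvarieties through $\bar{\beta}$. Pick a hyperplane $L$ in this arrangement whose closure contains $S$, and let $S'\in\cS(\sL)$ be the stratum of codimension one less obtained by removing $L$ from the defining flats of $S$, so that $S\subseteq \overline{S'}$. The inductive hypothesis furnishes $r$ solutions $\Phi_1',\ldots,\Phi_r'$ locally analytic on $V\times(S'\cap \cU)$. Let $\ell$ be a linear form defining $L$ and let $q_i$ denote the order of the pole of $\Phi_i'$ along $L$, with the convention $q_i=0$ when $\Phi_i'$ extends analytically across $L$. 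Then $\ell^{q_i}\Phi_i'$ is locally analytic on a full neighborhood of $L\cap \cU$ and vanishes identically on $L\cap \cU$. Choosing $\gamma$ transverse to $L$, Proposition~\ref{pro:ParametricDerivatives} produces a nonvanishing solution $\Psi_i\defeq \nabla_\gamma^{(q_i)}(\ell^{q_i}\Phi_i')$ of $H(\beta)$ for $\beta\in L\cap \cU$ that is locally analytic in a full neighborhood of $L\cap \cU$, while Proposition~\ref{pro:NoBadPoints}, via l'H\^opital's rule, identifies $\Psi_i$ (up to a nonzero constant) with the analytic continuation of $\Phi_i'$ itself across $L$.

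Iterating the construction hyperplane by hyperplane yields $r$ solutions locally analytic on $V\times(S\cap \cU)$. They remain linearly independent because each step is invertible away from the divisor being cleared and the resulting functions analytically continue the original basis from the open stratum, and they glue canonically along $S$ by uniqueness of analytic continuation. The main obstacle, and the step that will require the most care, is ensuring that each $\Phi_i'$ genuinely extends meromorphically (rather than with essential singularities) across every hyperplane $L\subseteq\sL$; the hypotheses of Propositions~\ref{pro:ParametricDerivatives} and~\ref{pro:NoBadPoints} silently build this in, so it must be established independently, either by appealing to regular holonomicity of $H(\beta)$ or by invoking the explicit structure of the basis, as with the Gamma-factor poles of extended Euler--Mellin integrals from Section~\ref{sec:extend}.
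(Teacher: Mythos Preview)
Your proposal has a genuine gap in how it handles linear dependence along a hyperplane. You process each $\Phi_i'$ individually, clearing its ``pole'' of order $q_i$ and then differentiating. But the hypothesis of the theorem gives an \emph{analytic} basis $\Phi_1,\ldots,\Phi_r$ on $V\times(\CC^d\minus\sL)$, not a meromorphic one; so at the first step all $q_i=0$, your procedure does nothing, and you are simply restricting $\Phi_1,\ldots,\Phi_r$ to $L$. The obstruction there is not poles but the possibility that these restrictions become linearly \emph{dependent} along $L$ (this is exactly what happens in Theorem~\ref{thm:genericResPolar}, where all $\vol(A)$ extended Euler--Mellin integrals collapse to a single function along a polar line). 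Your argument never produces a replacement for the lost solutions, so the claimed linear independence fails. The paper's proof addresses this directly: whenever a nontrivial combination $\Phi=\sum c_i\Phi_i$ vanishes identically on $L$, one applies Proposition~\ref{pro:ParametricDerivatives} to $\Phi$ itself (not to the individual $\Phi_i$), obtaining a new nontrivial solution $\nabla_\gamma^{(q)}\Phi$ on $L$; iterating over all dependence relations restores a full set of $r$ independent solutions on $L\minus\sL_L$, and then one inducts on $d$ inside $L\cong\CC^{d-1}$.

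Two smaller points. First, your claim that the resulting families ``glue canonically along $S$ by uniqueness of analytic continuation'' is not correct and is not needed: the bases obtained from different adjacent hyperplanes need not agree on $S$, and the theorem only asserts the existence of \emph{some} $r$-dimensional locally analytic subspace along each stratum, so one simply chooses one. Second, the ``main obstacle'' you flag at the end---meromorphicity across $L$---is a non-issue under the stated hypotheses, since the basis is assumed analytic on $V\times(\CC^d\minus\sL)$; the real obstacle is the dependence phenomenon above.
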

\begin{proof}
Let $\Phi_1(x,\beta), \dots, \Phi_r(x,\beta)$ be a collection of solutions
that are locally analytic and form a basis of $\Sol_x(H(\beta))$
for $(x,\beta)\in V\times (\CC^d\setminus \sL)$.

We first claim that it is enough to consider the 
case that $\sL$ is a union of hyperplanes. 
To see this, let  $K \subset \sL$ be an irreducible 
component of $\sL$ of codimension at least two, and 
let $K_0$ denote the unique maximal stratum of 
$\cS(\sL)$ that is contained in $K$. 
Since $\sL$ is locally finite,
$K_0$ is a nonempty Zariski open subset of $K$.
Note that for any $\beta\in K_0$, there exists an 
analytic neighborhood $N$ of $\beta$ such that 
$N\cap K_0 = N\cap K$ and furthermore,
any linear combination of the functions 
$\Phi_1, \dots, \Phi_r$ is analytic and
nonvanishing for $\beta\in N\minus K$. 
Since $K$ is of codimension at least two, 
we conclude that any linear combination of
$\Phi_1,\dots, \Phi_r$ has a unique nonvanishing analytic continuation
to $N$. 

We may thus assume that $\sL$ is a union of hyperplanes. To proceed with the proof, 
we use induction on $d$. The base case $d = 0$ is trivial. 
Thus, assume now that the statement is proven for $d < k$, and consider the case $d = k$.

Proposition~\ref{pro:ParametricDerivatives} can be applied to a collection 
of linearly independent solutions of the system $H(\beta)$ for all 
$\beta\in\CC^k\minus \sL$, as follows. 
If there is a linear combination
\[
\Phi(x,\beta) = \sum_{i=1}^r c_i \Phi_i(x, \beta)
\]
that encodes a linear dependence relation valid when $\beta \in L$, 
for some linear space $L\subseteq \sL$, then Proposition~\ref{pro:ParametricDerivatives} can be applied to the solution $\Phi(x, \beta)$. 
Further, if there are several linear dependencies that are valid along $L$, then Proposition~\ref{pro:ParametricDerivatives} can be applied repeatedly. 

Consider a hyperplane $L \subset \sL$.
Let ${\sL}_L \subset L$ denote the set
\[
\sL_L = \bigcup_{\substack{S \in \cS(\sL)\\ \overline{S}
\subsetneq L}} S.
\]
Note that $\sL_L$ is a countable, locally finite union of hyperplanes in 
$L \cong \CC^{d-1}$.
For a fixed $\bar{\beta}$ contained $L\minus \sL_L$, 
Proposition~\ref{pro:NoBadPoints} shows that the solutions obtained 
using Proposition~\ref{pro:ParametricDerivatives} do not vanish identically 
in $x$ at $\bar{\beta}$. Thus, by Proposition~\ref{pro:ParametricDerivatives}, 
we obtain $r$ locally analytic functions which are nontrivial and 
linearly independent for $\beta \in L\minus \sL_L$. 
Furthermore, the polynomial functions
$\varphi_i(\beta)$, $i=1, \dots, m$,
are still algebraic when restricted to the 
hyperplane $L$. In particular, by induction, 
for each stratum $S \in \cS(\sL_L)$, there is a set of $r$ linearly independent elements of $\Sol_x(H(\beta))$ for each $\beta \in S$, which are locally analytic. 

Consider a stratum $S\in \cS(\sL)$ of codimension $c>1$. 
Such a stratum is the intersection of $L_1,\dots,L_c$, which are hyperplanes in $\sL$. 
By the previous paragraph, along each $L_i$, there is an $r$-dimensional solution space of $H(\beta)$ at $x\in V$ for all $\beta\in L_i$. 
In general, these solution spaces need not coincide along $S$; 
however, to complete this proof, it
suffices to choose one of the available solution spaces for each such 
stratum $S$.
\end{proof}

\begin{theorem}
\label{thm:SolutionsAnalyticStratification}
Assume that the $A$-hypergeometric system $H_A(\beta)$ is such that the set of rank-jumping parameters $\cE_A$ is at most zero-dimensional. 
Then there is a stratification of $\CC^d$ such that for each stratum $S$, the solutions of $H_A(\beta)$ are locally analytic as functions of $x$ and $\beta$ in $(\CC^n \minus \Sigma_A)\times S$.
\end{theorem}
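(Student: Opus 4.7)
The plan is to apply Theorem \ref{thm:ParDerSolutionSheaf} to the $A$-hypergeometric system. First I check that $H_A(\beta)$ fits the framework of \eqref{eqn:ParametricDerivativesPDE}: the generators of the toric ideal $I_A$ are operators $P(x,\del)\in D$ with no $\beta$-dependence, and each Euler equation has the form $E_i\bullet \Phi=\beta_i\,\Phi$, so one may take $\varphi_i(\beta)=\beta_i$, which is polynomial in $\beta$. The singular locus $\Sing(H_A(\beta))=\Sigma_A$ is independent of $\beta$, so $V\defeq \CC^n\minus\Sigma_A$ is disjoint from $\Sing(H_A(\beta))$ for every $\beta\in\CC^d$.

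I then set $r=\vol(A)$ and $\sL=\ResArr(A)\cup\cE_A$. By \eqref{eqn:ResArr}, $\ResArr(A)$ is a countable, locally finite union of affine-linear subspaces of $\CC^d$, and since $\cE_A$ is at most zero-dimensional by hypothesis, adjoining it preserves local finiteness and keeps $\sL$ a union of linear varieties. For $\beta\in\CC^d\minus\sL$ the parameter is nonresonant, so $\rank H_A(\beta)=\vol(A)=r$ by \cite{GGZ,GKZ,adolphson}. A basis of $\Sol_x(H_A(\beta))$ which is locally analytic on $V\times(\CC^d\minus\sL)$ is provided by the $\Gamma$-series solutions of \cite{GKZ,SST}: on each convergence domain indexed by a generic weight vector, the associated $\Gamma$-series forms a basis of $\Sol_x(H_A(\beta))$ whose coefficients depend holomorphically on $\beta$ wherever the relevant $\Gamma$-functions avoid their poles. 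These pole loci are unions of resonant hyperplanes, so restricting to $\CC^d\minus\sL$ keeps the series analytic in $\beta$. Covering $V$ by finitely many such convergence regions and analytically continuing across overlaps yields the required globally defined locally analytic basis.

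With these data, Theorem \ref{thm:ParDerSolutionSheaf} produces the stratification $\cS(\sL)$ together with, for each stratum $S$ and each $x\in V$, an $r$-dimensional subspace of $\Sol_x(H_A(\beta))$ that depends locally analytically on $\beta\in S$. Because $\cE_A$ is zero-dimensional, every positive-dimensional stratum of $\cS(\sL)$ is disjoint from $\cE_A$; there $\rank H_A(\beta)=r$ pointwise, so the $r$-dimensional subspace furnished by the theorem \emph{is} $\Sol_x(H_A(\beta))$ in full, and every solution of $H_A(\beta)$ varies locally analytically in $(x,\beta)$ along the stratum. On a zero-dimensional stratum $S=\{\bar\beta\}$---including the points of $\cE_A$ where the rank may strictly exceed $r$---the condition ``locally analytic in $\beta$'' imposes nothing beyond local analyticity in $x$, which holds on $V$ by the definition of solution.

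The main obstacle I anticipate is the global basis construction on $V\times(\CC^d\minus\sL)$: the individual $\Gamma$-series have distinct convergence domains, and one must verify that analytic continuation across overlaps preserves holomorphic dependence on $\beta$, so that the hypothesis of Theorem \ref{thm:ParDerSolutionSheaf} is literally satisfied rather than only locally. In the projective curve setting treated in subsequent sections this bookkeeping is bypassed by the cleaner Theorem \ref{thm:EMindep} via extended Euler--Mellin integrals; in the general projective setting it is the standard but technical ingredient.
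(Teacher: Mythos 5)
Your overall strategy coincides with the paper's: verify that $H_A(\beta)$ has the form~\eqref{eqn:ParametricDerivativesPDE}, feed it into Theorem~\ref{thm:ParDerSolutionSheaf} with $r=\vol(A)$ and $\sL$ essentially equal to $\ResArr(A)$, and then observe that the zero-dimensional strata containing $\cE_A$ require no statement about analyticity in $\beta$. The one substantive divergence is how you verify the key hypothesis of Theorem~\ref{thm:ParDerSolutionSheaf}, namely the existence of a basis of $\Sol_x(H_A(\beta))$ that is locally analytic on $V\times(\CC^d\minus\sL)$. You use $\Gamma$-series, and here there is a genuine gap that you yourself flag but do not close: the assertion that a $\Gamma$-series depends holomorphically on $\beta$ wherever its coefficients avoid their poles requires locally uniform (in $\beta$) convergence of the series, which is not contained in~\cite[Theorem~2.5.16]{SST}. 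Establishing such uniformity is exactly the content of the delicate Pochhammer estimates of Theorem~\ref{thm:analyticSeries}, which the paper carries out only for curves; Remark~\ref{rmk:seriesObstacles} indicates why the general higher-dimensional case is harder. (The patching across convergence domains that you worry about at the end is actually a non-issue: Theorem~\ref{thm:ParDerSolutionSheaf} only requires a single open set $V$, so one fixed weight vector suffices. The uniformity in $\beta$ is the real problem.)

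The paper sidesteps all of this by taking, at nonresonant $\beta$, the basis of Euler-type integrals over compact cycles from~\cite[Theorem~2.10]{GKZ90}. Since the cycles can be chosen uniformly in $\beta$ and the integrand is entire in $\beta$, these solutions are automatically entire in the parameters, and the hypothesis of Theorem~\ref{thm:ParDerSolutionSheaf} is immediate. If you replace your $\Gamma$-series input with these integrals (or, in the curve case, with the extended Euler--Mellin integrals of Theorem~\ref{thm:EMindep}), the rest of your argument goes through verbatim; as written, the $\beta$-analyticity of the series basis is an unproven step.
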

\begin{proof}
At nonresonant parameters $\beta$, there is a basis of the solution space of $H_A(\beta)$ given by Euler-type integrals over compact cycles~\cite[Theorem~2.10]{GKZ90}. 
These compact cycles can be chosen uniformly for all parameters, so the corresponding integrals yield entire functions of the parameter $\beta$ (cf. Section~\ref{sec:EM:genericResonant}). 
Applying Theorem~\ref{thm:ParDerSolutionSheaf} yields for each stratum $S\in\cS(\ResArr(A))$, a set of $\vol(A)$-many linearly independent solutions of $H_A(\beta)$ that are locally analytic
for $(x,\beta)\in(\CC^n \minus \Sigma_A)\times S$. 

The proof is now complete if the set of rank-jumping parameters $\cE_A$ is empty. 
If not, refine $\cS(\ResArr(A))$ by placing each rank-jumping parameter in its own stratum. 
Then at each $\beta\in\cE_A$, complete a basis of elements of $\Sol_x(H_A(\beta))$ for $x\in\CC^n \minus \Sigma_A$, as there is nothing to prove regarding analyticity in $\beta$ for such parameters because $\cE_A$ is zero-dimensional.
\end{proof}

\begin{corollary}
\label{cor:sheafCM}
If $\CC[\del]/I_A$ is Cohen--Macaulay, then for any stratum $S\in\cS(\ResArr(A))$, the solutions of $H_A(\beta)$ are locally analytic as functions of $x$ and $\beta$ in $(\CC^n\minus\Sigma_A)\times S$. 
\end{corollary}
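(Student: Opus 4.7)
The plan is to deduce the corollary directly from Theorem~\ref{thm:SolutionsAnalyticStratification} by verifying its hypothesis under the Cohen--Macaulay assumption. The theorem produces the desired stratification whenever $\cE_A$ is at most zero-dimensional, so the task reduces to confirming that this holds, and indeed holds trivially, in the Cohen--Macaulay setting.

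First, I would invoke the main result of \cite{MMW}, which asserts that if $\CC[\del]/I_A$ is Cohen--Macaulay, then $\rank H_A(\beta) = \vol(A)$ for every parameter $\beta \in \CC^d$; equivalently, $\cE_A = \varnothing$. This fact is already quoted in the introduction, in the paragraph comparing the projective and affine toric curve cases. Since the empty set is a fortiori at most zero-dimensional, the hypothesis of Theorem~\ref{thm:SolutionsAnalyticStratification} is satisfied.

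Second, I would invoke Theorem~\ref{thm:SolutionsAnalyticStratification} to obtain, on each stratum $S$ of a suitable stratification of $\CC^d$, a collection of $\vol(A)$ linearly independent solutions of $H_A(\beta)$ which depend locally analytically on $(x,\beta) \in (\CC^n \minus \Sigma_A) \times S$. Inspecting that proof, the stratification produced is $\cS(\ResArr(A))$, possibly further refined by inserting the points of $\cE_A$ as zero-dimensional strata. Under the Cohen--Macaulay hypothesis this refinement is vacuous, so the stratification is exactly $\cS(\ResArr(A))$. Finally, since $\rank H_A(\beta) = \vol(A)$ everywhere, the $\vol(A)$ locally analytic solutions on each stratum exhaust the full solution space, giving the claim.

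There is essentially no obstacle here: the substance of the argument lives in \cite{MMW}, which provides the vanishing $\cE_A=\varnothing$, and in Theorem~\ref{thm:SolutionsAnalyticStratification}, which packages the parametric derivative construction into an analytic family on each stratum. The only point requiring a moment's care is matching the stratification output by Theorem~\ref{thm:SolutionsAnalyticStratification} with $\cS(\ResArr(A))$ itself, which is immediate once one observes that the refinement in that proof was introduced solely to accommodate rank jumps.
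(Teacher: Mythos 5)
Your proposal is correct and follows exactly the paper's own argument: cite \cite{MMW} to conclude that the Cohen--Macaulay hypothesis forces $\cE_A=\varnothing$, then apply Theorem~\ref{thm:SolutionsAnalyticStratification}, whose stratification is precisely $\cS(\ResArr(A))$ since no refinement at rank-jumping parameters is needed. Your extra remark about matching the stratifications is a sensible clarification but adds nothing beyond what the paper's two-line proof already implies.
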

\begin{proof}
When $\CC[\del]/I_A$ is Cohen--Macaulay, $H_A(\beta)$ has no rank jumping parameters~\cite[Corollary~9.2]{MMW}. Thus the result follows immediately from Theorem~\ref{thm:SolutionsAnalyticStratification}. 
\end{proof}

Returning to the case of projective toric curves, the following is another consequence of Theorem~\ref{thm:SolutionsAnalyticStratification}.

\begin{corollary}
\label{cor:sheaf}
Assume that $A$ has the form~\eqref{eqn:A}. Let $\mathcal{C}$ be the
subset of $\ResArr(A)$ consisting of points lying on two resonant
lines, and form a decomposition $\CC^2 = (\CC^2 \minus \ResArr(A) )
\cup (\ResArr(A) \minus \mathcal{C} ) \cup \mathcal{C}$.
Then the solutions of $H_A(\beta)$ are locally analytic as functions
of $x$ and $\beta$ on $(\CC^n\minus \Sigma_A) \times (\CC^2 \minus
\ResArr(A))$ and also on $(\CC^n\minus \Sigma_A) \times (\ResArr(A)
\minus \mathcal{C})$.
\end{corollary}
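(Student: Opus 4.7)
The plan is to derive the corollary as a direct specialization of Theorem~\ref{thm:SolutionsAnalyticStratification}. The first step is to verify the hypothesis of that theorem, namely that the rank-jumping locus $\cE_A$ is at most zero-dimensional. For $A$ of the form~\eqref{eqn:A}, this is exactly the content of~\eqref{eqn:EAforCurves}, which asserts (following~\cite{CDD}) that $\cE_A$ is a finite set of points.

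Next, I would match the decomposition of $\CC^2$ appearing in the corollary with the natural stratification $\cS(\ResArr(A))$ produced by Theorem~\ref{thm:SolutionsAnalyticStratification}. Since $\ResArr(A)$ is a locally finite union of lines in $\CC^2$, the strata of $\cS(\ResArr(A))$ are: the open stratum $\CC^2\minus\ResArr(A)$; the one-dimensional strata, each of which is a connected component of a resonant line with its intersection points with other resonant lines removed; and the zero-dimensional strata, which are precisely the points of $\mathcal{C}$. In particular, the union of all one-dimensional strata equals $\ResArr(A)\minus\mathcal{C}$.

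Applying Theorem~\ref{thm:SolutionsAnalyticStratification} to the open stratum yields local analyticity of the solutions on $(\CC^n\minus\Sigma_A)\times(\CC^2\minus\ResArr(A))$. Since local analyticity is a local property and the one-dimensional strata form a disjoint, locally finite family of relatively open subsets of $\ResArr(A)\minus\mathcal{C}$, applying the theorem stratum by stratum and unioning the resulting solution sheaves produces local analyticity on $(\CC^n\minus\Sigma_A)\times(\ResArr(A)\minus\mathcal{C})$, as required.

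The only subtlety is that Theorem~\ref{thm:SolutionsAnalyticStratification} may refine $\cS(\ResArr(A))$ further by isolating each rank-jumping parameter as its own stratum; however, by~\eqref{eqn:EAforCurves} every point of $\cE_A$ lies in the intersection of a line in $\NN A+\ZZ\fface$ with a line in $\NN A+\ZZ\lface$, hence belongs to $\mathcal{C}$. Consequently this refinement is confined to $\mathcal{C}$ and does not affect either of the two regions appearing in the corollary. No genuine obstacle arises; the argument is essentially bookkeeping that identifies the strata of $\cS(\ResArr(A))$ with the pieces of the prescribed decomposition and then invokes the previous theorem.
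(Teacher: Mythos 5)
Your proposal is correct and follows the paper's intended route: the paper states this corollary as an immediate consequence of Theorem~\ref{thm:SolutionsAnalyticStratification} (using the finiteness of $\cE_A$ from~\eqref{eqn:EAforCurves} and the fact that $\cE_A\subseteq\ZZ^2\subseteq\mathcal{C}$), giving no further argument. Your write-up simply makes explicit the identification of the strata of $\cS(\ResArr(A))$ with the pieces of the stated decomposition, which is exactly the bookkeeping the paper leaves to the reader.
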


In Corollary~\ref{cor:sheaf}, note that it might be possible to take a strictly smaller union of lines, as we show in Theorem~\ref{thm:bigSheaf}. 

\begin{remark}
\label{rmk:higherDimensions}
If $\cE_A$ is not zero-dimensional, 
then Theorem~\ref{thm:ParDerSolutionSheaf} 
cannot be used to explain how to create (and control with respect to $\beta$)
additional linearly independent solutions along higher-dimensional components of $\cE_A$. 
Further tools are being developed to address this issue.
\endrk 
\end{remark}

Although Corollary~\ref{cor:sheaf} provides information on the
parametric behavior of the solutions of a hypergeometric system
associated to a projective toric curve, this information is not very
explicit, and in particular, does not involve the formation of rank jumps. 
In the upcoming sections, we work towards the significantly stronger
Theorems~\ref{thm:main} and~\ref{thm:bigSheaf}.

\section{Resonant parameters and Euler--Mellin integrals}
\label{sec:EM:genericResonant}

In this section, we study the behavior of extended Euler--Mellin integrals at parameters that are resonant with respect to precisely one facet of $A$, see Definition~\ref{def:resonant}. 
We obtain an explicit understanding of how these solutions behave along a line in $\Pol(A)$,
while the behavior along nonpolar lines seems to be fundamentally different.

We first consider the case that $\beta$ is polar, where the behavior is illustrated by the following example.
A series solution $\sum_{v\in \CC^n} c_v x^v$ of $H_A(\beta)$ is said to have \emph{finite support} if the set $\{v\in \CC^n\mid c_v\neq 0\}$ is finite. 

\begin{example}
\label{ex:SupportingHyperplane}
For any $A$ as in~\eqref{eqn:A}, consider the supporting 
line of the cone~\eqref{eqn:ConvergenceDomain} given by 
$\beta_2=0$, which is contained in $\Pol(A)$. In order to 
evaluate $\Psi_f^\Theta$, we must expand $M_f^\Theta(x, \beta)$ once 
in the direction of $\mu_0$, which yields 
\[
M_f^\Theta(x,\beta)
= \frac{\beta_1}{\beta_2} \int_{\Arg^{-1}(\theta)} z^{-\beta_2+1} \frac{f'(z)}{f(z)^{-\beta_1+1}}\,\frac{dz}{z},
\]
for $\theta\in\Theta$. 
Hence, along the line $\beta_2 = 0$, the Euler--Mellin integral for $\Theta$ evaluates, after applying an extension formula once, as follows:
\[
\left.\frac{M_f^\Theta(x,\beta)}{\Gamma(\beta_2)}\right|_{\beta_2 = 0} 
= \beta_1 \int_{\Arg^{-1}(\theta)} \frac{f'(z)}{f(z)^{-\beta_1+1}}\, dz 
= \beta_1 f(0)^{\beta_1} = \beta_1 x_1^{\beta_1}.
\]
Most notably, $\Psi_f^\Theta(\beta_1,0)$ is independent of $\Theta$ and equal to a series solution of $H_A(\beta)$ with finite support.
\endrk
\end{example}

\begin{theorem}
\label{thm:genericResPolar}
If $L$ is a line contained in $\Pol(A)$, then for all $\beta\in L$, all extended Euler--Mellin integrals $\Psi_f^\Theta(x, \beta)$ coincide and evaluate to a 
finitely supported series solution of $H_A(\beta)$. 
After possibly removing a factor which is constant with respect to $x$, this series is nonvanishing in $x$ for each $\beta$ and
varies locally analytically on $(\CC^n\minus\Var(x_1x_n))\times L$. 
\end{theorem}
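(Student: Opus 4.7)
The plan is to evaluate $\Psi_f^\Theta|_L$ explicitly by iterating the extension formulas and identifying the surviving contributions after the pole-cancelling factors from~\eqref{eq:extendedEM} have been applied. By the $z\mapsto 1/z$ symmetry of the setup, which exchanges the facets $\{0\}$ and $\{k\}$ of $\New(f)$, it suffices to treat lines $L$ polar with respect to $\gamma_0$, so $L=\{\beta\in\CC^2:-(\gamma_0\cdot\beta)=\kappa\}$ for some $\kappa\in G_0$. The picture is already visible in Example~\ref{ex:SupportingHyperplane}, which handles $\kappa=0$: a single application of~\eqref{eqn:ExtensionFormulas-1} rewrites the integrand as an exact form, and integration over the ray $\Arg^{-1}(\theta)$ extracts only the boundary value $f(0)^{\beta_1}=x_1^{\beta_1}$, independent of $\theta$.

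First I would iterate~\eqref{eqn:ExtensionFormulas-1} some $N$ times, with $N$ large enough that each shifted parameter $\beta-\sum_{j=1}^N a_{i_j}$ lies in the convergence region~\eqref{eqn:ConvergenceDomain} near a basepoint of $L$. This expresses $M_f^\Theta(x,\beta)$ as a finite sum indexed by chains $(i_1,\dots,i_N)\in\{2,\dots,n\}^N$ (terms with $i_j=1$ vanish because $k_1=0$), whose coefficients are products of reciprocals of linear forms $-(\gamma_0\cdot\beta)+m_t$ for partial sums $m_t=k_{i_1}+\cdots+k_{i_t}$. Passing from $M_f^\Theta$ to $\Psi_f^\Theta$ via~\eqref{eq:extendedEM} introduces the factor $1/\Gamma(-\gamma_0\cdot\beta)$, which vanishes simply on $L$ (since $\kappa\in\NN$), while the pole-cancelling product $\prod_{\kappa'\in\NN\minus G_0}(-(\gamma_0\cdot\beta)-\kappa')$ is nonzero on $L$ (since $\kappa\in G_0$). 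It follows that, upon restriction to $L$, only those chains whose partial-sum sequence $(m_0,m_1,\dots,m_{N-1})$ meets $\kappa$ contribute; the remaining chains have a regular coefficient on $L$ and are killed by the simple zero from $1/\Gamma$.

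For each surviving chain, let $t$ be the smallest index with $m_t=\kappa$; since the $k_{i_j}$ are positive integers bounded below, $t$ is bounded, so only finitely many chains survive. The residue of the chain's coefficient on $L$ combines with the intermediate integral $M_f^\Theta(x,\beta-\sum_{j\le t}a_{i_j})$, which sits at a parameter on the line $\{\beta_2=0\}$, and the argument of Example~\ref{ex:SupportingHyperplane} applies verbatim to evaluate it as a monomial in $x_1^{\beta_1-t}$ times a polynomial prefactor in $\beta$. Assembling, $\Psi_f^\Theta(x,\beta)|_L$ becomes a finite $\CC$-linear combination of monomials $x_1^{\beta_1-t}\,x_{i_1}\cdots x_{i_t}$ with coefficients polynomial in $\beta$, hence a finitely supported series solution of $H_A(\beta)$. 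Independence of $\Theta$ is automatic because the only $\Theta$-dependent step, namely the boundary evaluation at $z=0$, yields $x_1^{\beta_1-t}$ regardless of the ray chosen.

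After extracting the overall polynomial factor in $\beta$ (the analogue of $\beta_1$ in Example~\ref{ex:SupportingHyperplane}), the remaining function is a nonzero finite sum of monomials in $x$ with entire coefficients in $\beta$, hence jointly locally analytic on $(\CC^n\minus\Var(x_1x_n))\times L$, and nonvanishing in $x$ since at least one surviving monomial has coefficient that is nonzero on a Zariski-dense subset of $L$. The main obstacle is the combinatorial bookkeeping: verifying that the $1/\Gamma(-\gamma_0\cdot\beta)$ zero on $L$ precisely kills the chains whose partial sums miss $\kappa$, and that the residues from the surviving chains assemble into a single finite sum independent of the auxiliary iteration length $N$. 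A secondary issue is controlling the boundary contribution at $z=\infty$ during each extension step, which should vanish throughout by the convergence hypotheses underlying Theorem~\ref{thm:BFP:EMconverge}.
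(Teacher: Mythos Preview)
Your approach is essentially the one taken in the paper: iterate the extension formula~\eqref{eqn:ExtensionFormulas-1}, observe that upon restriction to $L$ only those chains whose partial-sum sequence hits the pole value survive (the paper indexes these as ordered partitions of $N$ and records the result in the closed formulas~\eqref{eqn:SolutionAlongPolar} and~\eqref{eqn:SolutionAlongPolar2}), and evaluate each surviving term via the boundary computation of Example~\ref{ex:SupportingHyperplane}. Independence of $\Theta$, finite support, and local analyticity all follow exactly as you describe.

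The one genuine gap is your nonvanishing claim. Saying that ``at least one surviving monomial has coefficient that is nonzero on a Zariski-dense subset of $L$'' establishes only that the series is nonvanishing in $x$ for \emph{generic} $\beta\in L$; the theorem requires nonvanishing at \emph{every} $\beta\in L$ after dividing out a single factor constant in $x$. To close this you must identify that factor. The paper does so via its explicit formula: parametrizing $L$ by $\lambda$, the coefficient attached to an (unordered) partition of length $m$ is a positive rational multiple of $\prod_{i=1}^{m-1}(\lambda-i)$, so the only common vanishing comes from the shared linear factors $(\lambda-1),\dots,(\lambda-m_{\min}+1)$, and after removing these the monomials of minimal partition length acquire nonzero constant coefficients. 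Without the explicit formula you could argue abstractly---the coefficients are polynomials in the single coordinate $\lambda$ on $L$, and in one variable the quotients by their polynomial GCD have no common zero---but you still need to verify that the residues you extract are \emph{polynomial} (not merely entire) in $\lambda$, which is not obvious from your sketch and is precisely what the explicit bookkeeping you flag as ``the main obstacle'' delivers.
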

\begin{proof}
It is enough to consider $x\in V$ as in Theorem~\ref{thm:EMindep}. 
We first consider the case $\beta_2 = N$, for some integer $N$, so that $L = L_0(N)$ is an integer translate of the span of the face $\left\{\fface\right\}$. 
From~\eqref{eq:extendedEM}, since $L\subseteq\Pol(A)$, there exists a partition of $N$ using only $k_2, \dots, k_n$. 
Let $P(A,N)$ denote the set of all ordered partitions $p$ of $N$ with parts $k_2, \dots, k_n$. 
Let $m_i(p)$ denote the number of times $k_i$ appears in $p$, and set $m(p)\defeq(m_1(p), \dots, m_n(p))$, so that $|m(p)|$ is the length of $p$. 
Furthermore, for each $i\in \{1,\dots, |m(p)|\}$, let $s_i \defeq s_i(p)$ denote the $i$th partial sum of $p$. 
Now parameterize the line 
$L_0(N) = \{\beta\in\CC^2 \mid \beta_2 = N\}$ by
\[
\lambda \mapsto (\lambda, N) \quad\text{for}\quad \lambda \in \CC.
\] 
We claim that, for each connected component $\Theta$ of the complement of $\sC(f)$, the Euler--Mellin integral for $\Theta$ evaluates, after applying the extension formulas, to
 \begin{equation}
 \label{eqn:SolutionAlongPolar}
  \left.\frac{M_f^\Theta(x,\beta)}
  {\Gamma(\beta_2)}\right|_{L_0(N)}  
  = x_1^{\lambda} \sum_{p\in P(A,N)}
  \left(\prod_{i=1}^{|m(p)|-1} \frac{\lambda - i}{N-s_i(p)}\right)
  \left(\prod_{i=2}^{n}k_i^{m_i(p)}\left(\frac{x_i}{x_1}\right)^{m_i(p)}\right).
 \end{equation}
Indeed, the factor $\Gamma(\beta_2)$ in~\eqref{eq:extendedEM} implies that the only terms in
the expansion of $M_f^\Theta(x,\beta)$ that are relevant at $\beta_2 =
N$ are those containing the factor $\beta_2 - N$ in the denominators of
their coefficients. 
This is the case only for terms corresponding to ordered partitions of $N$, and the term corresponding to $p\in P(A,N)$, when restricted to $L_0(N)$, is the one given in~\eqref{eqn:SolutionAlongPolar}, including the monomial $x^{m(p)}$. 
The integral of this term evaluates, in the manner of Example~\ref{ex:SupportingHyperplane}, to $x_1^{\beta_1-|m(p)|}$. 

By symmetry, a similar formula can be given 
when 
$k\beta_1-\beta_2 = N$, so that $L = L_k(N)$ is a translate of the span of the face 
$\left\{\lface\right\}$. In this case, consider the parameterization of $L_k(N)$ given by 
\[
\lambda \mapsto (\lambda, k\lambda + N) \quad\text{for}\quad \lambda \in \CC. 
\]
The analog of~\eqref{eqn:SolutionAlongPolar} for this line is 
\begin{equation}
\label{eqn:SolutionAlongPolar2}
  \left.\frac{M_f^\Theta(x,\beta)}
  {\Gamma(k\beta_1-\beta_2)}\right|_{L_k(N)}  
= x_n^{\lambda} \sum_{p\in  P(\bar A,N)}
  \left(\prod_{i=1}^{|m(p)|-1}
  \frac{\lambda - i}{N-s_i(p)}\right)\, 
 \left(\prod_{i=1}^{n-1} (k-k_i)^{m_i(p)} \left(\frac{x_i}{x_n}\right)^{m_i(p)}\right).
\end{equation}

Now the coefficient of a monomial $x_1^{\lambda - |m_p|} \prod_{i=2}^n x_i^{m_i(p)}$ in \eqref{eqn:SolutionAlongPolar} or $x_n^{\lambda - |m_p|} \prod_{i=2}^{n-1} x_i^{m_i(p)}$ in \eqref{eqn:SolutionAlongPolar2} is a sum of positive multiples of the product
\begin{equation}
\label{eqn:CoeffProduct}
\prod_{i = 1}^{|m(p)|-1}(\lambda - i), 
\end{equation}
and hence it is a positive multiple of \eqref{eqn:CoeffProduct}; in particular, the coefficient of a monomial vanishes only if
the product \eqref{eqn:CoeffProduct} vanishes.
Thus a function of the form~\eqref{eqn:SolutionAlongPolar} or~\eqref{eqn:SolutionAlongPolar2} is nontrivial unless $\lambda$ is such that all the coefficients vanish, which is equivalent to $\lambda$ being a positive integer such that every $p\in P(A,N)$ has more than $\lambda$ terms. However, this vanishing can be removed, as it is caused by a factor $(\lambda - i)$ appearing in all coefficients
of \eqref{eqn:SolutionAlongPolar} or \eqref{eqn:SolutionAlongPolar2}. 
Removing such factors,~\eqref{eqn:SolutionAlongPolar} and \eqref{eqn:SolutionAlongPolar2} provide everywhere nontrivial solutions that vary locally analytically in $\lambda$ along $L_0(N)$ or $L_k(N)$, respectively. Local analyticity on $\CC^n\minus\Var(x_1x_n)$ is clear from the denominators of~\eqref{eqn:SolutionAlongPolar} and~\eqref{eqn:SolutionAlongPolar2}.
\end{proof}

The behavior of extended Euler--Mellin integrals along nonpolar resonant lines is more difficult to track. 
To illustrate the difference, we turn to a discussion of the behavior of residue integrals. 

Still using $f(z) = \sum_{i=1}^{n} x_i z^{k_i}$, consider an integral of the form
\begin{equation}
\label{eqn:HypergeometricIntegral}
\int_C {f(z)^{\beta_1}}{z^{-\beta_2}}\,
\,\frac{dz}{z},
\end{equation}
where $C$ is some cycle in $\CC\minus\Var(f)$. It is shown in~\cite[Theorem~2.7]{GKZ90} that~\eqref{eqn:HypergeometricIntegral} is $A$-hypergeometric as a germ in $x$, provided that it has sufficiently nice convergence properties. This is the case when $C$ is compact. 
The integrand in~\eqref{eqn:HypergeometricIntegral} has, for very
general $\beta$ (including all nonresonant parameters),  singularities
at $0$, $\infty$, and at each $\rho \in \Var(f)$. This yields $\vol(A) + 2$ residue integrals
\[
\Res_\rho(x,\beta) \defeq \int_{C(\rho)} {f(z)^{\beta_1}}{z^{-\beta_2}}\,\,\frac{dz}{z}, 
\]
where $C(\rho)$ is a small counterclockwise loop around $\rho$ if
$\rho\in\CC$ equals zero or a root of $f$, or it is a large clockwise
oriented loop if $\rho = \infty$. 
The integral 
\eqref{eqn:HypergeometricIntegral} is multivalued
in two senses. It is multivalued in $x$, as an 
analytic continuation along a loop in $(\CC^*)^n$
need not preserve the cycle $C$; also, it depends on the choices of branches
of the exponential functions $f(z)^{\beta_1}$ and $z^{-\beta_2}$.
Note that the values of  \eqref{eqn:HypergeometricIntegral}
for different branches of these exponential functions 
differ only by multiplication times an
exponential function in the parameters. In particular, 
for a fixed cycle $C$, the $\CC$-vector space spanned 
by the integral in question is uniquely determined.
In the case when its cycle of integration $C$ is compact, 
\eqref{eqn:HypergeometricIntegral} is entire in $\beta$, as its integrand shares this property. 

\begin{convention}
\label{conv:zerosf}
Let $\rho_1, \dots, \rho_{k}$ denote the zeros of $f$, with indices considered modulo $k = \vol(A)$, ordered by their arguments. 
Similarly, label the components of the complement of $\sC(f)$ by $\Theta_i$, where the indexing is chosen so that
the sector $\Arg^{-1}(\Theta_i)$ contains $\rho_{i-1}$ and $\rho_{i}$ in its boundary.
\end{convention}

Because of the multivaluedness of \eqref{eqn:HypergeometricIntegral}, at generic $\beta$, the vanishing
theorem of residues does not apply.
However, for parameters $\beta$ contained in~\eqref{eqn:ConvergenceDomain}, it follows from the estimates in the proof of~\cite[Theorem 2.3]{BFP} that for each $\rho_i \in \Var(f)$, 
\begin{equation}
\label{eqn:resEM}
\Res_{\rho_i}(x,\beta) = M_f^{\Theta_i}(x,\beta) - M_f^{\Theta_{i+1}}(x,\beta),
\end{equation}
provided that the branches of 
(the integrands of) 
the Euler--Mellin integrals in the right hand side are chosen 
consistently.
Note that by the uniqueness of meromorphic extension, this 
equality holds for all nonresonant $\beta$, when on the right 
hand side, each $M_f^{\Theta_j}(x,\beta)$ is replaced by 
$\Psi_f^{\Theta_j}(x, \beta)$.

It is natural to decompose the set of residue integrals into two types, where the first
is given by the residues at the roots $\rho \in \Var(f)$ and the second is
given by the residues at $0$ and $\infty$. 
As we now show, along polar lines, the first type of residue integrals vanish; however, along nonpolar resonant lines, it is
instead the residue integrals of the second type that vanish. 

\pagebreak
\begin{proposition} \ 
\label{prop:genericResNonpolar:Residues}
\vspace{-3mm}
\begin{enumerate}
\item If $\beta \in \Pol(A)$, then $\Res_{\rho}(x, \beta) = 0$ for each $\rho\in \Var(f)$.
\item If $\beta\in\ResArr(A)\minus\Pol(A)$, 
then at least one of $\Res_{0}(x,\beta)$ and $\Res_{\infty}(x,\beta)$
vanishes identically in $x$.
\end{enumerate}
\end{proposition}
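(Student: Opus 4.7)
For Part (1), I would use Theorem~\ref{thm:genericResPolar}. Let $L\subseteq\Pol(A)$ be a polar line and $\rho_i\in\Var(f)$. Theorem~\ref{thm:genericResPolar} asserts that the extended Euler--Mellin integrals $\Psi_f^\Theta(x,\beta)$ coincide for all $\beta\in L$, independently of the connected component $\Theta$. Hence $\Psi_f^{\Theta_i}(x,\beta) - \Psi_f^{\Theta_{i+1}}(x,\beta)$ vanishes identically on $L$. Combining this with~\eqref{eqn:resEM}, rewritten via~\eqref{eq:extendedEM} as
\[
\Res_{\rho_i}(x,\beta) = \bigl(\Psi_f^{\Theta_i}(x,\beta)-\Psi_f^{\Theta_{i+1}}(x,\beta)\bigr)\,G(\beta),
\]
where $G(\beta)$ denotes the common product of Gamma-function and polynomial factors appearing on the right-hand side of~\eqref{eq:extendedEM} (in particular, independent of $\Theta$), I obtain $\Res_{\rho_i}(x,\beta)=0$ on the Zariski dense subset of $L$ where $G$ is finite. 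Since $\Res_{\rho_i}(x,\beta)$ is entire in $\beta$---its defining cycle is compact and the integrand is holomorphic in $\beta$---this vanishing extends by continuity to all of $L$.

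For Part (2), I would proceed by a monodromy analysis. From the definition~\eqref{eqn:ResArr} of $\ResArr(A)$ and the classification of the proper faces of $A$, any $\beta\in\ResArr(A)$ satisfies $\beta_2\in\ZZ$ or $k\beta_1-\beta_2\in\ZZ$. The multivalued integrand $f(z)^{\beta_1}z^{-\beta_2-1}\,dz$ has local monodromy $e^{-2\pi i\beta_2}$ around $z=0$ and local monodromy $e^{2\pi i(k\beta_1-\beta_2)}$ around $z=\infty$ (the latter seen by substituting $w=1/z$). Interpreting each residue as the pairing of the differential with a twisted cycle in the local system of branches, a small loop about a branch point with monodromy $e^{2\pi i\alpha}$ represents the homology class $(1-e^{2\pi i\alpha})$ times an arc whose associated integral is entire in $\beta$. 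Consequently $\Res_0(x,\beta)$ acquires the prefactor $1-e^{-2\pi i\beta_2}$ and $\Res_\infty(x,\beta)$ acquires the prefactor $1-e^{2\pi i(k\beta_1-\beta_2)}$; the resonance hypothesis on $\beta$ forces at least one of these prefactors to vanish identically in $x$.

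The main obstacle I anticipate lies in completing Part (1) on the two exceptional polar lines $\{\beta_2=0\}$ and $\{k\beta_1-\beta_2=0\}$, where $G$ is singular along the entire line rather than only at isolated points, so the \emph{dense subset} argument above fails as stated. I would handle these by exploiting the isolated intersections of the exceptional line with the transverse polar lines (on which Part (1) is already established), together with the entireness of $\Res_{\rho_i}$ on $\CC^2$ and the explicit combinatorial control of $\Psi_f^{\Theta_i}-\Psi_f^{\Theta_{i+1}}$ in a full neighborhood of $L$ afforded by the proof of Theorem~\ref{thm:genericResPolar}, to force the desired identical vanishing of $\Res_{\rho_i}$ on $L$.
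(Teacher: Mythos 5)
Your proposal has genuine gaps in both parts. For part (1) you start from the same two ingredients as the paper, namely \eqref{eqn:resEM} and Theorem~\ref{thm:genericResPolar}, but the execution does not close. Writing $\Res_{\rho_i}(x,\beta)=\bigl(\Psi_f^{\Theta_i}(x,\beta)-\Psi_f^{\Theta_{i+1}}(x,\beta)\bigr)G(\beta)$ is fine, but your claim that $G$ is finite on a Zariski dense subset of a polar line $L$ is false for \emph{every} polar line, not only for the two supporting lines: by \eqref{eqn:polar} and \eqref{eq:extendedEM}, $\Pol(A)$ consists precisely of those lines along which the Gamma-factor poles of $G$ survive (the linear factors in \eqref{eq:extendedEM} cancel the Gamma poles only along the gap lines, which by definition are not in $\Pol(A)$). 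Hence on any $L\subseteq\Pol(A)$ the set where $G$ is finite is at most a finite set of points, the dense-subset argument yields nothing, and the patch sketched in your last paragraph is circular, since it presupposes that (1) is already known on the transverse polar lines. Note moreover that first-order vanishing of $\Psi_f^{\Theta_i}-\Psi_f^{\Theta_{i+1}}$ on $L$ against a simple pole of $G$ along $L$ would only give finiteness of the product there, not vanishing; the paper sidesteps this bookkeeping by identifying $\Res_{\rho_i}$ with the difference of the meromorphically continued Euler--Mellin integrals (the sentence following \eqref{eqn:resEM}) and then invoking Theorem~\ref{thm:genericResPolar} directly.

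For part (2) you take a route genuinely different from the paper's, and it is the step that fails. The claimed factorization $\Res_0(x,\beta)=\bigl(1-e^{-2\pi i\beta_2}\bigr)\cdot(\text{entire in }\beta)$ is not correct, and in the only case you need ($\beta_2\in\ZZ$, i.e.\ trivial local monodromy at $z=0$) it proves too much: for $\beta_2=N$ a positive integer the integrand $f(z)^{\beta_1}z^{-N-1}$ is single valued near $0$, so $\Res_0$ is the ordinary residue, the coefficient of $z^{N}$ in $f(z)^{\beta_1}$, which is nonzero for generic $\beta_1,x$ whenever $N$ lies in the numerical semigroup generated by $k_2,\dots,k_n$; for instance $N=k_2$ gives $\beta_1x_1^{\beta_1-1}x_2\neq0$, even though $1-e^{-2\pi i\beta_2}=0$. (Such $N$ lie on polar lines, where no vanishing is asserted and indeed none holds.) The twisted-homology fact that a small loop equals $(1-\text{monodromy})$ times a chain concerns nontrivial monodromy and does not produce an entire cofactor here. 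More fundamentally, your argument never uses the distinction between polar and nonpolar resonant lines, which is the entire content of (2): the vanishing holds only when $N$ is a \emph{gap} of the semigroup, and the paper proves it by iterating the extension formulas \eqref{eqn:ExtensionFormulas-1} and \eqref{eqn:ExtensionFormulas-2} (valid for all $\beta$ because residue integrals are entire in $\beta$) until every resulting term has an integrand holomorphic at the origin; equivalently, for a gap $N$ the coefficient of $z^{N}$ in $f^{\beta_1}$ vanishes identically. A monodromy count alone cannot distinguish $\beta_2=1$ from $\beta_2=k_2$, so this part needs a different argument.
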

\begin{proof}
Note that (1) follows from~\eqref{eqn:resEM} and Theorem~\ref{thm:genericResPolar}.

For (2), it is enough to consider the case $\beta_2 = N$ for some $N\in \ZZ$, so that $\beta$ is resonant with respect to the face $\left\{\fface\right\}$.  (If $\beta$ is instead resonant with respect to the face
$\left\{\lface\right\}$, then apply the change of variables given by $z\mapsto 1/z$ to return to this case.) 

We claim that $\Res_{0}(x,\beta)$ vanishes identically in $x$.
If $N$ is a nonpositive
integer, then the integrand of~\eqref{eq:EMintegral} is
analytic in a neighborhood of $z = 0$. Thus, the only case to
consider is when $N$ is positive, which corresponds to a gap in the set $\Pol(A)$ of polar lines.

Since residue integrals involve compact cycles, they converge for every $\beta$.
However, by uniqueness of meromorphic extension, the
identities~\eqref{eqn:ExtensionFormulas-1}
and~\eqref{eqn:ExtensionFormulas-2} hold. 
Hence 
\[
\Res_{0}(x,\beta) 
= \frac{\beta_1}{\beta_2} \int_{C(0)} z^{-\beta_2+1} f'(z)f(z)^{\beta_1-1}\,\frac{dz}{z},
\]
where $f'(z)$ denotes the derivative of $f(z)$ with respect to $z$. 
By applying this formula repeatedly, 
we conclude that it is enough to show that each term of $f'(z)$ which corresponds to $k_i > \beta_1$ vanishes. Indeed, this term equals
\[
\frac{\beta_1}{\beta_2} \int_{C(0)} z^{-\beta_2+1} k_i z_i^{k_i-1}f(z)^{\beta_1-1}\,\frac{dz}{z},
\]
whose integrand has a zero at the origin of order $k_i - \beta_1 - 1 \geq 0$. 
\end{proof}

\begin{remark}
\label{rmk:linIndep?}
It is not clear if any further dependencies of the residue integrals
exist at nonpolar resonant parameters. 
Thus, the implications of
Proposition~\ref{prop:genericResNonpolar:Residues}.(2) for extended
Euler--Mellin integrals also remain unclear. Computational evidence
suggests that at nonpolar resonant parameters, the analytic
continuations of the extended Euler--Mellin integrals remain linearly
independent. If this statement were proven, then the decomposition of
$\CC^2$ in Theorem~\ref{thm:main} could instead be given as
$(\CC^2\minus\Pol(A))\cup\Pol(A)$. On the other hand, all the lines in
the arrangement $\sL$ from Theorem~\ref{thm:bigSheaf} are
polar, a fact that is proved using hypergeometric series.
\endrk
\end{remark}

\section{Intersections of polar lines and Euler--Mellin integrals}
\label{sec:EM:nongenericResonant}

In this section, we explain the behavior of extended Euler--Mellin integrals at parameters $\beta$ that lie at the intersection of two resonant lines, so that $\beta$ is resonant with respect to both faces of $A$, see Definition~\ref{def:resonant}. 
The partition of the resonant arrangement into polar and nonpolar lines gives a natural partition of these parameters into three distinct cases.

At intersections of two nonpolar lines, and also at intersections of one polar and one nonpolar line, the rank of $H_A(\beta)$ is $\vol(A) = k$. 
As discussed in Remark~\ref{rmk:linIndep?}, it not clear how (or if) the extended Euler--Mellin integrals form linear dependencies at such parameters. 
We resolve this issue later through the use of series solutions of $H_A(\beta)$ in Theorem~\ref{thm:seriesLinIndep}. 

Thus for now, we consider only the third case, when a parameter $\beta$ is contained in the intersection of two polar lines, which, in particular, includes all rank-jumping parameters. 
Associated to these two lines, respectively, are the two finitely supported series as described in Theorem~\ref{thm:genericResPolar}. 
Before we (possibly) remove unnecessary (vanishing) coefficients at the end of the proof of 
Theorem~\ref{thm:genericResPolar}, 
these two series are evaluations of the same integral and therefore coincide. 
However, after such coefficients are removed, the resulting two series may or may not coincide. 
For $\beta\in\ZZ^2$, this is decided by whether or not $\beta$ is rank-jumping.

Recall that $\cE_A \defeq \{\beta\in\CC^2\mid \rank\, H_A(\beta)>\vol(A)\}$ denotes the set of rank-jumping parameters. 

\begin{theorem}
\label{thm:nongenericResPolarEM}
Suppose that $\beta$ is at the intersection of two resonant lines, both of which are contained in $\Pol(A)$. 
\begin{enumerate}
\vspace{-2.5mm}
\item \label{item:prop:nongenericResPolar:nonZZ}
If $\beta\notin\ZZ^2$ or $\beta\in\cE_A$, then 
the two solutions 
obtained in Theorem~\ref{thm:genericResPolar} at $\beta$ 
are finitely supported and linearly independent.
\item \label{item:prop:nongenericResPolar:ZZ}
If $\beta\in\ZZ^2\minus\cE_A$, then for $x\in(\CC^n\minus\Sigma_A)$, 
then the two solutions 
obtained in Theorem~\ref{thm:genericResPolar} at $\beta$ 
coincide and equal a single finitely supported series.
\end{enumerate}
\end{theorem}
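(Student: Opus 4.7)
The plan is to read off the supports and explicit coefficients of the two series directly from the formulas~\eqref{eqn:SolutionAlongPolar} and~\eqref{eqn:SolutionAlongPolar2} of Theorem~\ref{thm:genericResPolar}. Let $S_1$ denote the finitely supported solution arising from the polar line $L_0(\beta_2)$ and $S_2$ the one arising from $L_k(k\beta_1-\beta_2)$. A monomial of $S_1$ has exponent vector $v\in\CC^n$ with $v_2,\ldots,v_n\in\NN$, $v_1=\beta_1-\sum_{i\geq 2}v_i$, and $Av=\beta$; symmetrically, a monomial of $S_2$ has $v_1,\ldots,v_{n-1}\in\NN$, $v_n=\beta_1-\sum_{i<n}v_i$, and $Av=\beta$.

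For part~\ref{item:prop:nongenericResPolar:nonZZ}, the strategy is to exhibit disjoint supports. Note first that since $\beta$ lies at the intersection of two polar lines, $\beta_2\in G_k\subseteq\NN$ and $k\beta_1-\beta_2\in G_0\subseteq\NN$; in particular $\beta_2$ is automatically an integer, so if $\beta\notin\ZZ^2$, then $\beta_1\notin\ZZ$. In this subcase, the first coordinate of any monomial of $S_1$ is a non-integer, while that of any monomial of $S_2$ is a nonnegative integer, hence the supports are disjoint. Combined with the nontriviality provided by Theorem~\ref{thm:genericResPolar}, this yields linear independence. In the other subcase, if $\beta\in\cE_A$, then~\eqref{eqn:EAforCurves} gives $\beta\notin\NN A$, so no $v\in\NN^n$ satisfies $Av=\beta$. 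Consequently $v_1\in\ZZ_{<0}$ strictly for every $v$ in the support of $S_1$ (after the common factors $(\lambda-i)$ have been removed as in Theorem~\ref{thm:genericResPolar}), and $v_n\in\ZZ_{<0}$ for every $v$ in the support of $S_2$; again the supports are disjoint, giving linear independence.

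For part~\ref{item:prop:nongenericResPolar:ZZ}, by~\eqref{eqn:EAforCurves} we have $\beta\in\NN A$, so in particular $\beta_1,\beta_2\in\NN$. Inspecting~\eqref{eqn:SolutionAlongPolar}, the coefficient attached to each $p$ carries the factor $\prod_{i=1}^{|m(p)|-1}(\beta_1-i)$, which vanishes precisely when $1\leq\beta_1\leq|m(p)|-1$, i.e., when $v_1=\beta_1-|m(p)|<0$. The monomials $x^v$ with $v\in\NN^n$ and $Av=\beta$ survive, and their coefficients are sums of strictly positive terms (every remaining factor $\beta_1-i$ and every $\beta_2-s_i(p)$ is a positive integer); thus the coefficients are strictly positive and no further common factor can be extracted. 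Therefore $S_1$ is a nonzero polynomial solution of $H_A(\beta)$ supported on $\{v\in\NN^n : Av=\beta\}$, and a symmetric argument gives the same for $S_2$. The space of polynomial solutions of $H_A(\beta)$ supported on this fiber is one-dimensional: the Euler operators force the support to lie in the fiber, and the toric binomial relations $\del^u-\del^{u'}$ with $Au=Au'$ together with the connectivity of the fiber under the corresponding moves pin down all coefficients up to a global scalar. Hence $S_1$ and $S_2$ are scalar multiples of one another and so coincide, up to rescaling, as a single finitely supported series.

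The principal technical point is the uniqueness statement in part~\ref{item:prop:nongenericResPolar:ZZ}. I expect to either invoke the known structural fact that at $\beta\in\NN A$ the fiber $\{v\in\NN^n : Av=\beta\}$ is connected under the moves induced by $I_A$, or to verify the proportionality directly by comparing the coefficients of a cleverly chosen monomial in $S_1$ and $S_2$ from the explicit sums, which sidesteps a general uniqueness appeal.
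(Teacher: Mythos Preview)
Your argument is correct. For $\beta\notin\ZZ^2$ it is essentially the paper's proof (disjoint supports via the non-integrality of $v_1$). For the remaining cases your route diverges from the paper's.

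For $\beta\in\cE_A$, you show directly that the supports of $S_1$ and $S_2$ are disjoint (every exponent of $S_1$ has $v_1<0$ while every exponent of $S_2$ has $v_1\geq 0$), whereas the paper instead invokes the description from~\cite{CDD} of the two Laurent polynomial solutions, identified by their negative supports. Your version is more self-contained; the only point worth making explicit is that after dividing out the common factor $\prod_{i=1}^{m-1}(\lambda-i)$, where $m=\min_p|m(p)|$, the remaining coefficient at $\lambda=\beta_1$ is a nonzero multiple of $\prod_{i=m}^{|m(p)|-1}(\beta_1-i)$, which does not vanish since $\beta_1<m$, so every monomial indeed survives.

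For part~\ref{item:prop:nongenericResPolar:ZZ}, the paper introduces the coordinate change $\Delta=CA$ and a combinatorial criterion (conditions~\eqref{eq:gamma1}--\eqref{eq:gamma2}) to show that the extended Euler--Mellin integral is nonzero at $\beta$ precisely when $\beta\in\NN A$; since both series are restrictions of a constant multiple of this single integral, they automatically agree. You instead read off from~\eqref{eqn:SolutionAlongPolar} that the surviving monomials of $S_1$ are exactly those with $v\in\NN^n$, with strictly positive coefficients, and then appeal to the one-dimensionality of the polynomial solution space of $H_A(\beta)$ for $\beta\in\NN A$. This is a genuinely different and cleaner path that bypasses the $\Delta$-computation entirely; the uniqueness you need is exactly~\cite[Lemma~3.4.10]{SST} (or~\cite[Proposition~1.1]{CDD}), which the paper itself cites immediately after the theorem, so you may simply invoke it rather than arguing connectivity from scratch. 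Both approaches deliver proportionality rather than literal equality (the scalar factor removed along each line may differ), and that is all the statement requires.
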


If $\beta \in \NN A$,~\cite[Lemma~3.4.10]{SST}
(or~\cite[Proposition~1.1]{CDD}) show that $H_A(\beta)$ has a unique
(up to multiplication by a constant) polynomial solution. By
Theorem~\ref{thm:nongenericResPolarEM}, this solution is given by an
extended Euler--Mellin integral.

\begin{example}
\label{ex:0134:finiteSupp}
We compute the finitely supported solutions provided by
Theorem~\ref{thm:genericResPolar} when    
$A = \left[\begin{smallmatrix}1&1&1&1\\0&1&3&4\end{smallmatrix}\right]$. 
At $\beta = \left[\begin{smallmatrix}1\\ 1\end{smallmatrix}\right]\in\NN A$,~\eqref{eqn:SolutionAlongPolar} and~\eqref{eqn:SolutionAlongPolar2} both produce $x_1$. 
Next, at $\beta = \left[\begin{smallmatrix}1/2\\ 1\end{smallmatrix}\right]$,~they yield the
two finitely supported Puiseux series $x_1^{-\frac 1 2} x_2$ and $x_3 x_4^{-\frac 1 2}$. 
Finally, at the unique rank-jumping parameter for this $A$, $\beta = \left[\begin{smallmatrix}1\\ 2\end{smallmatrix}\right]$, they provide the two Laurent monomials $x_1^{-1} x_2^2$ and $x_3^2 x_4^{-1}$.
\endrk
\end{example} 

The \emph{support} of a series solution $\sum_{v\in \CC^n} c_v x^v$ of $H_A(\beta)$ is the set $\{v\in \CC^n\mid c_v\neq 0\}$, and the \emph{negative support} of a monomial $c_v x^v$ is the set $\{i\mid v_i\in\ZZ_{<0}\}$. 

\begin{proof}[Proof of Theorem~\ref{thm:nongenericResPolarEM}]
If $\beta\notin\ZZ^2$, then the series solutions~\eqref{eqn:SolutionAlongPolar}
and~\eqref{eqn:SolutionAlongPolar2} of $H_A(\beta)$ have disjoint supports. 
In particular, they are linearly independent, and the only function
which is simultaneously a constant multiple of both is the trivial
(identically zero) function.  

To handle the case $\beta\in\ZZ^2$, 
consider the matrix $\Delta \defeq CA$, where
$C = \left[\begin{smallmatrix} k&-1\\ 0 &\phantom{-}1\end{smallmatrix}\right]$. 
Since $C\in\mathrm{GL}_2(\QQ)$, it induces an isomorphism of $A$-hypergeometric systems  for any $\beta\in\CC^2$: 
\[
H_A(\beta) \cong H_\Delta(\gamma) \quad\text{where $\gamma \defeq C\beta$}. 
\]
Figure~\ref{fig:AVsDelta} illustrates the change of coordinates induced by $C$ on the parameter space. 
Let $\NN\Delta$ denote the monoid spanned by the columns of $\Delta$. 

\begin{figure}[t]
\centering
\includegraphics[angle=180,width=50mm]{023v4}
$\quad$
\includegraphics[angle=180,width=50mm]{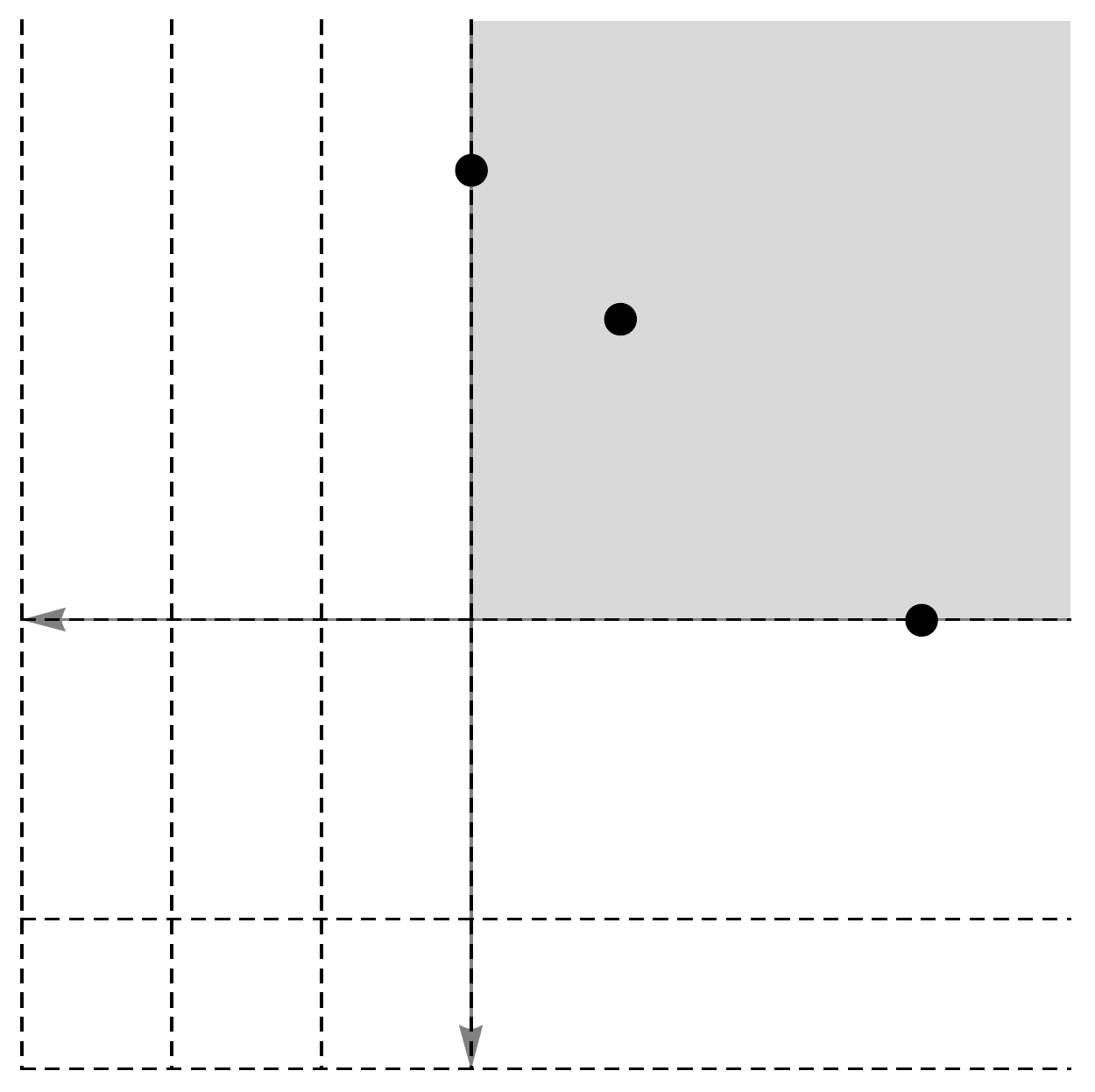}
\caption{
The left diagram shows $\Pol(A)$, while the right shows $\Pol(\Delta)$, mimicking Figure~\ref{fig:PolesVsResonance}.
}
\label{fig:AVsDelta}
\end{figure}

With this notation, when $\beta\in\ZZ^2$ is contained in two lines of $\Pol(A)$, consider all sets (if any) of the form 
$\{\delta^{i,j}\in\NN\Delta \mid 1\leq i,j\leq 2\}$ such that, with $\gamma = C\beta$, all of the following hold: 
\begin{align}
\label{eq:gamma1}
&\delta^{1,1}_1= \gamma_1, \quad
\delta^{1,1}_2+\delta^{1,2}_2 = \gamma_2, \quad
 \delta^{1,2}\neq \left[\begin{smallmatrix}0\\0\end{smallmatrix}\right],\\ 
\label{eq:gamma2}
&\delta^{2,1}_2= \gamma_2, \quad
\delta^{2,1}_1+\delta^{2,2}_1 = \gamma_1, \quad
\delta^{2,2}\neq \left[\begin{smallmatrix}0\\0\end{smallmatrix}\right].  
\end{align}
The conditions~\eqref{eq:gamma1} and~\eqref{eq:gamma2} are necessary for the 
equations of the polar lines containing $\beta$ 
to appear in the coefficient of some term after 
the Euler--Mellin integrals have been expanded either first over the face $\{\fface\}$, or first
over the face $\{\lface\}$ respectively. 
From this, we immediately conclude that if there is no set of the form $\{\delta^{i,j}\in\NN\Delta\mid 1\leq i,j\leq 2\}$ such that~\eqref{eq:gamma1} and~\eqref{eq:gamma2} hold, then all extended Euler--Mellin integrals vanish.

Let us now show that 
when $\beta\in\ZZ^2$, all extended Euler--Mellin integrals vanish at $\beta$ if and only if $\beta$ is rank-jumping 
for $A$.
We first claim that, as $\beta\in\ZZ^2$ is contained in the intersection of two lines in $\Pol(A)$, 
there exists a pair $(\delta^{1,1}, \delta^{1,2})$ such that~\eqref{eq:gamma1} holds 
if and only if there exists a pair $(\delta^{2,1}, \delta^{2,2})$ such
that~\eqref{eq:gamma2} holds. 
To see this, recall that $\Delta = CA$ and $\gamma = C\beta$, for $C$ as above, so $\beta\in\NN A$ if and only if $\gamma\in\NN\Delta$. 
Further, by symmetry, it is enough to show that~\eqref{eq:gamma1} is equivalent to $\gamma\in\NN\Delta$. 
Notice that each $\delta \in\NN \Delta$ fulfills that $\delta_1 + \delta_2 = kl$, for some $l\in \NN$. 
This implies that if $\delta$ and $\tilde \delta$ are such that
$\delta_1 = \tilde \delta_1$, then $\delta = \tilde \delta + h\left[\begin{smallmatrix}0\\k\end{smallmatrix}\right]$ for some $h\in \ZZ$. 
If $(\delta^{1,1}, \delta^{1,2})$ satisfies~\eqref{eq:gamma1}, 
then $\delta_1^{1,1} = \gamma_1$ and $\delta_2^{1,1} \leq \gamma_2$. 
Hence, $\gamma = \delta_1 + h\left[\begin{smallmatrix}0\\k\end{smallmatrix}\right]$ for some $h \geq 0$. Since $\left[\begin{smallmatrix}0\\k\end{smallmatrix}\right] \in \Delta$,
$\gamma \in \NN\Delta$.
Conversely, if $\gamma \in \NN \Delta$, then because $\left[\begin{smallmatrix}k\\0\end{smallmatrix}\right] \in \Delta$, $\delta^{1,1} = \gamma$ and $\delta^{1,2} = \left[\begin{smallmatrix}k\\0\end{smallmatrix}\right]$ satisfies~\eqref{eq:gamma1}.
This establishes the proof of the claim. 

From the above calculation, we see that the failure of~\eqref{eq:gamma1} is equivalent to 
$\beta \in \left(\ZZ \fface+\NN A\right)\minus\NN A$.
Similarly, the failure of~\eqref{eq:gamma2} is equivalent to 
$\beta \in \left(\ZZ \lface+\NN A\right)\minus \NN A$.
Thus the desired conclusion for $\beta\in\ZZ^2\minus\cE_A$ follows from~\eqref{eqn:EAforCurves}.

Finally, if $\beta\in \cE_A$, we must show that the two finitely supported solutions recovered from 
\eqref{eqn:SolutionAlongPolar} and \eqref{eqn:SolutionAlongPolar2} are linearly independent. Expressions for the two Laurent polynomial solutions at rank-jumping parameters are known from \cite{CDD}, and they are identified by the negative supports of their monomials. One solution, which by necessity is obtained from \eqref{eqn:SolutionAlongPolar},  contains negative powers only of $x_1$. The other, by necessity is obtained from \eqref{eqn:SolutionAlongPolar2}, contains negative powers only of $x_n$, establishing the result.
\end{proof}

\section{Series solutions of hypergeometric systems on projective toric curves}
\label{sec:SeriesSolutions}

The results of Sections~\ref{sec:EM:genericResonant} and~\ref{sec:EM:nongenericResonant} show that, when beginning with analytic continuations of extended Euler--Mellin integrals, it is necessary to take parametric derivatives as in Section~\ref{sec:PD+sheaf} to complete a basis of the solution space of $H_A(\beta)$ at resonant parameters $\beta$. 
However, this technique is not sufficient to provide the statements of
Theorems~\ref{thm:main} and~\ref{thm:bigSheaf}, since it could be the case that along a resonant line, some solution in a basis obtained via parametric derivatives vanished at a point resonant with respect to both facets of $A$. 
To circumvent this issue, in this section we make use of series solutions of $H_A(\beta)$, with the goal of proving the following two results. 

\begin{theorem}
\label{thm:nonResonantLinesViaSeries}
If $L$ is a line contained in $\ResArr(A)$, 
then there exist
$\vol(A) = k$ functions 
that are linearly independent solutions of $H_A(\beta)$ for all $\beta \in L$ 
and are locally analytic on $(\CC^n \minus \Sigma_A) \times L$.
\end{theorem}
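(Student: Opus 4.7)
The plan is to construct the required $k$ solutions as $A$-hypergeometric Gamma-series, in the spirit of the alternative proof of the integer-parameter rank formula given in~\cite[Section~4.2]{SST}, and then to verify that the construction extends analytically along $L$. First I reduce to a canonical form for $L$: since every line in $\ResArr(A)$ is a translate of $\CC\cdot a_1$ or of $\CC\cdot a_n$, the symmetry $z\mapsto 1/z$ used earlier in the paper allows me to assume $L = \{(\lambda,N) : \lambda\in\CC\}$ for some fixed $N\in\ZZ$. I then fix a generic weight vector $w\in\RR^n$ whose associated initial ideal of $I_A$ is squarefree with exactly $\vol(A)=k$ standard pairs $(\del^\alpha,\sigma)$, where each $\sigma$ is a face of $A$. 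To each such pair and each $\beta\in L$ there is a fake exponent $v=v(\alpha,\sigma,\beta)\in\CC^n$ satisfying $Av=\beta$ and $v_i=\alpha_i$ for $i\notin\sigma$, together with the canonical Gamma series
\[
\phi_v(x,\beta) \;=\; \sum_{u\in \ker_{\ZZ} A} \frac{\prod_{i\in\sigma}\Gamma(v_i+1)}{\prod_{i\in\sigma}\Gamma(v_i+u_i+1)}\, x^{v+u}.
\]
For $\beta$ generic, these $k$ series are log-free by~\cite{saito-logFree}, converge in an open set determined by $w$, and form a basis of $\Sol_x(H_A(\beta))$.

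Next I track how the coefficients of $\phi_v$ depend on $\beta=(\lambda,N)$. Because $\sigma$ is a face of $A$ and the second coordinate of $\beta$ is pinned to $N$ along $L$, each $v_i$ with $i\in\sigma$ is an affine-linear function of $\lambda$, so the Gamma ratios appearing in $\phi_v$ are meromorphic in $\lambda$ with poles only at integer values. Mimicking the final manipulation in the proof of Theorem~\ref{thm:genericResPolar}, any factor $(\lambda-j)$ that divides every coefficient of $\phi_v$ can be cleared to produce a normalized series $\tilde\phi_v(x,\lambda)$ that is nontrivial and locally analytic in $\lambda$. On those strata of $\ResArr(A)$ where denominator Gamma factors force all but finitely many terms to vanish, $\tilde\phi_v$ collapses to a finite sum, consistent with Theorems~\ref{thm:genericResPolar} and~\ref{thm:nongenericResPolarEM}; elsewhere it remains a genuine infinite Gamma series. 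Termwise application of the operators in $H_A(\beta)$ shows $\tilde\phi_v$ is a solution of the system for every $\beta\in L$, and the standard GKZ convergence estimates give local analyticity on $(\CC^n\minus\Sigma_A)\times L$ since $w$ was fixed independently of $\lambda$.

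Linear independence of the $k$ series $\tilde\phi_v$ along $L$ follows from a leading-monomial argument. The fake exponents $v(\alpha,\sigma,\beta)$ attached to distinct standard pairs differ in coordinates outside their respective $\sigma$, so modulo $\ZZ A$ they lie in $k$ distinct cosets. After clearing common Gamma-pole factors, the $w$-minimal monomial appearing in $\tilde\phi_v$ is of the form $x^{v+u_0}$ for some minimal lattice element $u_0$ in the support of the series, and these minimal monomials are distinct across the $k$ standard pairs. Because each $\tilde\phi_v$ is log-free, any vanishing linear combination would force cancellation of distinct $w$-leading monomials, which is impossible.

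The main obstacle is the normalization step: the pole factor one divides out of $\phi_v$ must depend only on the combinatorial stratum of $\ResArr(A)$ containing $\lambda$, not on $\lambda$ itself, so that the rescaled $\tilde\phi_v$ varies analytically across all of $L$. The projective-curve hypothesis is decisive here, since $A$ has only two nontrivial proper faces: the standard pairs whose $\sigma$ is a vertex produce precisely the two finitely supported solutions of~\eqref{eqn:SolutionAlongPolar} and~\eqref{eqn:SolutionAlongPolar2}, whose normalization was already controlled in Section~\ref{sec:EM:genericResonant}, while the standard pairs with $\sigma=\varnothing$ have no Gamma factors in the denominator depending on $\lambda$ and therefore present no normalization issue along $L$.
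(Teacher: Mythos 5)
Your overall strategy---Gamma series attached to standard pairs of a reverse lexicographic initial ideal, analyticity in the parameter along $L$, and disjoint supports for linear independence---is the same as the paper's. But there is a genuine gap at the very first step, and it hides exactly the case the paper's proof is designed to handle.

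You posit a generic weight $w$ for which $\ini_w(I_A)$ is squarefree with \emph{exactly} $\vol(A)=k$ standard pairs, each of whose $\sigma$ is a face of $A$. No such $w$ exists in the cases of interest: having only top-dimensional standard pairs means $\ini_w(I_A)$ has no embedded primes, i.e.\ is Cohen--Macaulay, which forces $\CC[\del]/I_A$ to be Cohen--Macaulay and hence $\cE_A=\varnothing$ by~\cite{MMW}. Whenever $A$ has rank-jumping parameters, \emph{every} monomial initial ideal of $I_A$ carries lower-dimensional standard pairs of the form $(r',\{1\})$ (for the revlex order with $\del_1$ lowest); see Example~\ref{ex:0134sheaf}, where $\ini_\prec(I_A)$ has four top-dimensional standard pairs plus the embedded pair $\left((0,2,0,0),\{1\}\right)$. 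The consequence you are missing is this: along the finitely many lines $L=\{\mu+\lambda\fface\}$ with $\mu_2=(Ar')_2$ for such an embedded pair, one of the $k$ top-dimensional fake exponents is genuinely fake --- by the proof of~\cite[Theorem~4.2.4]{SST} it does \emph{not} give rise to a series solution of $H_A(\beta)$ unless $\beta\in\cE_A$ --- and it must be replaced by the series coming from $(r',\{1\})$, whose exponent along $L$ is independent of $\lambda$ in the relevant coordinate. Producing $k$ linearly independent solutions on precisely these lines is the substantive content of the theorem (they are the lines forming $\sL$ in Theorem~\ref{thm:bigSheaf}); your argument, as written, only covers the lines where nothing interesting happens. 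Relatedly, your final paragraph misreads the combinatorics: a standard pair with $\sigma=\varnothing$ contributes a fake exponent only at the single parameter $\beta=Ar$, never along a whole line, so it cannot play the role you assign it.

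Two smaller points. First, your linear-independence argument via ``distinct cosets modulo $\ZZ A$'' is garbled: all fake exponents $v$ satisfy $Av=\beta$, so the correct statement is that differences of fake exponents from distinct top-dimensional standard pairs are not in $\ZZ^n$ (Lemma~\ref{lemma:noIntegerDifferences}), whence the series have disjoint supports. Second, the appeal to ``standard GKZ convergence estimates'' for joint analyticity in $(x,\lambda)$ is not off-the-shelf; the paper needs the dedicated estimate of Theorem~\ref{thm:analyticSeries} together with Remark~\ref{rmk:extendingConvergenceDomain} (which uses that $u_1\le 0$ on the summation set to allow $\xi\in\ZZ$) to get analyticity along all of $L$, including its integer points.
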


\pagebreak
\begin{theorem}
\label{thm:seriesLinIndep}
If $\bar{\beta}\in\ZZ^2$, 
there exists a neighborhood $\cW$ of
$\bar{\beta}$ in $\CC^2$ and $\vol(A)-1$ functions satisfying the following:
\begin{enumerate}
\vspace{-2.5mm}
\item they are solutions of $H_A(\beta)$ for all $\beta \in \cW$,
\item they are locally analytic for $(x,\beta)\in (\CC^n \minus \Sigma_A)\times\cW$,
\item they are linearly independent, and
\item for any fixed $\beta \in \cW$, the span of the specializations
  at $\beta$ of these functions does not contain any solution of $H_A(\beta)$ for which
  there exists an expression as a finitely supported series.
\end{enumerate}
\end{theorem}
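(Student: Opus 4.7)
\emph{Strategy.} The plan is to build the $k-1$ functions from the canonical $A$-hypergeometric series of~\cite{SST}. Fix a generic weight $w\in\RR^n$ so that $\ini_w(I_A)$ is a monomial ideal whose $k = \vol(A)$ standard pairs yield fake exponents $v^{(1)}(\beta),\dots,v^{(k)}(\beta)$, each affine in $\beta$ with $A v^{(i)}(\beta) = \beta$. Each fake exponent produces a canonical log-free series $\Phi_w^{(i)}(x,\beta)$ solving $H_A(\beta)$, by~\cite{saito-logFree} and~\cite[Section~3.4]{SST}. After expressing the coefficients via Pochhammer-style products rather than quotients of Gamma functions, thereby cancelling the spurious poles that would otherwise appear at $\bar\beta$, each $\Phi_w^{(i)}$ is locally analytic on $(\CC^n\setminus\Sigma_A)\times\cW$ for a sufficiently small neighborhood $\cW$ of $\bar\beta$, and the different $\Phi_w^{(i)}$ remain $\CC$-linearly independent throughout $\cW$ via their distinct leading monomials.

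\emph{Non-rank-jumping case.} From~\cite[Section~3.4]{SST}, $\Phi_w^{(i)}(x,\bar\beta)$ collapses to a finitely supported series exactly when the negative support of $v^{(i)}(\bar\beta)$ attains its maximum, and by Theorem~\ref{thm:nongenericResPolarEM} this happens for at most one index when $\bar\beta\in\ZZ^2\setminus\cE_A$ (the polynomial solution when $\bar\beta\in\NN A$). Discarding this index leaves $k-1$ infinite-support canonical series; together with the discarded one they form a basis of $\Sol_x(H_A(\bar\beta))$, so their span is a direct complement of the at-most-one-dimensional space of finitely supported solutions. Analyticity in $\beta$ extends this complementarity to all of $\cW$, establishing conditions (1)--(4).

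\emph{Rank-jumping case and main obstacle.} When $\bar\beta\in\cE_A$, exactly two canonical series degenerate to the Laurent-monomial solutions of Theorem~\ref{thm:nongenericResPolarEM}, so only $k-2$ canonical series remain with infinite support at $\bar\beta$; meanwhile the rank there is $k+1$ and the finitely supported subspace is two-dimensional, so I need one additional locally analytic solution on $\cW$ whose span together with the $k-2$ survivors is $(k-1)$-dimensional and complementary to this two-dimensional subspace. I construct this additional function by applying the parametric-derivative procedure of Proposition~\ref{pro:ParametricDerivatives} followed by the renormalization of Proposition~\ref{pro:NoBadPoints} to an appropriate combination of canonical series or extended Euler--Mellin integrals that vanishes along a polar line through $\bar\beta$, with the direction of differentiation taken transverse to that polar line. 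The main obstacle is to verify that the resulting function (i) is linearly independent from the $k-2$ surviving canonical series at every $\beta\in\cW$, and (ii) specializes at $\bar\beta$ to an infinite-support solution, so that the $(k-1)$-dimensional span truly complements the finitely supported subspace. For (i), a leading-monomial argument along the polar line distinguishes the deformed function from the surviving canonical series. For (ii), I invoke the description of rank-jump solutions in~\cite[Section~4.2]{SST} to identify the deformed function with an infinite-support series indexed by a nontrivial class in the relevant local cohomology module, which by its very construction cannot be a linear combination of the two Laurent monomials; continuity of the analytic family then propagates complementarity from $\bar\beta$ to a neighborhood.
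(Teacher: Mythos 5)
Your non-rank-jumping case is essentially the paper's argument, but your rank-jumping case rests on a false premise and therefore introduces a construction that is both unnecessary and not carried out. For a fixed reverse lexicographic order, Lemma~\ref{lemma:noIntegerDifferences} shows that fake exponents attached to distinct top-dimensional standard pairs of $\ini_w(I_A)$ never differ by an integer vector; hence at most \emph{one} of the $\vol(A)$ of them can have all integer coordinates, even when $\bar{\beta}\in\cE_A$. The two Laurent solutions at a rank-jumping parameter do not both arise from top-dimensional standard pairs of the same initial ideal: one of them comes from a lower-dimensional standard pair (or from the opposite term order), as Example~\ref{ex:0134sheaf} makes explicit. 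So in every case --- integral, rank-jumping or not --- there are already $\vol(A)-1$ top-dimensional fake exponents whose first and last coordinates are noninteger at $\bar{\beta}$ (hence on a small $\cW$), and Theorem~\ref{thm:analyticSeries} applies directly to each of the corresponding series. Your claim that only $\vol(A)-2$ series survive at $\bar{\beta}\in\cE_A$, and the ensuing parametric-derivative construction of a ``missing'' function, should be deleted; as written that construction is in any case only a sketch, since the combination that is supposed to vanish along a polar line is never exhibited and your points (i) and (ii) are asserted rather than proved.

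The second gap concerns condition (4). You establish complementarity to the finitely supported subspace at $\bar{\beta}$ and then ``propagate by continuity'' to $\cW$. This does not work: finitely supported solutions exist at many nearby parameters (Theorem~\ref{thm:genericResPolar} produces finitely supported Puiseux series all along every polar line crossing $\cW$), these are different functions at different $\beta$, and being a complement to a varying family of subspaces is not an open condition you may invoke without argument. The correct mechanism is combinatorial and uniform in $\beta$: by Lemma~\ref{lemma:noIntegerDifferences} the $\vol(A)-1$ chosen series have pairwise disjoint supports, each infinite because the relevant exponent coordinates remain noninteger throughout $\cW$, so no nonzero linear combination admits any cancellation and in particular none is finitely supported. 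Replace the continuity argument with this disjoint-support argument and the proof closes.
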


To obtain these results, 
we draw heavily on techniques developed in the influential book~\cite{SST} by Saito, Sturmfels, and
Takayama, especially Sections~2.5,~3.2,~3.4, and~4.2 of
that text. 

A \emph{weight vector} $w \in \RR_{\geq 0}^n$ induces a partial
ordering $\preceq_w$ of the monomials in 
$\CC[\del]\defeq \CC[\del_1,\dots,\del_n]$ via 
$\del^\nu\preceq_w \del^{\nu'}$ if $w \cdot \nu \leq w \cdot \nu'$. 
If $w$ is generic, then the ideal $\ini_w(I_A)$ generated by 
the leading terms of the elements of $I_A$ with respect to 
$w$ is a monomial ideal. Moreover, given any monomial order 
$\prec$ in $\CC[\del]$, then there exists 
$w \in\RR_{\geq 0}^n$ such that 
$\ini_\prec(I_A) =  \ini_w(I_A)$.

Let $w \in \RR_{\geq 0}^n$ be a generic weight vector. The ideal 
\[
\fin_w(H_A(\beta)) \defeq \ini_w(I_A) + \<E-\beta\> \subseteq D
\]
is called the \emph{fake initial ideal of $H_A(\beta)$ with respect to
$w$}. If a monomial function $x^v$, with $v\in\CC^n$ is a solution of $\fin_w(H_A(\beta))$,
then $v$ is a \emph{fake exponent of $H_A(\beta)$ with respect to $w$}. 

If $w$ is a generic weight vector, $\preceq_w$ can be extended 
to $D$ via $x^u \del^\nu \preceq_w x^{u'}\del^{\nu'}$ whenever 
$-w\cdot u + w \cdot \nu \leq -w \cdot u'+w \cdot \nu'$. The \emph{initial ideal of $H_A(\beta)$ with respect to $w$}, denoted $\ini_{(-w,w)}H_A(\beta)$, is 
defined using this ordering, and it clearly contains 
$\fin_w(H_A(\beta))$.
The \emph{exponents} of $H_A(\beta)$ arise from the monomial solutions of $\ini_{(-w,w)}H_A(\beta)$, and therefore all exponents of $H_A(\beta)$ are fake exponents.

Let $I\subseteq \CC[\del]$ be a monomial ideal. Then $\del^r$ is a
\emph{standard monomial} of $I$ if $\del^r\notin I$.  
A pair $(r,\sigma)$, where $r\in\NN^n$ and $\sigma\subseteq
\{1,\dots,n\}$, is a \emph{standard pair} of $I$ if  
\begin{enumerate}
\vspace{-2mm}
\item $r_j = 0$ for $j\in\sigma$,
\item for each $u\in\NN^n$ with $u_j = 0$ for $j\notin\sigma$, the monomial $\del^r\del^u$ is a standard monomial of $I$,
\item for each $\ell\notin\sigma$, there exists $u\in\NN^n$ with $u_j = 0$ for $j\notin\sigma\cup\{\ell\}$ such that $\del^r\del^u\in I$.
\end{enumerate}
\vspace{-2mm}

By~\cite[Lemma~4.1.3]{SST}, fake exponents can be easily computed from
standard pairs. More explicitly, for each standard pair $(r,\sigma)$
of $\ini_w(I_A)$, if there exists $v \in \CC^n$ 
such that $Av =\beta$ and $v_i = r_i$ for $i \notin \sigma$ (such a
$v$ is unique if it exists), then $v$
is a fake exponent of $H_A(\beta)$ with respect to $w$, and all such
fake exponents arise this way.

Now fix $w$ representing the reverse lexicographic ordering 
$\del_1\prec\del_n\prec\del_{n-1}\prec\cdots\prec\del_3\prec\del_2$
for $I_A$. 
Then the standard pairs of $\ini_w(I_A)$ in this case 
are of the form $(r,\{1,n\})$ (there are $\vol(A) = k$ of these
\emph{top-dimensional} standard pairs)  
and $(r,\{1\})$. See~\cite[Section~4.2]{SST} for more details.

\begin{lemma}
\label{lemma:noIntegerDifferences}
Let $\beta \in \CC^2$ and let $r, r'\in\NN^n$ be such that $(r,\{1,n\})$ and $(r',\{1,n\})$ are top-dimensional standard pairs of $\ini_w(I_A)$. 
Denote by $v(\beta)$ and $v'(\beta)$
the fake exponents of $H_A(\beta)$ associated to $(r,\{1,n\})$ and $(r',\{1,n\})$, respectively. Then $v'(\beta) - v(\beta) \notin \ZZ^n$.
\end{lemma}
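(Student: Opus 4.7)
The plan is a proof by contradiction that extracts a monomial in $\ini_w(I_A)$ violating the standard-pair property of $(r,\{1,n\})$. First I would make the fake exponents explicit: since $r_1 = r_n = 0$ and $v(\beta)_i = r_i$ for $i \in \{2,\dots,n-1\}$, the two equations $Av(\beta) = \beta$ solve for $v(\beta)_n = (\beta_2 - \sum_{i=2}^{n-1} k_i r_i)/k$ and $v(\beta)_1 = \beta_1 - \sum_{i=2}^{n-1} r_i - v(\beta)_n$, and analogously for $v'(\beta)$. Setting $u := v(\beta) - v'(\beta) \in \ker_{\CC} A$, I get $u_i = r_i - r'_i \in \ZZ$ for $i \in \{2,\dots,n-1\}$, while $u_n = -(\sum_{i=2}^{n-1} k_i u_i)/k$ and $u_1 = -\sum_{i=2}^{n-1} u_i - u_n$. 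Consequently $u \in \ZZ^n$ if and only if $k \mid \sum_{i=2}^{n-1} k_i u_i$, in which case integrality of $u_1$ is automatic.

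Assume toward contradiction that $u \in \ZZ^n$. Since $(r,\{1,n\})$ and $(r',\{1,n\})$ are distinct standard pairs, $r \neq r'$ and hence $u \neq 0$, so $u$ is a nonzero element of $\ker_{\ZZ} A$. Decompose $u = u_+ - u_-$ into its positive and negative parts. The first row of $A$ being $(1,\dots,1)$ forces $\ker_{\ZZ} A \cap \NN^n = \{0\}$, so both $u_+$ and $u_-$ must be nonzero. From $Au_+ = Au_-$ I obtain the binomial $\del^{u_+} - \del^{u_-} \in I_A$, and after possibly swapping the labels $v,v'$ I may assume $\del^{u_+} \succ_w \del^{u_-}$, whence $\del^{u_+} \in \ini_w(I_A)$.

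To finish, I compare with the standard-pair structure. For $i \in \{2,\dots,n-1\}$, the bound $(u_+)_i = \max(r_i - r'_i, 0) \le r_i$ shows that $\del^{u_+}$ divides $\del_1^{(u_+)_1} \del^r \del_n^{(u_+)_n}$; since $\ini_w(I_A)$ is a monomial ideal, this latter monomial must also lie in $\ini_w(I_A)$. However, $(r, \{1,n\})$ being a standard pair means $\del^r \del_1^a \del_n^b$ is a standard monomial of $\ini_w(I_A)$ for every $a, b \in \NN$, giving the desired contradiction. The main obstacle is really the bookkeeping in the second paragraph: one must be careful both about the sign/support decomposition of $u$ (where the all-ones first row of $A$ is essential to rule out $u_\pm = 0$) and about which of $v, v'$ plays which role, so that the extracted monomial $\del^{u_+}$ yields a genuine violation of the standard-pair property.
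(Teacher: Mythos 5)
Your proof is correct, but it reaches the contradiction by a genuinely different route than the paper. Both arguments begin identically: assume the difference $u$ is an integer vector, note $u\in\ker_\ZZ A$, form the binomial $\del^{u_+}-\del^{u_-}\in I_A$, and arrange (swapping roles if necessary) that $\del^{u_+}\in\ini_w(I_A)$. From there the paper argues analytically: it uses the fact that $x^{v(\beta)}$ is annihilated by $\fin_w(H_A(\beta))$ to deduce $\del^{u_+}\bullet x^{v'(\beta)+u_+}=0$, concludes that $v'(\beta)$ has a strictly negative integer coordinate (necessarily in position $1$ or $n$), and then needs an extra perturbation step --- replacing $\beta$ by $\beta+A\nu$ with $\nu$ supported on $\{1,n\}$ so that those coordinates become non-integral --- to turn this into a contradiction. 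You instead stay entirely combinatorial: since $(u_+)_i=\max(r_i-r_i',0)\le r_i$ for $i\in\{2,\dots,n-1\}$, the monomial $\del^{u_+}$ divides $\del_1^{(u_+)_1}\del^r\del_n^{(u_+)_n}$, which therefore lies in the monomial ideal $\ini_w(I_A)$, directly contradicting condition (2) in the definition of the standard pair $(r,\{1,n\})$. This avoids the perturbation trick altogether and makes transparent that the conclusion is independent of $\beta$. Two small points to tidy: the strict inequality $\del^{u_+}\succ_w\del^{u_-}$ need not hold when $w\cdot u_+=w\cdot u_-$, but then the initial form is the whole binomial and both monomials lie in the monomial ideal $\ini_w(I_A)$, so your conclusion stands (the paper flags this same case parenthetically); and, like the paper, you must implicitly assume $r\neq r'$, since otherwise $u=0\in\ZZ^n$ and the statement as literally written fails --- your appeal to $\ker_\ZZ A\cap\NN^n=\{0\}$ to force $u_+\neq 0\neq u_-$ is then exactly what is needed.
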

\begin{proof}
By contradiction, assume that  $v'(\beta)-v(\beta) = u \in \ZZ^n$. In particular,
$Au=0$, and therefore $\del^{u_+}-\del^{u_-} \in I_A$. The initial form
$\ini_w(\del^{u_+}-\del^{u_-} )$ belongs to the monomial ideal
$\ini_w(I_A)$, so we may assume that $\del^{u_+} \in
\ini_w(I_A)$. (If $\del^{u_-}$ is the lead term, then interchange the
  roles of $v'(\beta)$ and $v(\beta)$; if $w\cdot u = 0$, then both
  monomials must belong to the initial ideal.)

Note that $v'(\beta) + u_+ = v(\beta) + u_-$. Since the supports of $u_+$ and
$u_-$ are disjoint, $\del^{u_+} x^{v(\beta) + u_-} = x^{u_-}
\del^{u_+} x^{v(\beta)} = 0$, where the last equality follows from the fact that $x^{v(\beta)}$
is annihilated by all elements of $\fin_w(H_A(\beta))$. 
But then $\del^{u_+} x^{v'(\beta) + u_+} = 0$, 
so $v'(\beta)$ must have a strictly negative integer coordinate. 
By construction, all coordinates of $v'(\beta)$ indexed by $\{2,\dots,n-1\}$ are nonnegative integers. This implies that the only possible negative integer coordinates for $v'(\beta)$ are those indexed by $\{1,n\}$. 

Choose $\nu \in \RR^n$ with $\nu_i = 0$ for $i\neq 1,n$ such that the coordinates of $v'(\beta)+\nu$ indexed by $\{1,n\}$ are not integers. 
Then $v(\beta)+\nu \eqdef v(\beta+A\nu)$ and $v'(\beta)+\nu \eqdef
v'(\beta+A\nu)$ are the fake exponents of $H_A(\beta + A\nu)$ corresponding
to $(r,\{1,n\})$ and $(r',\{1,n\})$ respectively. Moreover $v'(\beta+A\nu) -
v(\beta+A\nu) = u$, so we may apply the previous argument to 
$v'(\beta+A\nu)$ and $v(\beta+A\nu)$ to conclude that $v'(\beta+A\nu)$
has a negative integer coordinate, a contradiction.
\end{proof}

The following auxiliary result is used to study the coefficients of a
hypergeometric series.
We use the Pochhammer notation $(\xi)_\ell \defeq \xi(\xi+1)\cdots (\xi+\ell-1)$ for
the $\ell$th ascending factorial of $\xi \in \CC$, and include a well-known result which will be useful to us.

\begin{lemma}
\label{lemma:coeffPochhammer}
The function $(\xi)_\ell$ for $\xi\in\CC$ is a polynomial in $\xi$, 
all of whose coefficients are nonnegative integers less than or 
equal to $\ell!$.
\qedhere
\end{lemma}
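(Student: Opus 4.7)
The plan is to expand $(\xi)_\ell$ as a polynomial in $\xi$ and then exploit the fact that all of its coefficients have the same sign together with a single well-chosen specialization.

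First I would proceed by induction on $\ell$ to show that $(\xi)_\ell \in \NN[\xi]$. The base cases $\ell=0$ (where $(\xi)_0 = 1$) and $\ell = 1$ (where $(\xi)_1 = \xi$) are immediate. For the inductive step, the identity
\[
(\xi)_{\ell+1} = (\xi)_\ell \cdot (\xi + \ell)
\]
shows that if $(\xi)_\ell = \sum_{k=0}^\ell c_{\ell,k}\, \xi^k$ with $c_{\ell,k} \in \NN$, then the coefficients of $(\xi)_{\ell+1}$ are of the form $c_{\ell,k-1} + \ell\, c_{\ell,k}$, which are again nonnegative integers. (One could alternatively identify $c_{\ell,k}$ with the unsigned Stirling numbers of the first kind, but the induction is entirely self-contained and suffices.)

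Next, to obtain the uniform bound $c_{\ell,k} \leq \ell!$, I would specialize $\xi = 1$. Since $(1)_\ell = 1\cdot 2 \cdots \ell = \ell!$, we get
\[
\sum_{k=0}^\ell c_{\ell,k} = \ell!.
\]
Because every $c_{\ell,k}$ is a nonnegative integer, each individual coefficient is bounded above by the total sum, yielding $c_{\ell,k} \leq \ell!$.

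There is no real obstacle here: the statement is essentially a direct consequence of the two observations above. The only care required is to make sure the induction is written in a way that makes the nonnegativity of each coefficient manifest before one invokes the specialization at $\xi = 1$ to get the uniform upper bound.
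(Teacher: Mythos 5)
Your proof is correct and complete: the induction via $(\xi)_{\ell+1} = (\xi)_\ell(\xi+\ell)$ gives nonnegative integer coefficients, and evaluating at $\xi=1$ gives $(1)_\ell = \ell!$ as the sum of those coefficients, so each one is bounded by $\ell!$. The paper states this lemma as a well-known fact and supplies no proof of its own, so there is no alternative argument to compare against; your write-up is exactly the standard justification one would include.
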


Let $(r,\{1,n\})$ be a top-dimensional standard pair of $\ini_w(I_A)$,
where $w$ is a generic weight vector representing the reverse
lexicographic term ordering for $I_A$ as before. Then
$v=(\xi,r_2,\dots,r_{n-1},\zeta)$ is
the fake exponent of $H_A(Av)$ corresponding to $(r,\{1,n\})$. 

Assume that $\xi,\zeta \notin \ZZ$. Then we can write the logarithm-free hypergeometric series 
\begin{equation}
\label{eqn:logFreeSeries}
\varphi_v(x,\xi,\zeta) \ \defeq \ 
\sum \frac{\prod_{u_i<0} \prod_{j=1}^{|u_i|} (v_i-j+1)}{\prod_{u_i>0}
  \prod_{j=1}^{u_i} (v_i+j)} x^{v+u},
\end{equation}
where the sum is over $u \in \ker_\ZZ(A)$ such that $u_i+r_i \geq 0$
for $i=2,\dots,n-1$. For each fixed $\xi,\zeta \notin \ZZ$, the
series~\eqref{eqn:logFreeSeries} is a solution of $H_A(Av)$
by~\cite[Theorem~3.4.14]{SST}. 

By Lemma~\ref{lemma:noIntegerDifferences}, series arising
from different top-dimensional standard pairs of $\ini_w(I_A)$ have disjoint supports and are therefore linearly independent.

\begin{theorem}
\label{thm:analyticSeries}
Assume that $A$ is of the form~\eqref{eqn:A}.  
Let $w\in\RR^n$ represent the reverse lexicographic ordering for $I_A$ as above. 
Fix a top-dimensional standard pair $(r,\{1,n\})$ of 
$\ini_w(I_A)$, and let $\scrU$ be a bounded open
subset of $(\CC\minus \ZZ)^2$. 
Then there exists an open set 
$W \subseteq \CC^n$ such that, for each 
$x = (x_1,\dots,x_n) \in W$, the series $\varphi_v(x,\xi,\zeta)$ of~\eqref{eqn:logFreeSeries} is an analytic function on $\scrU$.
\end{theorem}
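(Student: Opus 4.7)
The plan is to show that for an appropriately chosen open set $W\subseteq\CC^n$, the series $\varphi_v(x,\xi,\zeta)$ converges uniformly in $(\xi,\zeta)$ on every compact subset $K\subseteq\scrU$; then by Weierstrass's theorem on uniform limits of holomorphic functions, $\varphi_v(x,\xi,\zeta)$ will be analytic on $\scrU$ for each $x\in W$. To begin, each summand in~\eqref{eqn:logFreeSeries} is, as a function of $(\xi,\zeta)$, a rational function whose denominator is a product of linear forms of the form $(\xi+j)$ or $(\zeta+j)$ for positive integers $j$: indeed, the only coordinates of $v=(\xi,r_2,\dots,r_{n-1},\zeta)$ depending on parameters are $v_1=\xi$ and $v_n=\zeta$. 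Since $\scrU\subseteq(\CC\minus\ZZ)^2$, no such denominator vanishes on $\scrU$, so each individual summand is analytic there.

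Next, I would fix a compact $K\subseteq\scrU$, extract constants $M,\epsilon>0$ such that $|\xi|,|\zeta|\le M$ and $|\xi-m|,|\zeta-m|\ge\epsilon$ for all $(\xi,\zeta)\in K$ and all $m\in\ZZ$, and produce a uniform estimate on the modulus of the coefficient of $x^{v+u}$ for each $u\in\ker_\ZZ(A)$ in the summation. For $i\in\{2,\dots,n-1\}$, the constraint $u_i+r_i\ge 0$ forces $|u_i|\le r_i$ when $u_i<0$, so the numerator falling-factorial $\prod_{j=1}^{|u_i|}(r_i-j+1)$ is a positive integer at most $r_i!$, while the denominator rising-factorial $\prod_{j=1}^{u_i}(r_i+j)$ (for $u_i>0$) is at least $u_i!$. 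For $i\in\{1,n\}$, Lemma~\ref{lemma:coeffPochhammer} together with $|\xi|,|\zeta|\le M$ bounds the numerator Pochhammer-type factors by $C^{|u_i|}|u_i|!$, while $|\xi+j|\ge\epsilon$ for $j\le M$ and $|\xi+j|\ge j-M$ for $j>M$ bound the denominator factors below by a quantity comparable to $\epsilon^{M+1}(u_i-M-1)!$. Combining these produces an upper bound on the coefficient of the form $C_0^{|u_1|+\cdots+|u_n|}$ times a product of factorial ratios, where $C_0$ depends only on $K$.

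The choice of $W$ is dictated by the structure of $\ker_\ZZ(A)$. Since $u\in\ker_\ZZ(A)$ satisfies both $\sum_i u_i=0$ and $\sum_i k_iu_i=0$, the quantities $|u_1|$ and $|u_n|$ grow at most linearly in $|u_2|+\cdots+|u_{n-1}|$. Mirroring the set $V$ in Theorem~\ref{thm:EMindep}, I would take $W$ to be a set where $\max(|x_2|,\dots,|x_{n-1}|)$ is sufficiently small relative to $\min(|x_1|,|x_n|)$, so that the geometric decay of $|x^u|$ absorbs the factorial growth in the coefficient bound and produces a convergent dominating series, uniformly in $(\xi,\zeta)\in K$. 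The Weierstrass theorem then delivers the analyticity. The main obstacle is the uniform coefficient estimate: the Pochhammer-like factors in $\xi$ and $\zeta$ produce factorial growth that must be carefully balanced against the combinatorial structure of $\ker_\ZZ(A)$ and the size of the polydisk defining $W$. The argument closely parallels the standard proofs of convergence of $\Gamma$-series~\cite[Theorem~2.5.16]{SST}, with the crucial additional point that all estimates can be made uniform in the parameters on compact subsets of $\scrU$.
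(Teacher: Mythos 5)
Your proposal is correct and takes essentially the same route as the paper's proof: a term-by-term domination of the series, uniform for $(\xi,\zeta)$ in a bounded set avoiding $\ZZ^2$, on a region where $|x_1|,|x_n|$ are large and $|x_2|,\dots,|x_{n-1}|$ are small, using Lemma~\ref{lemma:coeffPochhammer} and the linear relations defining $\ker_\ZZ(A)$ (the paper packages these relations into an auxiliary matrix $B$ and separates off finitely many exceptional terms, but the estimates are the same). One phrase should be tightened: geometric decay of $|x^u|$ cannot by itself absorb factorial growth; the point (and the crux of the paper's computation) is that for the non-exceptional terms $\sum_i u_i=0$ forces $|u_1|+|u_n|=u_2+\cdots+u_{n-1}$ up to a bounded shift, so the numerator growth $|u_1|!\,|u_n|!\le(|u_1|+|u_n|)!$ is cancelled by the denominator factorials $\prod_{i=2}^{n-1}u_i!$ up to a multinomial coefficient bounded by $(n-2)^{u_2+\cdots+u_{n-1}}$, after which only geometric growth remains for $|x^u|$ to absorb --- mere linear comparability of $|u_1|+|u_n|$ with $\sum_{i=2}^{n-1}|u_i|$, as you state it, would not suffice.
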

\begin{proof}
In order to work with $\varphi_v(x,\xi,\zeta)$, we must
understand its summation range, which is related to $\ker_\ZZ(A)$. 
For each $2 \leq i \leq n-1$, there exist relatively prime positive integers 
$b_{1,i},b_{i,i},b_{n,i}$ such that 
$b_{1,i} + b_{n,i} = b_{i,i}$ and
$k_i b_{i,i} = k b_{n,i}$. Equivalently, 
$\del_i^{b_{i,i}} - \del_1^{b_{1,i}}\del_n^{b_{n,i}} 
\in I_A$. Form an $n\times(n-2)$ matrix $B$, 
with column indices $2,\dots,n-1$ for notational
convenience, whose $i$th column $B_i$ is the vector 
with first coordinate $-b_{1,i}$, $i$th coordinate equal to $b_{i,i}$, $n$th coordinate
equal to $-b_{n,i}$, and all other coordinates equal to zero. 

The lattice $\ZZ$-spanned by the columns of $B$ is contained in $\ker_\ZZ(A)$, but this containment may be strict; either way, $\ker_\ZZ(A)$ is a full rank sublattice of
$\bigoplus_{i=2}^{n-1} (\frac{1}{b_{i,i}})\ZZ \cdot B_i$.
Thus, the index set of $\varphi_v(x,\xi,\zeta)$ is in
bijection with
\begin{equation}
\label{eqn:summationIndex}
\left\{ (\ell_2,\dots,\ell_{n-1}) \in
\bigoplus_{i=2}^{n-1} \left(\frac{1}{b_{i,i}}\right)\ZZ \cdot B_i \ \Bigg|\ B 
\left[\begin{smallmatrix} \ell_2 \\ \vdots \\ \ell_{n-1} \end{smallmatrix}\right]
\in \ZZ^n \text{ and } \ell_i \geq -
\frac{r_i}{b_{i,i}} \text{ for } i \in\{2,\dots,n-1\} 
\right\}. 
\end{equation}

Note that the initial term of the binomial $\del_i^{b_{i,i}} -
\del_1^{b_{1,i}}\del_n^{b_{n,i}}$ is $\del_i^{b_{i,i}}$. Therefore
$\del_i^{b_{i,i}} \prod_{j=2}^{n-1} x_j^{r_j} =0$, which means that $r_i < b_{i,i}$. 
Thus $r_i/b_{i,i} < 1$. Hence if the columns of $B$ span $\ker_\ZZ(A)$ over $\ZZ$, then the
set~\eqref{eqn:summationIndex} equals $\NN^{n-2}$.

If the columns of $B$ do not $\ZZ$-span $\ker_\ZZ(A)$, then some of the indices $\ell_i$ in~\eqref{eqn:summationIndex} might be negative. 
However, since $b_{1,2},\dots,b_{1,n-1} > 0$, there are
only finitely many tuples $(\ell_2,\dots,\ell_{n-1}) \in
\bigoplus_{i=2}^{n-1} (1/b_{i,i})\ZZ \cdot B_i$ that are coordinatewise 
greater than or equal to 
$(-r_2/b_{2,2},\dots,-r_{n-1}/b_{n-1,n-1})$ 
and such that 
$b_{1,2}\ell_2+\cdots+b_{1,n-1} \ell_{n-1} < 0$. 
Similarly, there are only finitely many tuples 
$(\ell_2,\dots,\ell_{n-1})$ as above with 
$b_{n,2}\ell_2+\cdots+b_{n,n-1}\ell_{n-1} < 0$. 
These correspond to finitely many summands of $\varphi_v(x,\xi,\zeta)$, which we may separate out. If such summands exist, they lead to simple poles of $\varphi_v(x,\xi,\zeta)$ along some integer values of
$\xi$ or $\zeta$, which are outside the domain $\scrU$. 

Now consider the subseries of $\varphi_v(x,\xi,\zeta)$ indexed by the tuples $(\bar{\ell}_i)_{i \in \sigma}$ such that 
if $\sigma \subseteq\{2,\dots,n-1\}$ and 
$(\bar{\ell}_i)_{i \in \sigma}$ are such that
every tuple $(\ell_2,\dots,\ell_{n-1})$ in~\eqref{eqn:summationIndex}
satisfying $\ell_i = \bar{\ell}_i$ for $i \in \sigma$ also
satisfies $\ell_j \geq 0$ for all $j \notin \sigma$. 
 
If $\sigma = \varnothing$, the subseries
of $\varphi_v(x,\xi,\zeta)$ arising in this way has terms indexed by
the vectors corresponding to nonnegative tuples $\ell =
(\ell_2,\dots,\ell_{n-1})$. In any subseries corresponding to a
nonempty subset of $\{2,\dots,n-1\}$, the indices of summation are also 
nonnegative, but they live in a lower-dimensional cone. 
These subseries are easier to treat than the case $\sigma=\varnothing$ because there are more factorials in their denominators than in their numerators. 
Therefore it is enough to show that the subseries of $\varphi_v(x,\xi,\zeta)$ corresponding to
$\sigma=\varnothing$ is an analytic function of $\xi$ and $\zeta$. 
For notational convenience, and now without loss of generality, we assume
that the columns of $B$ span $\ker_\ZZ(A)$ over $\ZZ$, in which case
the subseries of $\varphi_v(x,\xi,\zeta)$ corresponding to $\sigma=\varnothing$ is in fact all
of $\varphi_v(x,\xi,\zeta)$, and it can be rewritten as 
\begin{align*}
\sum_{\ell_2,\dots,\ell_{n-1} = 0}^\infty
\frac{(-1)^{B^{(1)}\cdot\ell}
  (-\xi)_{B^{(1)}\cdot\ell}
  (-1)^{B^{(n)}\cdot\ell} (-\zeta)_{B^{(n)}\cdot\ell} }{
(r_2+1)_{b_{2,2}\ell_2} \cdots (r_{n-1}+1)_{b_{n-1,n-1}\ell_{n-1} }} 
\cdot \frac{x_2^{b_{2,2}\ell_2}\cdots x_{n-1}^{b_{n-1,n-1}\ell_{n-1}}}{x_1^{B^{(1)}\cdot\ell} x_n^{B^{(n)}\cdot\ell}}\cdot x^v
\end{align*}
where, by abuse of notation, $B^{(i)}\cdot\ell \defeq b_{i,2}\ell_2+\cdots+b_{i,n-1}\ell_{n-1}$.

Now take term-by-term absolute values with the assumption that $|x_1|,|x_n|>1$; also, since
dividing by a constant does not affect convergence, divide by $r_2!\cdots r_{n-1}!$. 
Moreover, as $x^v$ is clearly analytic in $\xi$ and $\zeta$, 
it is now enough to show that
\begin{align}
\label{eqn:ets}
\sum_{\ell_2,\dots,\ell_{n-1} = 0}^\infty 
\frac{(|\xi|)_{B^{(1)}\cdot\ell} (|\zeta|)_{B^{(n)}\cdot\ell}}{(b_{2,2}\ell_2)!\cdots (b_{n-1,n-1} \ell_{n-1})!} 
|x_2|^{b_{2,2}\ell_2} \cdots |x_{n-1}|^{b_{n-1,n-1}\ell_{n-1}} 
\end{align}
is analytic in $|\xi|$ and $|\zeta|$.
To do this, rewrite~\eqref{eqn:ets} as 
\[
\sum_{i,j = 0}^{\infty} \sum_{ \substack{
\ell_2,\dots,\ell_{n-1} \geq \ 0 \text{ such that }\\
b_{1,2}\ell_2+ \cdots+ b_{1,n-1} \ell_{n-1} = \ i \text{ and }\\
b_{n,2}\ell_2 +\cdots + b_{n,n-1}\ell_{n-1} = \ j
}} 
\frac{(|\xi|)_{i} (|\zeta|)_{j} }{(b_{2,2}\ell_2)!
    \cdots (b_{n-1,n-1} \ell_{n-1})!} |x_2|^{b_{2,2}\ell_2} \cdots
  |x_{n-1}|^{b_{n-1,n-1}\ell_{n-1}}.
\]
Since $b_{m,m} = b_{1,m}+b_{n,m}$ for $m\in\{2,\dots,n-1\}$, the previous
series is bounded above term by term by the series 
\begin{align*}
\sum_{i,j = 0}^{\infty} \sum_{ \substack{
\ell_2,\dots,\ell_n \geq \ 0 \text{ such that }\\
b_{1,2}\ell_2+ \cdots+ b_{1,n-1} \ell_{n-1} = \ i \text{ and }\\
b_{n,2}\ell_2 +\cdots + b_{n,n-1}\ell_{n-1} = \ j
}} {
\frac{(|\xi|)_i }{(b_{1,2}\ell_2)! \cdots (b_{1,n-1} \ell_{n-1})!} 
\frac{(|\zeta|)_j}{(b_{n,2}\ell_2)! \cdots (b_{n,n-1} \ell_{n-1})!} } \hspace{2.5cm} \\
\hspace{2.5cm}
\cdot
 |x_2|^{b_{1,2}\ell_2} \cdots  |x_{n-1}|^{b_{1,n-1}\ell_{n-1}} \cdot
|x_2|^{b_{n,2}\ell_2} \cdots  |x_{n-1}|^{b_{n,n-1}\ell_{n-1}},
\end{align*}
which is bounded above by the following series, obtained by adding more terms:
\begin{align*}
\sum_{i,j = 0}^{\infty} 
\sum_{ \substack{
\ell_2,\dots,\ell_{n-1} \geq \ 0\\
\ell_2+ \cdots+ \ell_{n-1} = \ i 
}} 
\sum_{ \substack{
\ell_2',\dots,\ell_{n-1}' \geq \ 0\\
\ell_2'+ \cdots+ \ell_{n-1}' = \ j 
}}
\frac{(|\xi|)_{i}}{\ell_2!\cdots \ell_{n-1}!} 
\frac{(|\zeta|)_{j} }{\ell_2'!\cdots \ell_{n-1}'!} 
|x_2|^{\ell_2}\cdots |x_{n-1}|^{\ell_{n-1}} 
\cdot 
|x_2|^{\ell_2'}\cdots |x_{n-1}|^{\ell_{n-1}'} .
\end{align*}
This is equal to the product of the two series 
\begin{align}
\label{eqn:seriesProduct1}
&\bigg[\sum_{i = 0}^{\infty} 
\sum_{ \substack{
\ell_2,\dots,\ell_{n-1} \geq \ 0\\
\ell_2+ \cdots+ \ell_{n-1} = \ i 
}}
\frac{(|\xi|)_{i}}{\ell_2!\cdots \ell_{n-1}!} 
|x_2|^{\ell_2}\cdots |x_{n-1}|^{\ell_{n-1}}
\bigg]
\\
\label{eqn:seriesProduct2}
\text{and}\quad 
&\bigg[
\sum_{j=0}^\infty
\sum_{ \substack{
\ell_2',\dots,\ell_{n-1}' \geq \ 0\\
\ell_2'+ \cdots+ \ell_{n-1}' = \ j 
}}
\frac{(|\zeta|)_{j} }{\ell_2'!\cdots \ell_{n-1}'!} 
|x_2|^{\ell_2'}\cdots |x_{n-1}|^{\ell_{n-1}'} 
\bigg].
\end{align}
Therefore, each factor of this product can be considered independently. Since they are
of the same form, we concentrate on the first one~\eqref{eqn:seriesProduct1}.
Apply Lemma~\ref{lemma:coeffPochhammer} to compare~\eqref{eqn:seriesProduct1} to
\begin{align*}
\sum_{N=0}^\infty \bigg[
\sum_{i\geq N} 
\sum_{ \substack{
\ell_2,\dots,\ell_{n-1} \geq \ 0\\
\ell_2+ \cdots+ \ell_{n-1} = \ i 
}} 
\frac{  i!}{\ell_2!\cdots \ell_{n-1}!} 
|x_2|^{\ell_2}&\cdots |x_{n-1}|^{\ell_{n-1}}  
\bigg]
|\xi|^N   \\
& = \sum_{N=0}^\infty 
\big[\sum_{i \geq N} (|x_2|+\cdots+|x_{n-1}|)^i \big]
|\xi|^N \\
& = 
\sum_{N=0}^\infty
\frac{(|x_2|+\cdots+|x_{n-1}|)^N }{1-(|x_2|+\cdots+|x_{n-1}|)} |\xi|^N,
\end{align*}
which converges for $\xi$ in a given bounded subset of $\CC \minus \ZZ$ if $|x_2|+\cdots+|x_{n-1}| \ll 1$. 
\end{proof}

\begin{remark}
\label{rmk:convergenceDomain-x}
In Theorem~\ref{thm:analyticSeries}, the domain of convergence, for
$(\xi,\zeta)$ in a bounded open subset of $\CC^2 \minus \ZZ^2$, of
the hypergeometric series under examination is of the form  
\[ 
W = \{x\in\CC^n\mid |x_1|,|x_n|>1,
|x_2|+\cdots+|x_{n-1}| \ll 1\}\subseteq\CC^n\minus\Sigma_A .
\] 
The containment above follows from the fact that $\Sigma_A$ is the
union of the $A$-discriminantal hypersurface and $\Var(x_1x_n)$. 
Since the Euler--Mellin integrals and their parametric
derivatives can be analytically continued to $\CC^n \minus \Sigma_A$,
these functions can be used 
to obtain analytic continuations of hypergeometric series, both in $x$
(in which case we extend to all of $\CC^n \minus \Sigma_A$) and
$(\xi,\zeta)$.
\endrk
\end{remark}

\begin{remark}
\label{rmk:extendingConvergenceDomain}
The proof of Theorem~\ref{thm:analyticSeries} used a reverse lexicographic term order with $\del_1$ 
as the lowest variable. As in the proof 
of~\cite[Theorem~4.2.4]{SST},~\cite[Theorem~2.5.14]{SST} can be used to rewrite the summation set of the series 
$\varphi_v(x,\xi,\zeta)$ from~\eqref{eqn:logFreeSeries} 
using the dual cone of the Gr\"obner cone associated to this term order. From this we conclude that any $
u\in\ker_{\ZZ}(A)$ that indexes a term of 
$\varphi_v(x,\xi,\zeta)$ must satisfy $u_1\leq0$. 
This implies that $\varphi_v(x,\xi,\zeta)$ is locally analytic 
for $(\xi,\zeta) \in \CC \times (\CC \minus \ZZ)$, 
since none of the summands involve denominators that depend on $\xi$.

Theorem~\ref{thm:analyticSeries} could also be proved using a reverse lexicographic term order with $\del_n$ now as the lowest variable (and $\del_1$ as the next lowest). 
The form of the convergence domain in $x$ clearly does not change, so
in this case, we obtain series that are locally
analytic for $(\xi,\zeta) \in (\CC \minus \ZZ) \times \CC$.
\endrk
\end{remark}

We are now prepared to prove Theorems~\ref{thm:nonResonantLinesViaSeries} and~\ref{thm:seriesLinIndep}, which was the goal of this
section.  

\begin{proof}[Proof of Theorem~\ref{thm:nonResonantLinesViaSeries}]
Use the same assumptions and notation as in the proof of
Theorem~\ref{thm:analyticSeries}, and let $\prec$ be the reverse
lexicographic term order satisfying $\del_1\prec \del_n \prec \del_2
\prec \cdots \prec \del_{n-1}$.
Suppose first that $L = \{ \mu + \lambda \fface \mid \lambda \in \CC
\}$, where $\mu \in \ZZ^2$ is fixed. 
If $(r,\{1,n\})$ is a top-dimensional standard pair of
$\ini_\prec(I_A)$, then the corresponding fake exponent of  $H_A(\mu +
\lambda \fface )$ is 
\[
(\lambda + (\mu-Ar)_1-(1/k)(\mu-Ar)_2, r,(1/k)(\mu-Ar)_2),
\] 
where we have abused notation by using $r$ as a
vector in $\NN^n$ whose first and last coordinates are zero and also
as a vector in $\NN^{n-2}$. 
Note that the last coordinate of this fake exponent does not
depend on $\lambda$. 

By the proof of~\cite[Theorem~4.2.4]{SST}, $H_A(\mu + \lambda \fface )$
has at most $\vol(A)+1$ fake exponents: the ones corresponding to the
top-dimensional standard pairs, along with possibly one other arising from
a standard pair of the form $(r',\{1\})$. Such a standard
pair gives rise to a fake exponent of $H_A(\mu + \lambda \fface )$ if
and only if $\mu_2 = (Ar')_2$, a condition which is independent of
$\lambda$. Thus, if there is a fake exponent associated to a non-top-dimensional standard pair for one parameter in $L$, then it is a fake exponent for every parameter in $L$. 
In this case, the last coordinate of this fake exponent is also independent of $\lambda$. 

If the line $L$ is such that all fake exponents of $H_A(\beta)$ correspond to
top-dimensional standard pairs, then they each give rise to a series solution of $H_A(\mu + \lambda \fface )$ for every $\lambda \in \CC$, and
all of these solutions are locally analytic functions of $\lambda$ by
Theorem~\ref{thm:analyticSeries} and
Remark~\ref{rmk:extendingConvergenceDomain}, since the last coordinate
of these fake exponents is independent of
$\lambda$. Remark~\ref{rmk:convergenceDomain-x} provides locally
analytic extensions to a domain in $x$ common to all $\lambda$, namely
$\CC^n\minus \Sigma_A$.
These functions
are linearly independent by~\cite[Lemma~2.5.6(2)]{SST}. 

If the line $L$ is such that there exists a fake exponent
corresponding to a standard pair $(r',\{1\})$, then by the proof
of~\cite[Theorem~4.2.4]{SST}, there exists a top-dimensional standard
pair $(r,\{1,n\})$ such that $ A(r-r')$ is an integer combination of
the first and last columns of $A$. That proof also asserts 
that the fake exponent associated to $(r,\{1,n\})$ does not 
give rise to a series solution of 
$H_A(\mu + \lambda \fface )$ unless 
$\mu + \lambda\fface \in \cE_A$. This means that, if 
$\mu + \lambda\fface
\notin \cE_A$, then the $\vol(A)-1$ top-dimensional standard
pairs different from $(r,\{1,n\})$ induce series solutions of 
$H_A(\mu + \lambda \fface )$.
Moreover, if  $\mu + \lambda\fface
\in \cE_A$, 
every fake exponent gives rise to a solution of the corresponding
hypergeometric system.
Now the same argument as above applied to the series arising from
$(r',\{1\})$ and the $\vol(A)-1$ top-dimensional standard pairs
different from $(r,\{1,n\})$
yields $\vol(A)$ locally analytic functions on $(\CC^n \minus \Sigma_A)
\times L$ that are linearly
independent solutions of $H_A(\beta)$ for $\beta \in L$.

For the case that $L =\{ \mu + \lambda \lface \mid \lambda
\in \CC\}$, where $\mu
\in \ZZ^2$ is fixed, the proof is essentially the same as the previous
one, except that the term order must be
changed. In this case, use $\prec'$, the reverse lexicographic term
order with $\del_n \prec' \del_1 \prec'\del_2 \prec' \cdots \prec'
\del_{n-1}$. This switches the roles of the first and last coordinates
in the proof of the previous case. In other words, the first 
coordinate of each fake exponent is now independent of $\lambda$, and the
condition for the existence of a fake exponent associated to a
standard pair $(r',\{n\})$ is $k(\mu-Ar')_1=(\mu-Ar')_2$, which is also 
independent of $\lambda$. Now Theorem~\ref{thm:analyticSeries}
and Remarks~\ref{rmk:extendingConvergenceDomain}
and~\ref{rmk:convergenceDomain-x} again imply the desired result. 
\end{proof}

\begin{proof}[Proof of Theorem~\ref{thm:seriesLinIndep}]
Let $w\in\RR^n$ represent the reverse lexicographic ordering for $I_A$ as above. 
Let $\bar{\beta} \in \ZZ^2$, and 
consider the fake exponents of $H_A(\bar{\beta})$ corresponding to
top-dimensional standard pairs of $\ini_w(I_A)$. 
By Lemma~\ref{lemma:noIntegerDifferences}, their 
pairwise differences are not integer vectors, which means that there is at most one such fake exponent with all integer coordinates. Thus there exist 
$\vol(A)-1$ fake exponents with at least one noninteger
coordinate. Those coordinates can only be the first and last, since the others are forced to be nonnegative integers by construction. 
However, since $\bar{\beta} \in \ZZ^2$, if either of the first or last coordinates of
a fake exponent of $H_A(\bar{\beta})$ is noninteger, then so is the
other. 

Applying Theorem~\ref{thm:analyticSeries} to each of
the associated $\vol(A)-1$ series implies that they are analytic
functions of $\beta$ in a neighborhood $\cW$ of $\bar{\beta}$, for $x$
in an open subset of $\CC^n \minus \Sigma_A$.
(Recall that $\zeta$ and $\xi$ in Theorem~\ref{thm:analyticSeries} are linear functions of $\beta_1$ and $\beta_2$.)
By Lemma~\ref{lemma:noIntegerDifferences}, series arising from different top-dimensional standard pairs of $H_A(\beta)$ have disjoint supports and are therefore linearly independent.
Moreover, the fact that the supports of these series are disjoint implies that no linear combination of them can have any cancelled terms. In particular, such functions cannot contain a finitely supported series in their span.

Now Remark~\ref{rmk:convergenceDomain-x} provides analytic
continuations of these series to $x$ in $\CC^n \minus \Sigma_A$.
\end{proof}

\begin{remark}
\label{rmk:seriesObstacles}
The proofs in this section rely very strongly on the fact that $A$ is
of the form~\eqref{eqn:A}, especially on the fine control over the standard pairs of
reverse lexicographic initial ideals of $I_A$ that is available in
this case. In general, the potential existence of logarithmic series
solutions of $H_A(\beta)$ makes it much more difficult to decide whether a
fake exponent of $H_A(\beta)$ is a true exponent.
\endrk
\end{remark}

\section{Parametric behavior of \texorpdfstring{$A$}{A}-hypergeometric functions}
\label{sec:sheafProof}

In this section, we combine the results in the article to prove
Theorems~\ref{thm:main} and~\ref{thm:bigSheaf} and then discuss how hypergeometric solutions at
rank jumps are connected to local cohomology. We also include an example to illustrate these ideas.

\begin{proof}[Proof of Theorem~\ref{thm:main}]
The existence of $\vol(A)$ linearly independent functions that are
locally analytic for
$(x,\beta) \in (\CC^n \minus \Sigma_A) \ \times (\CC^2 \minus \ResArr(A))$ and span the
solution space of $H_A(\beta)$ for any $\beta \in \CC^2 \minus
\ResArr(A)$ can be proved using residue integrals as in~\cite{GKZ90}
or analytic continuations of extended Euler--Mellin integrals as in
Theorem~\ref{thm:EMindep}.

For each line $L$ in $\ResArr(A)$, Theorem~\ref{thm:nonResonantLinesViaSeries}
provides $\vol(A)$ linearly independent solutions of $H_A(\beta)$ that
vary locally analytically
for $\beta \in L$ and span the solution space of $H_A(\beta)$ if
$\beta \notin \cE_A$.
Note that
if $L_1$ and $L_2$ are two resonant lines that meet outside
$\cE_A$, then the bases of solutions provided by 
Theorem~\ref{thm:nonResonantLinesViaSeries} along $L_1$ and $L_2$ are
not necessarily the same when specialized at the intersection point
$L_1 \cap L_2$, but they do span the same solution space. Thus, at
such an intersection, every solution of the corresponding
hypergeometric system can be analytically deformed to a hypergeometric
function both along $L_1$ and along $L_2$.

Now let $\bar{\beta} \in \cE_A$, and let
$L_1$ and $L_2$ be the two (polar) resonant lines whose intersection
is $\bar{\beta}$. By Theorem~\ref{thm:genericResPolar}, for
$i=1,2$, there is a finitely supported solution of $H_A(\beta)$ that
varies locally analytically for $\beta \in L_i$. Further, by
Theorem~\ref{thm:nongenericResPolarEM}, these two finitely supported
series specialize to linearly independent functions at $\bar{\beta}$.
Moreover, Theorem~\ref{thm:seriesLinIndep} provides a neighborhood
$\mathcal{W}$ of $\bar{\beta}$ and $\vol(A)-1$ linearly independent
solutions of $H_A(\beta)$ that are locally analytic for $(x,\beta) \in
(\CC^n \minus \Sigma_A) \times
\mathcal{W}$, whose span never contains those finitely supported series.

Thus we have a set of $\vol(A)$ linearly independent solutions of
$H_A(\beta)$ 
that are locally analytic for $\beta \in L_1 \cap \mathcal{W}$, and another
set of  $\vol(A)$ linearly independent solutions of $H_A(\beta)$
that are locally analytic for $\beta \in L_2 \cap \mathcal{W}$. When we
specialize to $\bar{\beta}$, the union of these two sets spans a 
space of dimension $\vol(A)+1$, which is thus the whole solution space
of $H_A(\bar{\beta})$.

Note that each of the finitely supported series mentioned above can
only be analytically deformed to a solution of $H_A(\beta)$ along one of the lines $L_1$ or $L_2$, since otherwise, 
taking a limit $\beta \to \bar{\beta}$, we would
obtain $\rank(H_A(\bar{\beta})) = \vol(A)$. 
\end{proof}

\begin{proof}[Proof of Theorem~\ref{thm:bigSheaf}]
Consider a reverse lexicographic term order $\prec$ 
with $\del_1$ lowest, followed by $\del_n$. There are finitely many resonant lines
corresponding to $\fface$ such that a non-top-dimensional standard pair of
$\ini_\prec(I_A)$ provides a fake exponent 
of $H_A(\beta)$ with respect to $\prec$ along that line.

Let $L = \{ \mu + \lambda \fface \mid \lambda \in \CC\}$ be such that all
fake exponents of $H_A(\beta)$ with respect to $\prec$ along $L$ arise
from top-dimensional standard pairs, and fix $\bar{\beta} \in L$. Then there exists a small open neighborhood $\mathcal{V} \subseteq \CC^2$ of
$\bar{\beta}$ such that, for all $\beta \in \mathcal{V}$, the
top-dimensional fake exponents of $H_A(\beta)$ with respect to $\prec$ give rise to series
that span the solution space of $H_A(\beta)$ and are analytic on
$\mathcal{V}$ (and the domain of convergence in $x$ can then be extended
to $\CC^n \minus \Sigma_A$ by analytic continuation). Here we have
used Theorem~\ref{thm:analyticSeries} and Remark~\ref{rmk:convergenceDomain-x}.
This allows us to add to $\CC^2\minus \ResArr(A)$ all but
finitely many of the lines that are translates of the span of the face $\left\{\fface\right\}$.

The same argument, using a reverse lexicographic term ordering with
$\del_n$ lowest (followed by $\del_1$), applies to all but finitely
many lines that are resonant with respect to $\left\{\lface\right\}$. 

Let $\sL$ denote the union of the finitely many lines (some resonant
with respect to $\left\{\fface\right\}$, others resonant with respect
to $\left\{\lface\right\}$) not covered by the above arguments. Note
that all of those lines are polar, and that $\sL$ contains all (polar)
resonant lines that meet $\cE_A$. We have shown that the solutions of $H_A(\beta)$ 
vary locally analytically for $(x,\beta) \in (\CC^n \minus \Sigma_A)
\times (\CC^2 \minus \sL)$.

The behavior of the solutions of $H_A(\beta)$ along $(\CC^n \minus
\Sigma_A) \times \sL$ is already controlled by Theorem~\ref{thm:main},
since $\sL \subseteq \ResArr(A)$.
\end{proof}

In the proof of Theorem~\ref{thm:bigSheaf}, the polar
lines that meet at points in $\cE_A$ must lie in $\sL$, but there
might be lines in $\sL$ that do not 
contain any rank-jumping parameters. The most elegant possible statement
regarding variation of solutions with respect to the parameters would
involve also removing these lines from $\sL$, but we 
do not currently have an argument that makes this possible.  

In the proof of Theorem~\ref{thm:main}, we saw how the finitely supported series solutions along resonant lines of an $A$-hypergeometric system interact to form rank
jumps. The connection between such solutions and $\cE_A$ in
the case of curves is not new, as it was shown in~\cite{CDD} that
$\rank(H_A(\beta)) > \vol(A)$ if and only if $H_A(\beta)$ has two
linearly independent Laurent polynomial solutions. 
On the other hand, there is also a strong connection between rank jumps and the graded structure of the local cohomology of $\CC[\del]/I_A$ at the maximal ideal (see~\cite{MMW} for the most general version).

We now make explicit the connection between solutions of $H_A(\beta)$ at rank-jumping parameters $\beta$ and the first local cohomology module of $\CC[\del]/I_A$. The ring $\CC[\del]/I_A$ (and each of its local cohomology modules with respect to the ideal $\<\del\>$) carries a natural grading by $\ZZ^2$, where $\deg(\del_i)$ is given by the $i$th column of $A$. 
For $A$ as in~\eqref{eqn:A}, the local cohomology modules $H_{\<\del\>}^\bullet(\CC[\del]/I_A)$ can be computed from the Ishida complex: 
\begin{equation}
\label{eqn:ishida}
H_{\<\del\>}^\bullet\left(\frac{\CC[\del]}{I_A}\right) = 
H^\bullet\left[\,
0\to\frac{\CC[\del]}{I_A}\stackrel{\ \delta^0}{\longrightarrow}
\left(\frac{\CC[\del]}{I_A}\right)_{\del_1}\oplus\left(\frac{\CC[\del]}{I_A}\right)_{\del_n}
\stackrel{\ \delta^1}{\longrightarrow}
\left(\frac{\CC[\del]}{I_A}\right)_{\del_1\del_n}
\to 0\,
\right], 
\end{equation}
where ${\CC[\del]}/{I_A}$ lies in cohomological degree $0$ (see, for
instance,~\cite[Section~13.3.1]{miller-sturmfels}). 
It thus follows that 
\[
\left\{\alpha\in\ZZ^2\;\bigg\vert\; \left[H_{\<\del\>}^1\left(\frac{\CC[\del]}{I_A}\right)\right]_\alpha\neq 0\,\right\} 
= \cE_A = \left[\left(\ZZ \fface +\NN A\right) \cap
\left(\ZZ \lface +\NN A\right)\right] 
\minus \NN A
\subseteq \RR_{\geq 0}A.
\]

Since $\dim_\CC \left({\CC[\del]}/{I_A}\right)_{\del_i} = 1$ for any $i$, note also that $\dim_\CC [H_{\<\del\>}^1(\CC[\del]/I_A)]_\alpha \leq 1$ for all $\alpha\in\ZZ^2$. 
We show now use Theorem~\ref{thm:nongenericResPolarEM} to recover cocycle generators for the nonzero graded components of $H_{\<\del\>}^1(\CC[\del]/I_A)$. 

\begin{theorem}
\label{thm:localcohom}
Let $\beta\in\cE_A$, and let $\varphi(x)$ and $\varphi'(x)$ respectively denote the finitely supported solutions of $H_A(\beta)$ obtained from \eqref{eqn:SolutionAlongPolar} and \eqref{eqn:SolutionAlongPolar2} in Theorem~\ref{thm:nongenericResPolarEM}. 
Then if $x^v$ and $x^{v'}$ are monomials respectively appearing with nonzero coefficients in those solutions, then 
\[
\left(
\del^v,\del^{v'}
\right)\in \left(\frac{\CC[\del]}{I_A}\right)_{\del_1}\oplus\left(\frac{\CC[\del]}{I_A}\right)_{\del_n}
\]
forms a nonzero cocycle generator of $H_{\<\del\>}^1(\CC[\del]/I_A)$. 
\end{theorem}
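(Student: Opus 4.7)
The plan is to restrict the Ishida complex \eqref{eqn:ishida} to its $\beta$-graded component and verify the cocycle condition and the non-triviality directly at the level of monomials. Write $R \defeq \CC[\del]/I_A$, which inherits a $\ZZ^2$-grading from $\deg(\del_i) = a_i$. First I observe that the Euler operators $E - \beta$ annihilate $\varphi$ and $\varphi'$, so both series are $A$-homogeneous of degree $\beta$; consequently $Av = Av' = \beta$ for every monomial $x^v$ in $\varphi$ and every monomial $x^{v'}$ in $\varphi'$. From the explicit formulas \eqref{eqn:SolutionAlongPolar} and \eqref{eqn:SolutionAlongPolar2} (see also the supports of $\varphi$ and $\varphi'$ identified in the proof of Theorem~\ref{thm:nongenericResPolarEM}), the coordinates of $v$ indexed by $\{2,\dots,n\}$ are nonnegative integers, so $\del^v \in (R_{\del_1})_\beta$; symmetrically $\del^{v'} \in (R_{\del_n})_\beta$.

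Next I will compute the relevant graded components. Because $\beta \in \cE_A$, formula \eqref{eqn:EAforCurves} forces $\beta \notin \NN A$, hence $R_\beta = 0$. The same formula gives $\beta \in \NN A + \ZZ\fface$ and $\beta \in \NN A + \ZZ\lface$, so $(R_{\del_1})_\beta$ and $(R_{\del_n})_\beta$ are nonzero. To show that each of $(R_{\del_1})_\beta$, $(R_{\del_n})_\beta$, and $(R_{\del_1 \del_n})_\beta$ is \emph{one-dimensional}, I will argue that any two monomials $\del^u$ and $\del^{\widetilde u}$ of equal $A$-degree in one of these localizations become, after multiplication by a sufficiently high common power of the localizing variable(s), polynomials in $\CC[\del]$ of the same $A$-degree; the binomial generators $\del^w - \del^{w'}$ of $I_A$ then identify them modulo $I_A$. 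Since $\del_1$ and $\del_n$ are non-zerodivisors in the domain $R$, dividing back yields $\del^u = \del^{\widetilde u}$ in the localization.

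From this the conclusion is immediate. Under the natural maps, $\del^v$ and $\del^{v'}$ land in the one-dimensional $(R_{\del_1 \del_n})_\beta$ as non-zero elements, and by the preceding step they are in fact equal there; hence $\delta^1(\del^v, \del^{v'}) = \del^v - \del^{v'} = 0$, so $(\del^v, \del^{v'})$ is a cocycle. Because $R_\beta = 0$, the differential $\delta^0$ vanishes in degree $\beta$, and the only coboundary is zero. Since $(\del^v, \del^{v'})$ is visibly non-zero in $(R_{\del_1})_\beta \oplus (R_{\del_n})_\beta$, its cohomology class is non-zero; as it lies in $[H^1_{\langle\del\rangle}(R)]_\beta$, which is one-dimensional by the preceding paragraph (or by the remark before the theorem), this class generates that graded piece.

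The only step that is not purely formal is verifying that the images of $\del^v$ and $\del^{v'}$ in $(R_{\del_1 \del_n})_\beta$ agree on the nose rather than merely up to a scalar, and this is where the binomial structure of $I_A$ is essential: its generators have coefficient exactly $1$, and $R$ is a domain, so the clearing-denominators argument yields genuine equality of monomials (not proportionality) in the localization. Beyond this bookkeeping, the rest of the argument is a direct translation between the analytic language of the supports of $\varphi$ and $\varphi'$ described in Section~\ref{sec:EM:nongenericResonant} and the algebraic language of monomials in the localizations appearing in \eqref{eqn:ishida}.
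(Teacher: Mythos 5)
Your proposal is correct and follows essentially the same route as the paper's proof: non-triviality comes from $\beta\notin\NN A$ (so $R_\beta=0$ and $\delta^0$ contributes nothing in degree $\beta$), and the cocycle condition comes from clearing denominators by powers of $\del_1,\del_n$ and invoking the binomials of $I_A$ to identify the resulting equal-$A$-degree monomials. The only cosmetic difference is that you package this as one-dimensionality of the graded pieces of the localizations, while the paper writes down the explicit binomial $\del_1^{-v_1}\del_n^{-v'_n}(\del^v-\del^{v'})\in I_A$ directly; both rest on the same computation.
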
 
\begin{proof}
Since $\beta\in \cE_A = \left[\left(\ZZ \fface +\NN A\right) \cap
\left(\ZZ \lface +\NN A\right)\right] 
\minus \NN A$, 
then neither $v$ nor $v'$ belongs to $\NN^n$. In particular, $(\del^{v},\del^{v'})$ is not in the image of $\delta^0$ in~\eqref{eqn:ishida} because if it were, then $v=v'\in\NN^n$. 

To complete the proof, it is enough to show that 
$(\del^{v},\del^{v'})\in\ker \delta^1$ from~\eqref{eqn:ishida}. 
Note that $Av=Av'=\beta$ and $v,v' \in \ZZ^n$. 
Further, all coordinates of
$v$ are nonnegative except for the first one, and all coordinates of
$v'$ are nonnegative except for the last one. Thus
$A(v-v_1e_1-v'_ne_n) = A(v'-v_1e_1-v'_ne_n)$, where
$e_1$ and $e_n$ are standard basis vectors of $\ZZ^n$, which implies
that $\del_n^{v'_n}\prod_{i=2}^n \del_i^{v_i} -
\del_1^{v_1}\prod_{i=1}^{n-1}\del_i^{v'_i} \in I_A$. Equivalently, $\del^{v}-\del^{v'} \in (I_A)_{\del_1\del_n}$, which yields the desired containment. 
\end{proof}
 
\begin{example}
\label{ex:0134sheaf}
Let us demonstrate the behavior of solutions of $H_A(\beta)$, as described in Theorems~\ref{thm:main} and~\ref{thm:bigSheaf}, when 
$A = \left[\begin{smallmatrix}1&1&1&1\\0&1&3&4\end{smallmatrix}\right]$. 
For the stratum of nonresonant parameters, $\CC^2\minus\ResArr(A)$, 
Theorem~\ref{thm:EMindep} guarantees that there are four distinct
Euler--Mellin integrals that provide a basis of solutions, which vary
locally analytically in $\beta$. 

For a resonant line $L= \mu+\lambda\fface$ for $\mu\in\ZZ^2$, $\lambda\in\CC$, we use a revlex ordering $\prec$ where $\del_1$ is the least variable. In this case, the top-dimensional standard pairs of $\ini_\prec(I_A)$ are $(r,\{1,4\})$, where $r$ is one of 
\begin{align*}
(0,0,0,0), \quad 
(0,1,0,0), \quad 
(0,0,1,0), \quad
\text{or}\quad 
(0,0,2,0),
\end{align*}
and there is one lower dimensional standard pair as well, which is $\left( (0,2,0,0), \{1\} \right)$. 
As long as $\mu_2\neq 2$, then the four top-dimensional standard pairs
provide $\vol(A)$ series solutions that vary locally analytically along $L$ with respective exponents
\begin{align*}
v_0 &\defeq 
\left( \beta_1-\frac{\beta_2}{4},0,0,\frac{\beta_2}{4} \right), \\
v_1 &\defeq
 \left( \beta_1-\frac{\beta_2+3}{4},1,0,\frac{\beta_2-1}{4} \right), \\
v_2 &\defeq
\left( \beta_1-\frac{\beta_2+1}{4},0,1,\frac{\beta_2-3}{4} \right),\quad\text{and} \\
v_3 &\defeq
\left( \beta_1-\frac{\beta_2-2}{4},0,2,\frac{\beta_2-6}{4} \right).
\end{align*}
Along the line given by $\beta_2 = \mu_2 = 2$, then the exponent $v_3$ above is replaced by 
$\left( \beta_1-2,1,0,0 \right)$. 

Similarly, for a resonant line $L= \mu+\lambda\4face$ for $\mu\in\ZZ^2$, $\lambda\in\CC$, we use a revlex ordering $\prec'$ where $\del_n$ is the least variable. In this case, the top-dimensional standard pairs of $\ini_{\prec'}(I_A)$ are $(r,\{1,4\})$, where $r$ is one of 
\begin{align*}
(0,0,0,0), \quad 
(0,1,0,0), \quad 
(0,2,0,0), \quad
\text{or}\quad 
(0,3,0,0),
\end{align*}
and there are three lower dimensional standard pairs as well, which are 
\begin{align*}
\left((0,0,1,0), \{4\} \right), \quad 
\left((0,0,2,0), \{4\} \right), \quad 
\text{and}\quad 
\left((1,0,1,0), \{4\} \right).
\end{align*} 
When $4\mu_1-\mu_2\notin\{1,2,5\}$, then the four top-dimensional
standard pairs provide $\vol(A)$ series solutions that vary locally analytically along $L$ with respective exponents
\begin{align*}
v_0 &\defeq 
\left( \beta_1-\frac{\beta_2}{4},0,0,\frac{\beta_2}{4} \right), \\
v_1 &\defeq
 \left( \beta_1-\frac{\beta_2+3}{4},1,0,\frac{\beta_2-1}{4} \right), \\
v'_2 &\defeq
\left( \beta_1-\frac{\beta_2+6}{4},2,0,\frac{\beta_2-2}{4} \right),\quad\text{and} \\
v'_3 &\defeq
\left( \beta_1-\frac{\beta_2-9}{4},3,0,\frac{\beta_2-3}{4} \right).
\end{align*}
Along the lines given by $4\beta_1-\beta_2 = 4\mu_1-\mu_2$ equal to $1,2,$ or $5$, the respective exponents 
$v'_3, v'_2$, and $v'_3$ above are replaced by  
\[
\left( 0,0,1,\beta_1-1 \right),\quad  
\left( 0,0,2,\beta_1-2 \right), \quad 
\text{and}\quad 
\left( 1,0,1,\beta_1-1 \right).
\] 

For any $\mu \in \ZZ^2$, if $L$ is a line resonant with respect to either
facet that contains $\mu$, the exponent corresponding to the finitely
supported solution along $L$ varies with the value of $\lambda$ modulo
$4$.  

The unique rank-jumping parameter $\beta =
\left[\begin{smallmatrix}1\\2\end{smallmatrix}\right]$ belongs to two
special resonant lines, in the sense that both involved discarding a
(top-dimensional) fake exponent to instead use a lower-dimensional standard pair. However, at this point, the discarded fake exponent from one line agrees with the exponent on the other line coming from the lower-dimensional standard pair. Thus, while the discarded fake exponents are not valid for the whole line, they are indeed valid at any point in $\cE_A$. 

For a resonant parameter $\beta\in L_1\cap L_2$ that belongs the
intersection of two resonant lines but is not rank-jumping, the story
is simpler, as was explained in the proof of Theorem~\ref{thm:main}.

In summary, these computations show that for this example, the $\sL$
in Theorem~\ref{thm:bigSheaf} is the union of four polar lines. This
set is depicted in Figure~\ref{fig:0134nontopLines}. 

\begin{figure}[t]
\includegraphics[width=60mm]{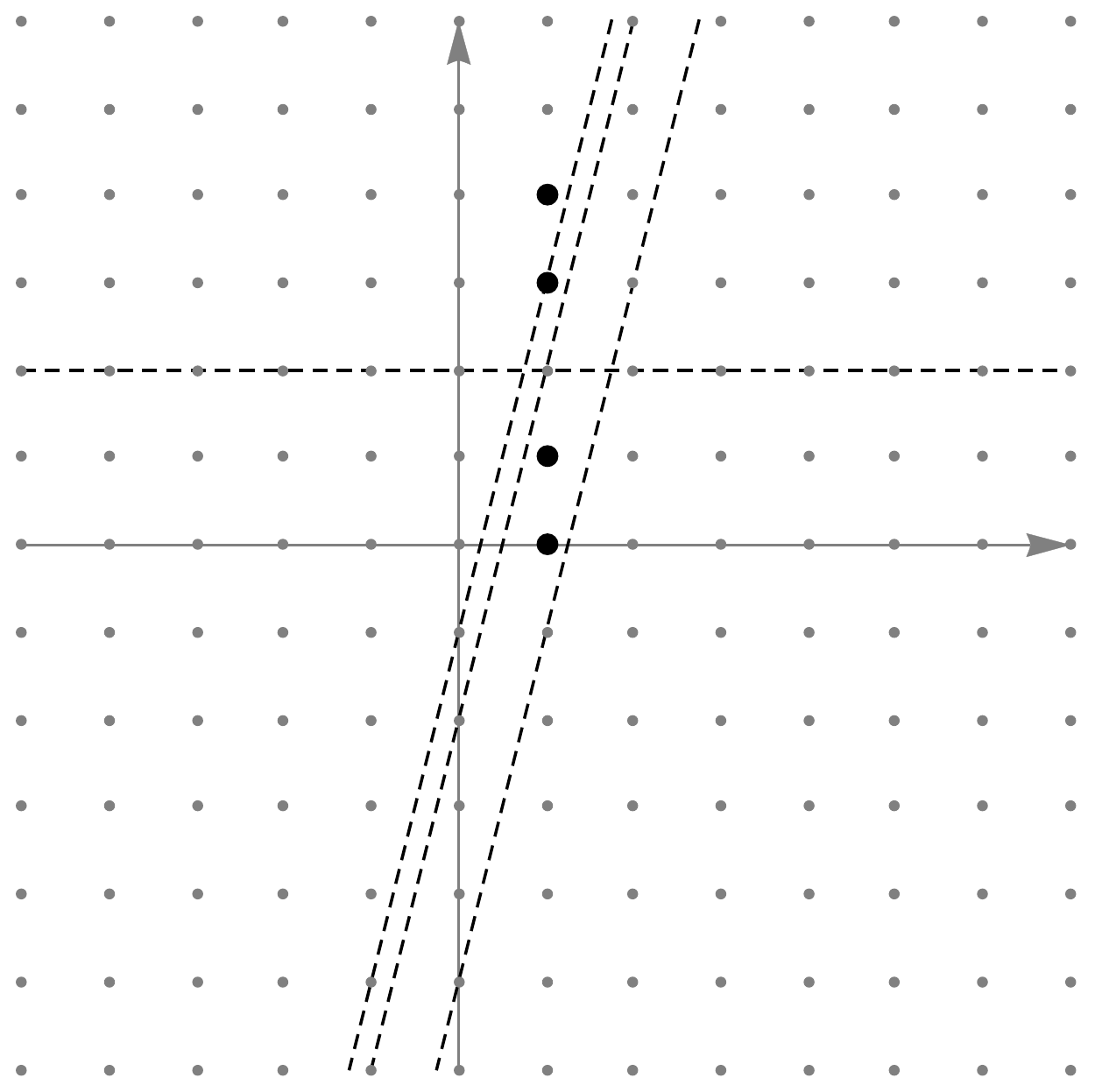}
\caption{The line arrangement $\sL$ given by the proof of Theorem~\ref{thm:bigSheaf} for $A =$\usebox{\smlMat}.
}
\label{fig:0134nontopLines}
\end{figure}

To improve the situation even more, there is another term order $\prec''$, with $\del_4$ lowest, which gives an initial ideal $\ini_{\prec''}(I_A)$ with top-dimensional standard pairs $(r,\{1,4\})$
where $r$ equals
\[
(0,0,0,0), \quad 
(0,1,0,0), \quad 
(0,2,0,0), \quad 
\text{or}\quad 
(0,0,1,0), 
\]
and one additional standard pair $\left( (0,0,2,0) ,\{4\}\right)$. Thus, when it comes to polar lines corresponding to $\left\{\lface\right\}$, it is only necessary to include the line $4\beta_1-\beta_2 = 2$. Using the methods in Theorem~\ref{thm:bigSheaf}, the line arrangement $\sL$ can thus be replaced by $\sL'$, which is the two lines $ \{ \beta \mid \beta_2=2\}$ and $\{ \beta \mid
4\beta_1-\beta_2 = 2 \}$ that meet at the rank-jumping parameter $\left[\begin{smallmatrix}1\\2\end{smallmatrix}\right]$. 
\endrk
\end{example}

\raggedbottom
\providecommand{\MR}{\relax\ifhmode\unskip\space\fi MR }
\providecommand{\MRhref}[2]{%
  \href{http://www.ams.org/mathscinet-getitem?mr=#1}{#2}
}
\providecommand{\href}[2]{#2}
\end{document}